\pgfplotsset{compat=1.5}
\newtheorem{theorem}{Theorem}
\newtheorem{definition}{Definition}
\newtheorem{proposition}{Proposition}
\theoremstyle{remark}
\newcommand{\x}{\ensuremath{\mathbf{x}}}
\newcommand{\ub}{\ensuremath{\mathbf{u}}}
\newcommand{\y}{\ensuremath{\mathbf{y}}}
\newcommand{\z}{\ensuremath{\mathbf{z}}}
\newcommand{\X}{\ensuremath{\mathbf{X}}}
\newcommand{\Z}{\ensuremath{\mathbf{Z}}}
\newcommand{\R}{\ensuremath{\mathbb{R}}}
\DeclareMathOperator*{\argmin}{\arg\!\min}
\newcolumntype{C}[1]{>{\centering\arraybackslash}m{#1}}
\definecolor{Gray}{gray}{0.9}
\newcommand{\RA}[1]{{\color{black}#1}}
\newcommand{\RB}[1]{{\color{black}#1}}
\newcommand{\RC}[1]{{\color{black}#1}}
\begin{document}

\title{Kernel-based Active Subspaces with application to CFD problems
  using Discontinuous Galerkin method} 

\author[]{Francesco~Romor\footnote{francesco.romor@sissa.it}}
\author[]{Marco~Tezzele\footnote{marco.tezzele@sissa.it}}
\author[]{Andrea~Lario\footnote{andrea.lario@sissa.it}}
\author[]{Gianluigi~Rozza\footnote{gianluigi.rozza@sissa.it}}

\affil{Mathematics Area, mathLab, SISSA, via Bonomea 265, I-34136
  Trieste, Italy}

\maketitle

\begin{abstract}
Nonlinear extensions to the active subspaces method have brought remarkable
results for dimension reduction in the parameter space and response
surface design. We further develop a kernel-based nonlinear method. In particular
we introduce it in a broader mathematical framework that contemplates also the
reduction in parameter space of multivariate objective functions. The
implementation is thoroughly discussed and tested on more challenging
benchmarks than the ones already present in the literature, for which dimension
reduction with active subspaces produces already good results. Finally, we show
a whole pipeline for the design of response surfaces with the new methodology in
the context of a parametric CFD application solved with the Discontinuous
Galerkin method.
\end{abstract}

\tableofcontents
%\listoffixmes

\section{Introduction}
\label{sec:intro}
Nowadays, in many industrial settings the simulation of complex
systems requires a huge amount of computational power. Problems
involving high-fidelity simulations are usually large-scale, moreover
the number of solutions required increases with the number of
parameters. In this context, we mention optimization tasks, inverse problems,
optimal control problems, and uncertainty
quantification; they all suffer from the curse of dimensionality, that
is, in this case, the computational time grows exponentially with the
dimension of the input parameter space. Data-driven reduced order
methods (ROM)~\cite{brunton2019data, rozza2018advances,
salmoiraghi2016advances} have been developed to deal with such costly outer loop
applications for parametric PDEs, but the limit for high dimensional parameter spaces remains.

One approach to alleviate the curse of dimensionality is to identify
and exploit some notion of low-dimensional structure of the
model or objective function that maps the inputs to the outputs of interest. A possible linear input coordinate transformation technique
is the Sliced Inverse Regression (SIR)~\cite{li1991sliced} approach
and its extensions~\cite{cook2005sufficient, li2007sparse,
wu2009localized}. Sharing some characteristics with
SIR, there is the active subspaces (AS)
property\footnote{Some authors refer to the
active subspaces method, we prefer to employ the term active subspaces property,
as suggested by Constantine.}~\cite{russi2010uncertainty, constantine2015active,
constantine2017global, zahm2020gradient} which, in the last years, has
emerged as a powerful linear data-driven technique to construct ridge
approximations using gradients of the model function. AS has been
successfully applied to quantify uncertainty in the numerical
simulation of the HyShot II scramjet~\cite{constantine2015exploiting},
and for sensitivity analysis of an integrated hydrologic
model~\cite{jefferson2015active}.
Reduction in parameter space has been coupled with model order
reduction techniques~\cite{hesthaven2016certified,
quarteroni2014reduced, morhandbook2020} to enable more complex
numerical studies without increasing the computational load. We
mention the use of AS in cardiovascular applications with
POD-Galerkin~\cite{tezzele2018combined}, in nonlinear structural
analysis~\cite{guo2018reduced}, in nautical and naval
engineering~\cite{tezzele2018dimension, tezzele2018ecmi,
tezzele2019marine, tezzele2018model}, coupled with POD with interpolation for structural
and computational fluid dynamic (CFD) analysis~\cite{demo2019cras, tezzele2021multi}, and with Dynamic Mode
Decomposition in~\cite{tezzele2020enhancing}. Applications in
automotive engineering within a
multi-fidelity setting can be found in~\cite{romor2021multi}, for
turbomachinery see~\cite{seshadri2018turbomachinery}, while
for results in chemistry see~\cite{ji2019quantifying, vohra2019active}.
Advances in efficient global design optimization with surrogate
modeling are presented in~\cite{lukaczyk2014active,
lukaczyk2015surrogate} and applied to the shape design of the $N+2$
Supersonic Passenger Jet. Applications to enhance optimization methods
have been developed in~\cite{tripathy2016gaussian,
ghoreishi2019adaptive, demo2020asga, demo2021hull}. AS has also been successfully
used to reduced the memory consumption of highly parametrized systems
such as artificial neural networks~\cite{cui2020active, meneghetti2021dimensionality}.

Possible extensions and variants of the active subspaces property are
the Local Active Subspace method~\cite{romor2021las}, the Active
Manifold method~\cite{Bridges2019ActiveMA} which reduces the problem to the
analysis of a 1D manifold by traversing the level sets of the model
function at the expense of high online costs, the shared Active Subspace
method~\cite{ji2018shared}, the active subspaces property for
multivariate functions~\cite{zahm2020gradient}, and more recently an
extension of AS to dynamical systems~\cite{aguiar2018dynamic}. Another
method is Nonlinear Level set Learning
(NLL)~\cite{zhang2019learning} which exploits RevNets to reduce the
input parameter space with a nonlinear transformation.

The search for low dimensional structures is also investigated in machine
learning with manifold learning algorithms. In this context the Active
Subspaces methodology can be seen as a supervised dimension reduction
technique along with Kernel Principal Component Analysis
(KPCA)~\cite{sriperumbudur2017approximate} and Supervised Kernel
Principal Component Analysis
(SKPCA)~\cite{barshan2011supervised}. Other methods in the context of
kernel-based ROMs are~\cite{heas2020generalized, kevrekidis2015kernel,
mika1999kernel}.  In~\cite{palaci2018gaussian} a
non-linear extension of the active subspaces property based on
Random Fourier Features~\cite{rahimi2008random, li2019a} is
introduced and compared with machine learning manifold learning
algorithms for the construction of Gaussian process regressions
(GPR)~\cite{williams2006gaussian}.

From the preliminary work~\cite{palaci2018gaussian} in the context of supervised dimension
reduction algorithms in machine learning, we develop the kernel-based active subspaces
(KAS) method. The novelties of our contribution are the following:
\begin{itemize}
    \item regarding the AS theoretical background, we provide an upper bound of the ridge
    approximation error~\eqref{def:ridge_pb} for
    vector-valued objective functions and for a wide collection of probability
    distributions (see Assumption~\ref{ass:pdf}).
    \item we extend kernel-based AS to
    vector-valued model functions and develop a detailed algorithmic procedure
    for the optimization of the feature map. We also test different spectral
    measures (see Equation~\eqref{eq:Bochner's theorem} for the definition), differently from~\cite{palaci2018gaussian} where only the Gaussian
    measure is employed.
    \item the application to several test problems of increasing
    complexity. In particular, we mainly test KAS on problems where the active
    subspace is not present or the behaviour is not linear, differently
    from~\cite{palaci2018gaussian}, where the comparison is made with KPCA and
    its variants on datasets with linear trends in the reduced parameter space,
    apart from the hyperparaboloid test case that we have also included among
    our toy problems.
    \item the KAS method is finally applied to a computational fluid
    dynamics problem and compared with the standard AS technique. We study
    the evolution of fluid flow past a NACA~0012 airfoil in a duct
    composed by an initialization channel and a chamber. The motion is
    modelled with the unsteady incompressible Navier-Stokes
    equations, and discretized with the Discontinuous Galerkin
    method (DG)~\cite{hesthaven2007nodal}. Physical and geometrical parameters
    are introduced and sensitivity analysis of the lift and drag coefficients with respect to
    these parameters is provided.
\end{itemize}

The work is divided as follows: in \autoref{sec:as} we briefly
present the active subspaces property of a model function with a focus
on the construction of Gaussian process response surfaces.
Then, \autoref{sec:nas} illustrates the novel method called kernel-based
active subspaces for both scalar and vector-valued model
functions.
Several tests to compare AS and KAS are provided in \autoref{sec:test}
where we start from scalar functions with radial symmetry, we analyze an epidemiology model and a
vector-valued output generated from a stochastic elliptic PDE. A
parametric CFD test case for the study of the flow past a NACA airfoil
using the Discontinuous Galerkin method is presented in
\autoref{sec:results}. Finally, we outline some perspectives and future
studies in \autoref{sec:the_end}.

\section{Active Subspaces for parameter space reduction}
\label{sec:as}

Active Subspaces (AS) approach proposed in~\cite{russi2010uncertainty}
and developed in~\cite{constantine2015active} is a technique for
dimension reduction in parameter space. In brief AS are defined as the leading
eigenspaces of the second moment matrix of the model function's
gradient (for scalar model functions) and constitutes a global
sensitivity index~\cite{zahm2020gradient}. In the context of ridge
approximation, the choice of the active subspace corresponds to the
minimizer of an upper bound of the mean square error obtained through
Poincaré-type inequalities~\cite{zahm2020gradient}. After
performing dimension reduction in the parameter space through AS, the
method can be applied to reduce the computational costs of different
parameter studies such as inverse problems, optimization tasks and
numerical integration. In this work we
are going to focus on the construction of response surfaces with
Gaussian process regression.

\begin{definition}[Hypothesis on input and output spaces]
The quantities related to the input space are:
\label{def:hp}
\begin{itemize}
\item $m\in\mathbb{N}$ the dimension of the input space,
\item $(\Omega, \mathcal{F}, P)$ the probability space,
\item $\X:(\Omega, \mathcal{F},P)\rightarrow \mathbb{R}^m$, the absolutely continuous random vector representing the parameters,
\item $\rho:\mathbb{R}^{m}\rightarrow \mathbb{R}$, the probability density of $\mathbf{X}$ with support $\mathcal{X}\subset\mathbb{R}^{m}$.
\end{itemize}
The quantities related to the output are:
\begin{itemize}
\item $d\in\mathbb{N}$ the dimension of the output space,
\item $V=(\mathbb{R}^{d}, R_{V})$ the Euclidean space with metric
$R_{V}\in\mathcal{M}(d\times d)$\footnote{\RB{In this work with $\mathcal{M}(m \times n)$ we denote the set of real matrices with
$m$ rows and $n$ columns.}} and norm
$$\lVert \mathbf{x}\rVert^{2}_{R_{V}}=\mathbf{x}^{T}R_{V}\mathbf{x},$$
\item $f:\mathcal{X}\subset\mathbb{R}^{m}\rightarrow V$, the quantity/function
of interest, also called objective function in optimization tasks.
\end{itemize}
\end{definition}

Let $\mathcal{B} ( \mathbb{R}^m )$ be the Borel $\sigma$-algebra of
$\mathbb{R}^m$. We will consider the Hilbert space $L^2 (\mathbb{R}^m, \mathcal{B} (\mathbb{R}^m), \rho \,; \,V)$, of the measurable functions $f:(\mathbb{R}^m,
\mathcal{B}(\mathbb{R}^m),\rho)\rightarrow(\mathbb{R}^{d},R_V)$
such that
\begin{equation*}
\lVert f\rVert^{2}_{L^{2}}:=\int_{\mathcal{X}}\lVert f(\x)\rVert^2_{R_V}\,d\rho(\x)\leq \infty;
\end{equation*}
and the Sobolev space $H^{1}(\mathbb{R}^m, \mathcal{B} (\mathbb{R}^m), \rho \,; \,V)$ of measurable functions $f:(\mathbb{R}^m,
\mathcal{B}(\mathbb{R}^m),\rho)\rightarrow(\mathbb{R}^{d},R_V)$ such that
\begin{equation}
\lVert f\rVert^{2}_{H^{1}}:=\lVert f\rVert^{2}_{L^{2}}+\lVert\nabla f\rVert^{2}_{L^{2}}=\lVert f\rVert^{2}_{L^{2}}+|f|^{2}_{H^{1}}\leq \infty
\end{equation}
where $\nabla f$ is the weak derivative of $f$, and $\lVert\nabla
f\rVert_{L^{2}}=:|f|_{H^1}$.

We briefly recall how dimension reduction in parameter space is
achieved in the construction of response surfaces. The first step
involves the approximation of the model function with ridge
approximation. We will follow~\cite{zahm2020gradient, parente2020generalized}
for a review of the method.

The ridge approximation problem can be stated in the following way:
\begin{definition}[Ridge approximation]
\label{def:ridge_pb}
Let $\mathcal{B}(\mathbb{R}^m)$ be the
Borel $\sigma$-algebra of $\mathbb{R}^m$. Given
$r\in\mathbb{N},\,r\ll d$ and a tolerance $\epsilon\geq 0$, find the
profile $h:(\mathbb{R}^m, \mathcal{B}(\mathbb{R}^m),
\rho)\rightarrow V$ and the $r$-rank projection
$P_r:\mathbb{R}^m\rightarrow\mathbb{R}^m$ such that
\RA{
\begin{equation}
\label{eq:ridge_ineq}
\mathbb{E}_{P}[\lVert
f(\X)-h(P_r\X)\rVert^2_{R_V}]\leq \epsilon^2.
\end{equation}}
\end{definition}

In particular we are interested in the minimization problem
\RA{
\begin{equation}
\label{eq:min_pb}
  \argmin_{P_r \in \mathcal{M}(m\times m)} \, \mathbb{E}_{P} \left [ \lVert
f(\X)-\Tilde{h}(P_r\X) \rVert^2_{R_V} \right ],
\end{equation}}
where $\Tilde{h}=\mathbb{E}_{\rho}[ f |\sigma(P_r) ]$ is the conditional
expectation of $f$ under the distribution $\rho$ given the
$\sigma$-algebra $\sigma(P_r)$. The range of the projector $P_r$,
$\mathbb{R}^{r}\sim\text{Im}(P_{r})\subset\mathbb{R}^{m}$, is the reduced
parameter space. The kernel of the projector $P_r$,
$\mathbb{R}^{m-r}\sim\text{Im}(P_{r})\subset\mathbb{R}^{m}$, is the inactive subspace. The
existence of $\Tilde{h}$ is guaranteed by the Doob-Dynkin
lemma~\cite{bobrowski2005functional}. The function $\Tilde{h}$ is
proven to be the optimal profile for each fixed $P_r$, as a
consequence of the definition of the conditional expectation of a
random variable with respect to a $\sigma$-algebra.

Dimension reduction is effective if the
inequality~\eqref{eq:ridge_ineq} is satisfied for a specific
tolerance. The choice of $r$ is certainly of central
importance. The dimension of the reduced parameter space can be chosen a
priori for a specific parameter study (for example $r$-dimensional
regression), it can be chosen in order to satisfy the
inequality~\eqref{eq:ridge_ineq} or it is determined to guarantee a
good accuracy of the numerical method used to evaluate it [Corollary
3.10,~\cite{constantine2014active}].

Dividing the left term of the inequality \eqref{eq:ridge_ineq} with $\mathbb{E}_{\rho}[\lVert
f(\X)-\mathbb{E}_{\rho}[f(\X])\rVert^2_{R_V}]$ we obtain the Relative Root Mean
Square Error (RRMSE) and since it is a normalized quantity, we will use it to
make comparisons between different models
\RA{
\begin{equation}
\label{eq:RRMSE}
\text{RRMSE} = \sqrt{\frac{\mathbb{E}_{P}[\lVert
    f(\X)-h(P_r\X)\rVert^2_{R_V}]}{\mathbb{E}_{P}[\lVert
    f(\X)-\mathbb{E}_{P}[f(\X)]\rVert^2_{R_V}]}}.
\end{equation}}
We remark that $P_{r}$ is not unique.
It can be shown that if $\Tilde{h}$ is the optimal profile, then $P_r$ is not
uniquely defined and can be chosen arbitrarily from the set
$\{ Q_{r} : \mathbb{R}^m \rightarrow \mathbb{R}^m | \, \ker Q_{r}
 = \ker  P_{r}  \}$, see [Proposition 2.2, \cite{zahm2020gradient}].

The following lemma is the key ingredient in the proof of the existence of an
active subspace. It is inherently linked to probability Poincaré inequalities of
the kind
\RA{
\begin{equation}
\int_{\mathcal{X}}\lVert h(\mathbf{x})\rVert_{L^{2}}^2\,d\rho(\x)\leq\,C_{P}(\mathcal{X}, \rho)\,\int_{\mathcal{X}}\lVert \nabla h(\mathbf{x})\rVert_{L^{2}}^2\,d\rho(\x),
\end{equation}}
for zero-mean functions in the Sobolev space $h\in
H^{1}(\mathcal{X})$, where $C_{P}(\mathcal{X}, \rho)$ is the Poincaré
constant dependent on the domain $\mathcal{X}$ and on the probability
density functions (p.d.f.), $\rho$. We need to make the following
assumption to prove the next lemma and the next theorem.
\begin{definition}
\label{ass:pdf}
The probability density function $\rho:\mathcal{X}\rightarrow\mathbb{R}$ belongs to one of the following classes:
\begin{enumerate}
\item $\mathcal{X}$ is convex and bounded, $\exists \delta, D>0:\,0<\delta\leq\lVert\rho(\mathbf{x})\rVert_{L^{\infty}}\leq D<\infty\,\forall \x\in\mathcal{X}$,
\item $\rho(\x)\sim\exp(-V(\x))$ where $V:\mathbb{R}^{m}\rightarrow (-\infty,\infty]\,,V\in\mathcal{C}^{2}$ is $\alpha$-uniformly convex,
\begin{gather}
\mathbf{u}^{T}\text{Hess}(V(\x))\mathbf{u}\geq\,\alpha \lVert\mathbf{u}\rVert^{2}_{2},\quad\forall\x, \mathbf{u}\in\mathbb{R}^{m}
\end{gather}
where $\text{Hess}(V(\x))$ is the Hessian of $V(\x)$.
\item $\rho(\x)\sim\exp(-V(\x))$ where $V$ is a convex function. In this case we require also $f$ Lipschitz continuous.
\end{enumerate}
\end{definition}
In particular the uniform distribution belongs to the first class, the
multivariate Gaussian distribution $\mathcal{N}(m, \Sigma)$ to the
second with $\alpha=1/(\sigma_{max}(\Sigma))$ and the exponential and
Laplace distributions to the third. A complete analysis of the various
cases is done in~\cite{parente2020generalized}.
\begin{proposition}
\label{lemma:subspace_P_inequality}
Let $(\Omega, \mathcal{F}, P)$ be a probability space, $\X:(\Omega,
\mathcal{F},P)\rightarrow \mathbb{R}^m$ an absolutely continuous
random vector with probability density function $\rho$ belonging to
one of the classes from the Assumption~\ref{ass:pdf}. Then the
following inequality is satisfied
\begin{gather}
\mathbb{E}_{\rho}\left[\left(h-\mathbb{E}_{\rho}[h|\sigma (P_r)]\right)^{2}|\sigma(P_r)\right]\leq C_{P}(P_{r}, \rho)\,\mathbb{E}_{\rho}\left[\lVert(I-P_{r}^{T})\nabla h\rVert^{2}_{2}|\sigma (P_r)\right]
\end{gather}
for all scalar functions $h\in H^{1}(\mathcal{X})$ and for all $r$-rank
orthogonal projectors, $P_r$, where $C_{P}(P_{r}, \rho)$ is the Poincaré constant depending on $P_{r}$ and on the p.d.f. $\rho$.
\end{proposition}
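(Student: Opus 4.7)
The plan is to reduce the claim to slicewise Poincaré inequalities on the $(m-r)$-dimensional affine fibers $\y+\operatorname{Ker}(P_r)$ by disintegrating $\rho$ along $P_r$, and then to invoke, for each of the three regimes of Assumption~\ref{ass:pdf}, a Poincaré inequality on those fibers with a constant uniform in $\y$.

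Concretely, since $P_r$ is an orthogonal projector of rank $r$, $\mathbb{R}^m=\operatorname{Im}(P_r)\oplus\operatorname{Ker}(P_r)$ and $I-P_r^T=I-P_r$ projects onto $\operatorname{Ker}(P_r)$. Writing $\x=\y+\z$ with $\y=P_r\x\in\operatorname{Im}(P_r)$ and $\z=(I-P_r)\x\in\operatorname{Ker}(P_r)$, the Doob--Dynkin lemma identifies $\mathbb{E}_{\rho}[h\mid\sigma(P_r)]$ with
\[
\bar h(\y)=\int_{\operatorname{Ker}(P_r)} h(\y+\z)\,\rho(\z\mid\y)\,d\z,
\]
where $\rho(\cdot\mid\y)$ is the conditional density on the slice. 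After this substitution the left-hand side becomes, for almost every $\y$, the variance of $\z\mapsto h(\y+\z)$ under $\rho(\cdot\mid\y)$, while the right-hand side integrates $\lVert(I-P_r)\nabla h(\y+\z)\rVert_2^2$ against the same measure. Since $(I-P_r)\nabla h$ lives in $\operatorname{Ker}(P_r)$ and there coincides with the gradient of $\z\mapsto h(\y+\z)$, the claim reduces to a zero-mean Poincaré inequality on $\operatorname{Ker}(P_r)$ for each fixed $\y$.

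Next I would verify this slice Poincaré inequality case by case. In class~(1) of Assumption~\ref{ass:pdf} the slice $\mathcal{X}\cap(\y+\operatorname{Ker}(P_r))$ is convex and bounded and the two-sided bounds $\delta,D$ transfer to the conditional density, so the Payne--Weinberger inequality supplies a Poincaré constant depending only on $\delta$, $D$ and on $\operatorname{diam}(\mathcal{X})$. In class~(2), the $\alpha$-uniform convexity of $V$ is preserved under conditioning on the linear constraint $P_r\x=\y$, so the Bakry--Émery criterion gives a slice Poincaré constant at most $1/\alpha$. In class~(3) the conditional density is only log-concave, but the Lipschitz assumption on $f$ (and thus on each scalar $h$) permits invoking a Bobkov--Ledoux-type Poincaré inequality whose constant is controlled by $\operatorname{Lip}(h)$ and by $\rho$. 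Packaging the three constants into a single $C_P(P_r,\rho)$ yields the pointwise conditional inequality claimed.

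The main obstacle is the uniformity of the slice Poincaré constant in $\y$: immediate in case~(2) via Bakry--Émery and classical in case~(1) via Payne--Weinberger, but delicate in case~(3), where a general log-concave measure only admits a Poincaré constant through its Cheeger constant, which does not behave automatically under conditioning. The Lipschitz assumption on $f$ is what restores tractability, effectively replacing the $L^2$ gradient control by an $L^\infty$ one. The remaining steps amount to standard disintegration and Fubini arguments, which I would borrow from the analysis in~\cite{parente2020generalized}.
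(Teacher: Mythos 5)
Your overall strategy---disintegrate $\rho$ along the fibers $\y+\ker(P_r)$, identify the left-hand side with the conditional variance on each fiber, observe that $(I-P_r)\nabla h$ is the intrinsic gradient along the fiber, and then invoke a Poincar\'e inequality on each slice---is exactly the route taken by the sources the paper defers to (the paper itself does not prove this proposition; it cites Proposition 2.4--2.5 of Zahm et al.\ and the lemmas of Parente et al., and only uses the statement as a black box in the Appendix). Your treatment of classes (1) and (2) is correct: Payne--Weinberger/Bebendorf with the $\delta,D$ density bounds for the bounded convex case, and Bakry--\'Emery with the observation that $\alpha$-uniform convexity of $V$ restricts to $\alpha$-uniform convexity on every affine subspace, giving the uniform constant $1/\alpha$.

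The gap is in class (3). The proposition quantifies over \emph{all} $h\in H^{1}(\mathcal{X})$, so a ``Poincar\'e constant controlled by $\operatorname{Lip}(h)$'' is not available and, more fundamentally, is not a Poincar\'e inequality of the stated form: $C_P$ must not depend on $h$. The Lipschitz hypothesis in class (3) is imposed on $f$ and is not used at this stage at all; it enters only later, in the proof of Theorem~\ref{theo:existence}, where the H\"older step with exponents $(\tfrac{\tau+1}{\tau},1+\tau)$ requires a finite moment of the fiber-dependent constant together with an $L^{\infty}$ bound on $\nabla f$. The correct reading of the proposition for class (3) is that $C_P(P_r,\rho)$ is a $\sigma(P_r)$-measurable function of the fiber (as the Appendix's notation $C_{p}(\rho,P_{r}(\mathbf{X}))$ makes explicit), not a uniform constant: each conditional of a log-concave density onto an affine slice is again log-concave by Pr\'ekopa's theorem and hence satisfies \emph{some} Poincar\'e inequality (Bobkov, or KLS-type bounds via the covariance of the slice), and no uniformity in $\y$ is claimed or needed. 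So your identification of the obstacle is right, but your proposed resolution would not close; the fix is to weaken what the proposition asserts for class (3) rather than to strengthen the slice inequality.
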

A summary of the values of the Poincar\'e constant in relationship with the choice of the probability density function $\rho$ is reported in~\cite{parente2020generalized}.

In the next theorem the projection $P_{r}$ will depend on the output function $f$, so also the Poincar\'e constant $C_{P}(P_{r}, \rho)$ will depend in fact on $f$.

We introduce the following notation for the matrix that substitutes
the uncentered covariance matrix of the gradient $\nabla f$ in the case of the
application of AS to scalar model
functions~\cite{constantine2014active}
\begin{equation*}
H=\int_{\mathcal{X}} (D_{\x}f(\x))^T R_V(\rho) (D_{\x} f(\x)) \, d\rho (\x).
\end{equation*}
where $D_{\x} f(\x) \in \mathcal{M}(d\times m)$ is the
Jacobian matrix of $f$. The matrix $R_V(\rho)$ depends on the class which $\rho$ belongs to, see Appendix~\ref{sec:appendix}.

\begin{theorem}[Existence of an active subspace]
\label{theo:existence}
Under the hypothesis \ref{def:hp}, let $f\in H^1
(\mathbb{R}^m, \mathcal{B} (\mathbb{R}^m), \rho \,; \,V)$ and let the
p.d.f. $\rho$ satisfy Lemma~\ref{lemma:subspace_P_inequality} and
Assumption~\ref{ass:pdf}. Then the solution $\Tilde{P}_{r}$ of the ridge
approximation problem~\ref{def:ridge_pb} is the orthogonal
projector to the eigenspace of the first $r$-eigenvalues of $H$ ordered by magnitude
\begin{equation*}
Hv_i=\lambda_i v_i\qquad\forall i\in\{1,\dots,m\},\qquad
\Tilde{P}_{r}=\sum_{j=1}^{r}v_{j}\otimes v_{j},
\end{equation*}
with $r\in\mathbb{N}$ chosen such that
\RA{
\begin{equation}
\mathbb{E}_{\rho}\left[\lVert
f-\Tilde{h}\rVert^2_{R_V}\right]\leq\,C(C_{P}, \tau)\,\left(\sum_{i=r+1}^{m}\lambda_{i}\right)^{\frac{1}{1+\tau}}\leq \epsilon^2.
\end{equation}}
with $C(C_{P}, \tau)$ a constant depending on $\tau>0$ related to the choice of
$\rho$ and on the Poincar\'e constant from
lemma~\ref{lemma:subspace_P_inequality}, and $\Tilde{h}=\mathbb{E}_{\rho}[ f
|\sigma(P_r) ]$ is the conditional expectation of $f$ given the $\sigma$-algebra
generated by the random variable $P_r\circ\X$.
.

\end{theorem}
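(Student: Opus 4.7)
The plan is to split the proof into two main steps: first, control the ridge-approximation error by a Poincar\'e-type inequality applied in the $R_V$-metric; second, minimise the resulting quadratic expression in $P_r$ via a Ky Fan eigenvalue argument. Throughout, I would work with the functional
$$
J(P_r) := \mathbb{E}_{\rho}\bigl[\lVert f(\X) - \Tilde{h}(P_r\X)\rVert_V^2\bigr]
$$
with $\Tilde{h} = \mathbb{E}_{\rho}[f \mid \sigma(P_r)]$, which is precisely the quantity being minimised in~\eqref{eq:min_pb}.

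For the Poincar\'e step, I would diagonalise $R_V$ so that the $V$-norm decomposes into scalar components of the form $\mathbb{E}_{\rho}[(f_i - \mathbb{E}_{\rho}[f_i \mid \sigma(P_r)])^2]$. Applying Proposition~\ref{lemma:subspace_P_inequality} componentwise together with the tower property of conditional expectation, and recombining through $R_V$, yields
$$
J(P_r) \le C_P(P_r,\rho)\, \mathbb{E}_{\rho}\bigl[\lVert (I - P_r^T)\nabla f\rVert_{R_V}^2\bigr] = C_P(P_r,\rho)\, \operatorname{tr}\bigl((I - P_r)H(I - P_r)\bigr),
$$
at least for classes (1) and (2) of Assumption~\ref{ass:pdf}, for which $\tau=0$. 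For class (3) a classical Poincar\'e inequality is not directly available; here the Lipschitz hypothesis on $f$ enters, and I would combine the weak Poincar\'e inequality for merely log-concave measures with a H\"older interpolation to recover the bound with exponent $\tfrac{1}{1+\tau}$, the constant $C(C_P,\tau)$ absorbing both the Poincar\'e constant and the Lipschitz seminorm of $f$.

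The Ky Fan step is then a classical minimisation. Since $P_r$ is an orthogonal projector, $\operatorname{tr}((I-P_r)H(I-P_r)) = \operatorname{tr}(H) - \operatorname{tr}(P_r H)$, so minimising the trace over rank-$r$ orthogonal projectors is equivalent to maximising $\operatorname{tr}(P_r H)$. The Ky Fan variational principle identifies the maximiser as the projector onto the span of the top $r$ eigenvectors $v_1,\dots,v_r$ of $H$, with value $\sum_{i=1}^r \lambda_i$; hence the minimum trace equals $\sum_{i=r+1}^m \lambda_i$ and is attained at $\Tilde{P}_r = \sum_{j=1}^r v_j \otimes v_j$. Combining with the Poincar\'e bound, raising to the $\tfrac{1}{1+\tau}$ power where needed, and then asking that the resulting upper bound be at most $\epsilon^2$ fixes the admissible $r$ and closes the argument.

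The principal obstacle is the class (3) case of Assumption~\ref{ass:pdf}: for merely convex potentials (e.g.\ those generating exponential or Laplace densities) there is no classical Poincar\'e inequality with a dimension-free constant, and one must invoke a weak Poincar\'e inequality combined with the Lipschitz regularity of $f$, which is exactly what produces the anomalous exponent $\tfrac{1}{1+\tau}$ and a rather delicate bookkeeping of constants. The remaining ingredients — the scalar-to-vector reduction via the $R_V$ metric, the identification of the right-hand side with $\operatorname{tr}((I-P_r)H(I-P_r))$, and the Ky Fan minimisation — are standard; the technical work is in making the constants uniform enough that the final inequality holds across all three classes of probability measures covered by Assumption~\ref{ass:pdf}.
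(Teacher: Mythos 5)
Your proposal follows essentially the same route as the paper's own proof: diagonalise $R_V$ to reduce to scalar components, apply the subspace Poincar\'e inequality of Proposition~\ref{lemma:subspace_P_inequality} together with the tower property, use H\"older with exponents $(\infty,1)$ for classes (1)--(2) and $(\tfrac{\tau+1}{\tau},1+\tau)$ for class (3) (where the Lipschitz hypothesis enters), recombine into $\operatorname{tr}((I-P_r^T)H(I-P_r))$, and minimise over rank-$r$ orthogonal projectors --- your Ky Fan argument is the same eigenvalue/SVD minimisation the paper invokes. The only detail you gloss over is that in class (3) the metric inside $H$ is itself modified to $\sum_i\beta_i^{1+\tau}\,\mathbf{w}_i\otimes\mathbf{w}_i$ rather than being absorbed into the constant, but this is bookkeeping, not a gap.
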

\begin{proof}
This theorem summarizes the results from Proposition 2.5 and Proposition 2.6
of~\cite{zahm2020gradient}, and from Lemma 3.1, Lemma 4.2, Lemma 4.3, Lemma 4.4
and Theorem 4.5 of~\cite{parente2020generalized}. The proof is expanded in
Appendix~\ref{sec:appendix}.
\end{proof}
The eigenspace $\text{span}\{v_1,\dots, v_r\}\subset\R^m$
is the active subspace and the remaining eigenvectors generate the
inactive subspace
$\text{span}\{v_{r+1},\dots,v_m\}\subset\R^m$. The
condition $f\in L^2 (\R^m, \mathcal{B} (\R^m), \rho \,; \,V)$ is
necessary for $f$ to satisfy the error bound~\eqref{eq:ridge_ineq}.

For the explicit procedure to compute the active subspace given its dimension
$r$ see Algorithm~\ref{algo:AS}: from $W_1$ and $W_2$ we define the approximations of
the projector $P_r$ with $\Hat{P}_r = W_1W_1^T$.

\begin{algorithm}
\caption{Active subspace computation.}
\label{algo:AS}
\begin{algorithmic}[1]
\Require gradients dataset $dY = (dy_1, \dots, dy_M)^T,\ dy_i\in\mathcal{M}(d\times
    m)$, that is $dY$ is a $3$-rank tensor
\Require symmetric positive definite metric matrix $R_V\in \mathcal{M}(d\times d)$
\Require active subspace dimension $r$

\State Compute the uncentered covariance matrix with Monte Carlo:
\begin{equation*}
\Tilde{H}=\frac{1}{M} \sum^M_{j=1} dY[j, :, :]^T R_V dY[j, :, :].
\end{equation*}
\State Solve the eigenvalue problem:
\begin{gather*}
\Tilde{H}\mathbf{v}_i = \lambda_i\mathbf{v}_i\qquad\forall i\in\{1,\dots,m\},\\
W_1 = (\mathbf{v}_1, \dots, \mathbf{v}_r),\quad W_2 = (\mathbf{v}_{r+1},\dots, \mathbf{v}_m) \Rightarrow \Hat{P}_r:=W_1W_1^T.
\end{gather*}
\State \Return active eigenvectors $W_1=(\mathbf{v}_1,\dots, \mathbf{v}_r)$ and
inactive eigenvectors $W_2=(\mathbf{v}_{r+1},\dots, \mathbf{v}_m)$ with $\mathbf{v}_i\in\R^m$,
and ordered eigenvalues $(\lambda_1,\dots,\lambda_m)$
\end{algorithmic}
\end{algorithm}

\subsection{Response surfaces}
\label{ssec:response_surfaces}
The term response surface refers to the general procedure of finding
the values of a model function $f$ for new inputs without directly
computing it but exploiting regression or interpolation from a
training set $\{\x_i,\ f(\x_i)\}$. The procedure for
constructing a Gaussian process response is reported in
Algorithm~\ref{algo:response_surface}, while in Algorithm~\ref{algo:gp_pred} we
show how to exploit it o predict the model function
at new input parameters.

Directly applying the simple Monte Carlo method with $N$ samples we get a
reduced approximation of $f$ as
\RA{
\begin{equation}
    (\Tilde{h}_{\epsilon}\circ P_r) (\X) = \mathbb{E}_{\rho}\left[ f |
      \sigma(P_r) \right] \approx \frac{1}{N} \sum^N_{i=1} f(\hat{P}_r
    \X + (I_d - \hat{P}_r ) \mathbf{Y}_i ) =: \hat{h}_{\epsilon, N}
    (\hat{P}_r \X),
\end{equation}}
where we have made explicit the dependence of the optimal profile
$\Tilde{h}_{\epsilon}$ on $\epsilon$,
$\mathbf{Y}_1,\dots,\mathbf{Y}_N$ are independent and
identically distributed samples of $\mathbf{Y} \sim \rho$, and
$\hat{P}_r$ is an approximation of $P_r$ obtained with the simple Monte Carlo
method from $H$, see Algorithm~\ref{algo:AS}. An intermediate approximation error is obtained
employing the Poincaré inequality and the central limit theorem for
the Monte Carlo approximation
\RA{
\begin{equation}
    \mathbb{E}_{P}\left[ (f(\X)-\hat{h}_{\epsilon, N} (\hat{P}_r\X))^2
    \right] \leq\ C_1 \left(1+N^{-1/2}\right)^2
    (\lambda_{n+1}+ \dots + \lambda_m ),
\end{equation}}
where $C_1$ is a constant, and $\lambda_{n+1}, \dots, \lambda_m$ are
the eigenvalues of the inactive subspace of $H$ [Theorem 4.4, \cite{constantine2014active}].

In practice $\hat{h}_{\epsilon, N}(\hat{P}_r\X)$ is approximated with a
regression or an interpolation such that a response surface
$\mathscr{R}$ satisfying
$\mathbb{E}_{\rho}\left[(\hat{h}_{\epsilon, N} (\hat{P}_r\x)-\mathscr{R}(\hat{P}_r\x)^2)
\right] \leq\ C_2 \delta$ is built, where $C_2$ is a constant, and
$\delta$ depends on the chosen method. An estimate for the successive
approximations
\begin{equation}
    f(\X) \approx \Tilde{h}_{\epsilon}(P_r\X)\approx
    \hat{h}_{\epsilon, N}(\hat{P}_r\X)\approx
    \mathscr{R}_{\epsilon,N,\delta}(\hat{P}_r\X),
\end{equation}
is given by
\RA{
\begin{align*}
    \mathbb{E}_{P}&\left[(f(\X)-\mathscr{R}(\hat{P}_r\X))^2\right] \\
    &\leq \ C_1(1+N^{-1/2})^2\left(
      \tau(\lambda_1+\dots+\lambda_n)^{1/2}+(\lambda_{n+1}+\dots+\lambda_m)^{1/2}\right)^2+C_2\lambda
\end{align*}}
where $\text{dist}(\text{Im}(P_r), \text{Im}(\hat{P}_r))
\leq\tau$, and $\lambda_i$ are the eigenvalues of $H$ [Theorem 4.8, \cite{constantine2014active}].

In our numerical simulations we will build the response surface
$\mathscr{R}$ with Gaussian process
regression (GPR)~\cite{williams2006gaussian}.

\begin{algorithm}
  \caption{Response surface construction with Gaussian process
    regression over the active subspace.}\label{algo:response_surface}

\begin{algorithmic}[1]
\Require normalized input dataset $X=(\x_1, \dots, \x_M)^T,\
  \x_i\in\R^m$
\Require output dataset $Y=(y_1, \dots, y_M)^T,\ y_i\in\R$
\Require active eigenvectors $W_1=(\mathbf{v}_1,\dots, \mathbf{v}_r),\
  \mathbf{v}_i\in\R^m$
  \Require kernel $k:\R^m\times\R^m \rightarrow \R$, \RC{with} hyper-parameters
  $\theta$ and the variance $\epsilon$ of the Gaussian noise

\State Project the inputs in the active subspace:
$XW_1 = \Tilde{X}\in\mathcal{M}(M\times r)$.
\State Evaluate the Gram matrix:
$K_{ij}(\theta)=k(\Tilde{\x}_i, \Tilde{\x}_j;\ \theta),\quad 1 \leq i,j \leq r$.
\State Tune the hyperparameters minimizing the negative log-likelihood:
\begin{gather*}
\bar{\theta} = \arg \min_{\theta} \,\, -\log{p(y|x, \theta)}\propto \frac{1}{2}\log{|K(\theta)+\sigma I_M|}+\frac{1}{2}Y^{T}(K(\theta)+\sigma I_M)^{-1}Y\; ,
\end{gather*}

\State \Return trained Gaussian process
\end{algorithmic}
\end{algorithm}
\begin{algorithm}
  \caption{Prediction phase using the Gaussian process response surface
  over the active subspace.}\label{algo:gp_pred}

\begin{algorithmic}[1]
\Require trained response surface $\y(\x)$
\Require active eigenvectors $W_1=(\mathbf{v}_1,\dots, \mathbf{v}_r),\
  \mathbf{v}_i\in\R^m$
\Require test samples $\overline{\x}\in \R^m$

\State Map the test samples $\overline{\x}$ onto the active subspace:
$\Tilde{\x} = W_1 \overline{\x}$.
\State Evaluate the Gaussian process on $\Tilde{\x}$ and return the
  prediction $t\sim\mathcal{N}(\mathbb{E}[t], \sigma^{2}(t))$:
\begin{equation*}
\mathbb{E}[t] =k(\Tilde{\x}, X)K^{-1}Y, \qquad
\sigma^2 (t) = k(\Tilde{\x}, \Tilde{\x}) - k(\Tilde{\x}, X) K^{-1}k(X,
\Tilde{\x}).
\end{equation*}
%
% \RETURN{prediction $t$}
%
\end{algorithmic}
\end{algorithm}

\section{Kernel-based Active Subspaces extension}
\label{sec:nas}
\begin{figure}[ht!]
\centering
\includegraphics[width=.8\textwidth]{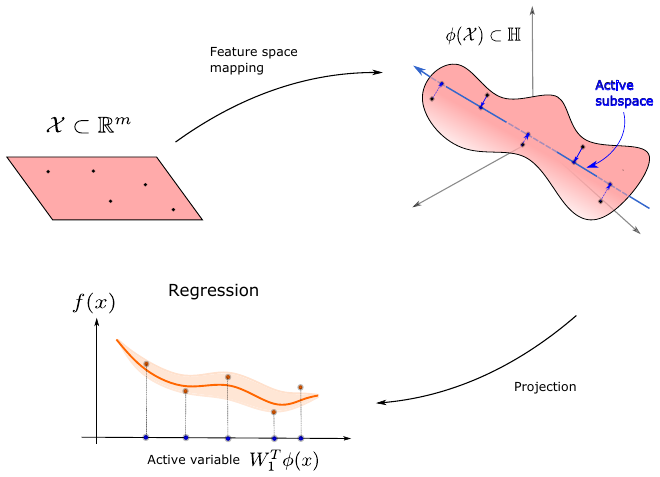}
\caption{Illustration of the construction of a one-dimensional response surface with
  kernel-based active subspaces and Gaussian process regression.}
\label{fig:math_graph}
\end{figure}

Keeping the notations of section~\ref{def:hp}, $\X:(\Omega,
\mathcal{F}, P) \to \R^m$ is the absolutely continuous
random vector representing the $m$-dimensional inputs with density
$\rho:\mathcal{X} \subset \R^m \to \R$, and
$f:\mathcal{X} \subset \R^m \to (V, R_V)$ is the
model function that we assume to be continuously differentiable and
Lipschitz continuous.

One drawback of sufficient dimension reduction with AS applied to
ridge approximation is that if a clear linear trend is missing,
projecting the inputs as $P_r \X$ represents a loss of accuracy on the
approximation of the model $f$ that may not be compensated even by
the choice of the optimal profile
$\Tilde{h}\circ P_{r}=\mathbb{E}_{\rho} [f | \sigma(P_r)]$.
In order to overcome this,
non-linear dimension reduction to one-dimensional parameter space
could be achieved discovering a curve in the space of parameters that
cuts transversely the level sets of $f$, this variation is presented
in~\cite{Bridges2019ActiveMA} as Active Manifold. Another approach could
consist in finding a diffeomorphism $\phi$ that reshapes the level
sets such that subsequently applying AS dimension reduction to the new
model function~$\Tilde{f}\circ\phi=f$
could be more profitable:
\[
  \begin{tikzcd}
    \mathcal{X}\subset\mathbb{R}^m \arrow{r}{\phi}
    \arrow[swap]{dr}{f} & \phi(\mathcal{X})\subset\mathbb{R}^m
    \arrow{d}{\Tilde{f}} \\
     & V
  \end{tikzcd}
\]

Unfortunately constructing the Active Manifold or finding the right
diffeomorphism $\phi$ could be a complicated matter. If we renounce to
have a backward map and we weaken the bond of the method with the
model, we can consider an immersion $\phi$ from the space of
parameters $\mathcal{X}$ to an infinite-dimensional Hilbert space
$\mathbb{H}$ obtaining
\[
  \begin{tikzcd}
    \mathcal{X}\subset\mathbb{R}^m \arrow{r}{\phi}
    \arrow[swap]{dr}{f} & \phi(\mathcal{X})\subset\mathbb{H}
    \arrow{d}{\Tilde{f}} \\
     & V
  \end{tikzcd}
\]
This is a common procedure in machine learning in order to increase
the number of features~\cite{williams2006gaussian}. Then AS is applied
to the new model function
$\Tilde{f}:\phi(\mathcal{X})\subset\mathbb{H}\rightarrow V$ with
parameter space $\phi(\mathcal{X})\subset\mathbb{H}$. A response surface can be built with \autoref{algo:response_surface} remembering to replace every occurrence of the inputs $\x$ with their images $\phi(\x)$. A synthetic scheme of the procedure is represented in \autoref{fig:math_graph}.

% In this case the AS method may be theoretically formulated as an
% application of the spectral decomposition to the integral operator
% $T_\mathcal{H}:\mathbb{H}\rightarrow\mathbb{H}$
% \begin{equation*}
% \mathcal{H}=\int_{\phi(\mathcal{X})} (R_V\circ
% d_{\z}\Tilde{f}(\z))\otimes
% d_{\z}\Tilde{f}(\z) \,d\mu(\z),
% \end{equation*}
% since it is adjoint and compact.

In practice we consider a discretization of the infinite-dimensional
Hilbert space $\R^D \simeq \mathbb{H}$ with $D>m$. Dimension reduction with AS results in the choice of a $r$-rank
projection in the much broader set of $r$-rank projections in
$\mathbb{H}$.

Since for AS only the samples of the
Jacobian matrix of the model function are employed, we can ignore the
definition of the new map
$\Tilde{f}: \phi(\mathcal{X})\subset\mathbb{H}\rightarrow (V, R_V)$
and focus only on the computation of the Jacobian matrix of
$\Tilde{f}$ with respect to the new input variable
$\z:=\phi(\x)$. The uncentered covariance
matrix becomes
\RA{
\begin{align*}
H &= \int_{\phi(\mathcal{X})} \left [
                      (D_{\z}\Tilde{f})^T(\z) \right
                      ] R_V \left [
                      (D_{\z}\Tilde{f})(\z) \right ] \,
                      d\mu(\z)\\
&= \int_{\mathcal{X}} \left [
                                            (D_{\z}\Tilde{f})^T(\phi(\x))
                                            \right ] R_V \left [
                                            (D_{\z}\Tilde{f})(\phi(\x))
                                            \right ] \,d\mathcal{L}_{\X}(\x),
\end{align*}}
where $\mu:=\phi_{\#}(\mathcal{L}_{\X})$ is the pushforward
probability measure of $\mathcal{L}_{\X}$ (the law of
probability of $\X$) with respect to the map $\phi$. Simple
Monte Carlo can be applied sampling from the distribution $\rho$ in
the input space $\mathcal{X}$
\RA{
\begin{align*}
H &= \int_{\mathcal{X}} \left [
                      (D_{\z}\Tilde{f})^T(\phi(\x))
                      \right ] R_V \left [
                      (D_{\z}\Tilde{f})(\phi(\x))
                      \right ] \,d\mathcal{L}_{\X}(\x)\\
&\approx\frac{1}{M}\sum_{i=1}^M \left [(D_{\z}\Tilde{f})^T(\phi(\x_i))
  \right ]  R_V \left [ (D_{\z}\Tilde{f})(\phi(\x_i)) \right ] .
\end{align*}}
The gradients of $\Tilde{f}$ with respect to the new input variable
$\Z$ are computed from the known values $D_{\x}f$ with
the chain rule.

%  \label{rmk:regularity}
The application of the chain rule to the composition of
functions~$\Tilde{f}\circ\phi:\mathbb{R}^m\rightarrow\mathbb{H}\rightarrow
V$ is applicable if $\Tilde{f}$ is defined in an open set
$U\supset\phi(\mathcal{X})$.
If $\phi$ is non singular and also injective the new input space is a
$m$-dimensional submanifold of $\mathbb{H}$. If $\phi$ is also smooth
there exists a smooth extension of
$\Tilde{f}:\phi(\mathcal{X})\subset\mathbb{H}\rightarrow V$ onto the whole
domain $\mathbb{H}$,
see Proposition 1.36 from~\cite{warner1983foundations}.

If the Hilbert space $\mathbb{H}$ has finite dimension $\mathbb{H}\sim\mathbb{R}^{D}$ this procedure leaves us with an underdetermined linear system to solve
for~$D_{\z}\Tilde{f}$
\begin{align}
\label{eq:pseudo_jac}
&D_{\z}\Tilde{f}(\phi(\x))D\phi(\x)=D_{\x}f(\x),
  \nonumber\\
&D_{\z}\Tilde{f}(\phi(\x))=D_{\x}f(\x)(D\phi(\x))^{\dagger},
\end{align}
where $^{\dagger}$ stands for the right Moore-Penrose inverse of the matrix $D\phi(\x)$ with rank $r$, that is
\begin{equation*}
(D\phi(\x))^{\dagger}=V \Sigma^{\dagger}U^{T},
\end{equation*}
with the usual notation for the singular value decomposition (SVD) of $D\phi(\x)$
\begin{equation}
D\phi(\x) = U \Sigma V^{T},
\end{equation}
and $\Sigma^{\dagger}\in\mathcal{M}(r\times r)$ equal to the diagonal matrix
with the inverse of the singular values as diagonal elements. \RC{As anticipated if
$f$ is smooth enough and $\phi$ is an embedding, so that $D\phi$ has full rank, the previous system has an
unique solution. The most crucial part is the evaluation of the gradients
$D_{\x}f(\x)$ from the input output couples, when they are not available
analytically or from the adjoint method applied to PDEs models:
different approaches are present in the literature, like local polynomial regressions and
Gaussian process regression on the whole domain to approximate the gradients;
both are available in the ATHENA package~\cite{romor2020athena}. For an estimate
of the ridge approximation error due to inaccurate gradients see~\cite{constantine2015active}.}

Finally, we remark that in the AS method we approximate the random variable $\X$ as
\begin{equation}
P_r \X = \mathbf{v}_1(\mathbf{v}_1\cdot \X)+\dots+\mathbf{v}_r
(\mathbf{v}_r\cdot \X),
\end{equation}
with $\{\mathbf{v}_i\} \subset \R^m$ the active
eigenvectors, whereas with KAS the reduced input space is contained in
$\mathcal{H}$
\begin{equation}
P_r \X = \mathbf{v}_1 (\mathbf{v}_1 \cdot \phi(\X))+\dots+\mathbf{v}_r
(\mathbf{v}_r \cdot \phi(\X)),
\end{equation}
with $\{\mathbf{v}_i\} \subset \mathcal{H}$ the active eigenvectors of
KAS. In this case the model is enriched by the non-linear feature map
$\phi$.

\begin{algorithm}
  \caption{Kernel-based active subspace computation.}\label{algo:KAS}

  \begin{algorithmic}[1]
\Require gradients dataset $dY=(dy_1, \dots, dy_M)^T,\
  dy_i\in\mathcal{M}(d\times m)$
\Require symmetric positive definite metric matrix $R_V \in \mathcal{M} (d\times d)$
\Require feature subspace dimension $D$
\Require feature map $\phi:\R^m \rightarrow \R^D$
\Require active subspace dimension $r$

\State Evaluate gradients solving an overdetermined linear system:
\begin{equation*}
\forall j\in\{1,\dots,M\},\quad dY[j,:, :] (D\phi)^{\dagger} =
d\Tilde{Y}[j, :, :] \in\mathcal{M}(d, D).
\end{equation*}
\State Compute the uncentered covariance matrix with Monte Carlo:
\begin{equation*}
\Tilde{H}=\frac{1}{M}\sum^M_{k=1} d\Tilde{Y}[k, :, :]^T R_V d\Tilde{Y}[k, :, :].
\end{equation*}
\State Solve the eigenvalue problem:
$\Tilde{H}\mathbf{v}_i=\lambda_i\mathbf{v}_i,\quad \forall i\in\{1,\dots,D\}$.

\State \Return active eigenvectors $W_1 = (\mathbf{v}_1,\dots,
  \mathbf{v}_r)$ and inactive eigenvectors $W_2 = (\mathbf{v}_{r+1},\dots,
  \mathbf{v}_D)$ with $\mathbf{v}_i \in \R^D$, and ordered eigenvalues $(\lambda_1,\dots,\lambda_D)$
\end{algorithmic}
\end{algorithm}

\subsection{Choice of the Feature Map}
The choice for the map $\phi$ is linked to the theory of Reproducing
Kernel Hilbert Spaces (RKHS)~\cite{berlinet2011reproducing}, and it is
defined as
\begin{gather}
\label{eq:feature_map}
\z=\phi(\x)=\sqrt{\frac{2}{D}}\,\sigma_f\,\cos(W\x+\mathbf{b}),\\
\cos(W\x+\mathbf{b}):=\frac{1}{\sqrt{D}}(\cos(W[1, :] \cdot \x +b_1),\dots, \cos(W[D, :] \cdot \x +b_D))^T
\end{gather}
where $\sigma_f$ is an hyperparameter corresponding to the empirical
variance of the model, \RB{$W\in \mathcal{M}(D\times m)$} is the
projection matrix whose rows are sampled from a probability
distribution $\mu$ on $\R^m$ and $\mathbf{b}\in \R^D$ is a bias term whose components are
sampled independently and uniformly in the interval $[0, 2\pi]$. We
remark that its Jacobian can be computed analytically as
\begin{equation}
\label{eq:feature_map_grad}
\frac{\partial z^j}{\partial
  x^i}=-\sqrt{\frac{2}{D}}\,\sigma_f\,\sin\left(\sum_{k=1}^D W_{ik}\x_k+\mathbf{b}_k\right)\,W_{ij},
\end{equation}
for all $i \in \{1, \dots, m\}$, and for all $ j \in \{1, \dots, D\}$.

We remark that in order to guarantee the correctness of the procedure for evaluating
the gradients we have to prove that the feature map is injective and
non singular. In general however the feature map~\eqref{eq:feature_map} cannot
not be injective due to the periodicity of the cosine but at least it is almost
surely non singular if the dimension of the feature space is high enough.

The feature map~\eqref{eq:feature_map} is not the only effective
immersion that provides a kernel-based extension of the active
subspaces. For example an alternative is the following composition of
a linear map with a sigmoid
\begin{equation*}
\phi(\z)=\frac{C}{1+\alpha \, e^{ -W\z }},
\end{equation*}
where $C$ is a constant, $\alpha$ is an hyperparameter to be tuned, and
$W \in \mathcal{M} (D, m)$ is, as before, a matrix whose rows are
sampled from a probability distribution on $\R^m$.

Other choices involve the use of Deep Neural Networks to learn the
profile $h$ and the projection function $P_r$ of the ridge approximation
problem~\cite{tripathy2019deep}.

The tuning of the hyperparameters of the spectral measure consists
in a global optimization problem where the dimension of the domain can
vary between $1$ and the dimension of the input space $m$. The object
function to optimize is the relative root mean square error (RRMSE)
\begin{equation}
\label{eq:RRMSE_discrete}
\text{RRMSE}(Y_{\text{test}},
T_{\text{test}}) = \sqrt{\frac{\sum^{N}_{i=1}(t_{i}-y_{i})^{2}}{\sum^{N}_{i=1}(t_{i}-\Bar{y})^{2}}},
\end{equation}
where $T_{\text{test}}=(t_{i})_{i\in \{1, \dots, N\}}$ are the
predictions obtained from the response surface built
with KAS and associated
to the test set, $Y_{\text{test}}=(y_{i})_{i\in \{1, \dots, N\}}$ are the targets
associated to the test set, and $\Bar{y}$ is the mean value of the
targets. We implemented a logarithmic grid-search\RC{, see Algorithm~\ref{algo: tuning},} making use of
the SciPy library~\cite{2020SciPy-NMeth}. Another choice could be
Bayesian stochastic optimization implemented in the open-source library GPyOpt~\cite{gpyopt2016}.

\RB{The tuning of the hyperparameters of the spectral measure chosen is
the most computationally expensive part of the procedure. We report the
computational complexity of the  algorithms introduced to have a better understanding of
the additional cost implied by the implementation of response
surface design with KAS. Let us assume that the number of random Fourier features $D$, the
number of input, output, and gradient samples $M$, and the dimension of the
parameter space $m$, are ordered in this manner $D>M>m$, as is usually the case,
and that the quantity of interest $f$
is a scalar function. The cost of computing an active subspace is $O(Mm^2)$, that is the cost of the
SVD of the gradients matrix
$dY$ used to get the active and inactive eigenvectors in
Algorithm~\ref{algo:AS}. The cost of the training of a response surface with
Gaussian process regression in Algorithm~\ref{algo:response_surface} depends on
the cost of minimization of the log-likelihood: each evaluation of the
log-likelihood involves the
computation of the determinant and the inverse of the regularized Gram matrix $K(\theta)+\sigma
I_M$, that is $O(M^3)$. Finally, the cost for the evaluation of the
kernel-based active subspace is associated to the SVD of $d\Tilde{Y}$ that is $O(DM^2)$ in
Algorithm~\ref{algo:KAS}, and to the resolution of the overdetermined linear system to
obtain the gradients $d\Tilde{Y}$, that is $M$ times $O(Dm^2)$ since it is related to the evaluation of the pseudo-inverse of $D\phi$. So, the
computational complexity for the response surface design with AS and GPR is
$O(n_{\text{GPR}}M^3)$, while for the response surface design with KAS and GPR is
$O\left(n_{\text{grid-search}}
n\left(D\frac{M^2}{n^2}+\frac{M}{n}Dm^2+n_{\text{GPR}}\frac{M^3}{n^3}\right)\right)$, where
$n_{\text{GPR}}$ is the maximum number of steps of the
optimization algorithm used to minimize the log-likelihood,
$n_{\text{grid-search}}$ is the number of hyperparameter instances $\gamma\in
G$ to try in Algorithm~\ref{algo: tuning}, and $n$ is the number of batches in the
$n$-fold cross validation procedure. In particular, for each grid search
hyperparameter the main cost is associated to the GPR training since
$n_{\text{GPR}}$ usually satisfy $Dn<n_{\text{GPR}}M$, when the optimizer chosen
is L-BFGS-B from SciPy~\cite{2020SciPy-NMeth}, accounting also for the number of restarts of the
optimizer: in the numerical tests we performed the number of restarts of the training of
the GPR is problem-dependent but always less than $10$. In
general, the number $n_{\text{grid-search}}$ depends on the chosen application,
and the multiplicative factor between the computational complexity of the
response surface design procedure with KAS or AS is lower than $3n_{\text{grid-search}}n$.}

\begin{algorithm}
\caption{Tuning the feature map with logarithmic grid-search.}\label{algo: tuning}

  \begin{algorithmic}[1]
\Require normalized input dataset $X=(\mathbf{x}_{1}, \dots,
  \x_{M})^{T},\ \x_{i}\in\mathbb{R}^{m}$
\Require output dataset  $Y=(y_1,  \dots, y_M)^T,\ y_i\in\R$
\Require gradients dataset $dY=(dy_{1}, \dots, dy_{M})^{T},\
  dy_{i}\in\mathcal{M}(d\times D)$
\Require spd metric matrix
  $R_{V}\in \mathcal{M}(d\times d)$
\Require feature subspace dimension $D$
\Require feature map $\phi:\mathbb{R}^{m}\rightarrow\mathbb{R}^{D}$
\Require spectral density with hyperparameter $\alpha$, $\mu=\mu(\alpha)$
\Require active subspace dimension $r$
\Require tolerance for the tuning procedure $\text{tol}\approx 0.8$.

\State Create the grid $G$ and set the variable BEST to $1$.
\For{$\gamma\in G$}
\State Compute the feature map projection matrix $W$ associated to $\gamma$: $W[i, :] \text{ sampled from } \mu(\gamma),\quad \forall
i\in\{1,\dots, D\}$.
\State Compute the uniformly sampled bias $b$: $b[i] \sim \mathcal{U}(0, 2\pi)$.
\State Compute score with $n$-fold cross validation:
\For{$i= 1$ to $n$}
\State Divide input, output, and gradients in train and test datasets.
\State Compute $(W_1, W_2, (\lambda_1,\dots, \lambda_D))$ with KAS method in
\autoref{algo:KAS} with inputs ($dY_{\text{train}}, R_V, D, \phi,
r)$.
\State Build GPR response surface with inputs ($X_{\text{train}},
  Y_{\text{train}}, W_1, k$) using \autoref{algo:response_surface}.
\State Predict the values $T_{\text{test}}$ using \autoref{algo:gp_pred} with input $X_{\text{test}}$.
\State Evaluate the score as $\text{score}[n]=\text{RRMSE}(Y_{\text{test}},
T_{\text{test}})$.

\If{$\text{score}[n] > \text{tol}$}
\State Stop cross validation and pass to the next value of $\alpha$.
\EndIf

\EndFor

\If{$\text{mean(score)} <$ BEST}
\State Save $W$ and $b$, and set BEST to $\text{mean(score)}$.
\EndIf

\EndFor

\State \Return projection matrix $W$, and bias $b$
\end{algorithmic}
\end{algorithm}

\subsection{Random Fourier Features}
The motivation behind the choice for this map from Equation~\eqref{eq:feature_map}
comes from the theory on Reproducing Kernel Hilbert Spaces. The infinite-dimensional Hilbert
space $(\mathbb{H}, \langle\cdot,\cdot\rangle)$ is assumed to be a
RKHS with real shift-invariant kernel
$k:\mathcal{X}\times\mathcal{X} \to \mathbb{R}$ with $k(0)=1$
and feature map $\phi$.

In order to get a discrete approximation of
$\phi:\mathcal{X} \subset \mathbb{R}^m \to \mathbb{H}$, random
Fourier features are employed~\cite{rahimi2008random,
  li2019a}. Bochner's theorem \cite{mohri2018foundations} guarantees the existence of a
spectral probability measure $\mu$ such that
\begin{equation*}
  \label{eq:Bochner's theorem}
k(\mathbf{x}, \mathbf{y}) = \int_{\mathbb{R}^m} e^{i\boldsymbol{\omega}\cdot(\mathbf{x}-\mathbf{y})}\,d\mu(\boldsymbol{\omega}).
\end{equation*}
From this identity we can get a discrete approximation of the scalar
product $\langle\cdot,\cdot\rangle$ with Monte Carlo method,
exploiting the fact that the kernel is real
\begin{gather}
\label{eq:rff_approx}
\langle\phi(\x),\phi(\y) \rangle=k(\x,
\y)\approx
\frac{1}{D}\sum_{i=1}^D\cos(\boldsymbol{\omega}_i\cdot\x + b_i)
\cos(\boldsymbol{\omega}_i \cdot \y +b_i)=\z^T\z, \\
\z=\frac{1}{\sqrt{D}}(\cos(\boldsymbol{\omega}_1 \cdot \x +b_1),\dots, \cos(\boldsymbol{\omega}_D \cdot \x +b_D)),
\end{gather}
and from this relation we obtain the approximation
$\phi\approx\mathbf{z}$. The sampled vectors
$\{\boldsymbol{\omega}_i\}_{i=1,\dots,D}$ are called random Fourier
features. The scalars $\{b_i\}_{i=1,\dots,D}$ are bias terms
introduced since in the approximation we have excluded some trigonometric terms from the following initial expression
\begin{equation*}
\frac{1}{D}\sum_{i=1}^D\left( \cos(\boldsymbol{\omega}_i \cdot \mathbf{x})
\cos(\boldsymbol{\omega}_i \cdot \mathbf{y})-\sin(\boldsymbol{\omega}_i \cdot \mathbf{x})
\sin(\boldsymbol{\omega}_i \cdot \mathbf{y})\right).
\end{equation*}

Random Fourier features are frequently used to approximate
kernels. We consider only spectral probability measures which have a
probability density, usually named spectral density. In the approximation of the
kernel with random Fourier features, under some
regularity conditions on the kernel, an explicit
probabilistic bound depending on the dimension of the feature space
$D$ can be proved~\cite{mohri2018foundations}.  This technique is used to scale up Kernel Principal
Component Analysis~\cite{scholkopf2002learning, scholkopf1998nonlinear} and Supervised Kernel Principal Component
Analysis~\cite{barshan2011supervised}, but in the case of kernel-based AS
the resulting overdetermined linear system employed to compute the
Jacobian matrix of the new model function increases in dimension
instead.

The most famous kernel is the squared exponential kernel also called Radial Basis Function kernel (RBF)
    \begin{equation}
        k_{\text{RBF}}(\x, \mathbf{y})=\exp\left( -\frac{\lVert\x-\mathbf{y}\rVert^2}{2l^2}\right),
    \end{equation}
    where $l$ is the characteristic length-scale. The spectral density
    is Gaussian $\mathcal{N}(0,\ 1/4\pi^2l^2)$:
    \begin{equation}
        S(\boldsymbol{\omega})=(2\pi l^2)^{D/2}\text{exp}(-2\pi^2l^2\boldsymbol{\omega}^2).
    \end{equation}

Thanks to Bochner's theorem to every probability distribution that
admits a probability density function corresponds a stationary
positive definite kernel. So having in mind the
definition of the feature map $\phi$ from
Equation~\eqref{eq:feature_map}, we can choose any probability
distribution for sampling the
random projection matrix \RB{$W\in\mathcal{M}(D\times m)$}
without focusing on the corresponding kernel since it is not needed by
the numerical procedure.

After the choice of the spectral measure the corresponding
hyperparameters have to be tuned. This is linked to the choice of the
hypothesis models in machine learning and it is usually carried out for
the hyperparameters of the employed kernel. From the choice of the
kernel and the corresponding hyperparameters some regularity
properties of the model are implicitly
assumed~\cite{williams2006gaussian}.

\section{Benchmark test problems}
\label{sec:test}
In this section we are going to present some benchmarks to prove
the potential gain of KAS over standard linear AS, for both scalar and
vectorial model functions. In particular we test KAS on radial
symmetric functions, with $2$-dimensional and $8$-dimensional
parameter spaces, on the approximation of the reproduction number
$R_0$ of the SEIR model, and finally on a vectorial output function
that is the solution of a Poisson problem.

One dimensional response surfaces are built following the algorithm described in
\autoref{ssec:response_surfaces}. The tuning of the hyperparameters
of the feature map is carried out with a logarithmic grid-search and
$5$-fold cross validation for the Ebola test case, while for the other
cases we employed Bayesian stochastic optimization implemented
in~\cite{gpy2014} with $3$-fold cross validation. The score function
chosen is the Relative Root Mean Square Error (RRMSE). \RC{The spectral measure
for each test case is chosen by brute force among the Laplace, Gaussian, Beta
and multivariate Gaussian distributions. The number of Fourier features is not
established based on a criterion but we have seen experimentally that above a certain
threshold the number of features is high enough to at least reproduce the accuracy of
the AS method. Since the most sensitive part to the final accuracy of the
response surface is the tuning
of the hyperparameters of the spectral measures, we suggest to choose an
affordable number of features between 1000 and 2000, and focus on the tuning of
said hyperparameters instead.}\RB{We remark that the number of samples employed
is problem dependent: some heuristics to determine it can be found
in~\cite{constantine2015active}, but the crucial point is that additional
training samples with respect to the ones used for the AS method are not needed for the
novel KAS method.} \RC{Moreover, the CPU time for the hyperparameters tuning
procedure is usually negligible with respect to the time required to obtain
input-output pairs from the numerical simulation of PDEs models: in our
applications the tuning procedure's computational time is in the order of
minutes (usually around 10-15 minutes for most testcases), while for
the CFD application of Section~\ref{sec:results} it is in the order of days and
for the stochastic elliptic partial differential equation of
Subsection~\ref{ssec:spde} it is in the order of hours. We also remark that the
tuning Algorithm~\ref{algo: tuning}, the GPR training restarts, and the choice
of the spectral measure can be easily
parallelized.}

For the radial
symmetric and Ebola test cases the inputs are sampled from a uniform
distribution with problem dependent ranges.For the stochastic
elliptic partial differential case the inputs are the coefficients of
a Karhunen-Lo\`eve expansion and are sampled from a normal
distribution. All the computations regarding AS and KAS are done using
the open source Python package called ATHENA~\cite{romor2020athena}.

\subsection{Radial symmetric functions}
Radial symmetric functions represent a class of model functions for
which AS is not able to unveil any low dimensional behaviour. In fact
for these functions any rotation of the parameter space produce the
same model representation. Instead Kernel-based AS is able to overcome
this problem thanks to the mapping onto the feature space.

We present two benchmarks: an $8$-dimensional hyperparaboloid defined as
\begin{equation}
\label{eq:paraboloid}
f:[-1, 1]^8 \subset \R^8 \rightarrow \R , \qquad \qquad
f(\x) = \frac{1}{2} \lVert \x \rVert^2 ,
\end{equation}
and the surface of revolution in $\R^3$ with generatrix $g(x)=
\sin(x^2)$
\begin{equation}
\label{eq:sine}
f:[-3, 3]^2 \subset \R^2 \rightarrow \R , \qquad \qquad
f(\x) = g(\lVert \x \rVert)=\sin(\lVert \x \rVert^2) .
\end{equation}
The gradients are computed analytically.

For the hyperparaboloid we use $N_s = 500$ independent, uniformly
distributed training samples in $[-1, 1]^8$, while for the sine case the
training samples are $N_s = 800$ in $[-3, 3]^2$. In both cases the test samples are $500$. The feature space has dimension
$1000$ for both the first and the second case. The spectral
distribution chosen is the multivariate normal with
hyperparameter a uniform variance $\lambda I_{d}$, and a product of
Laplace distributions with $\gamma$ and $b$ as
hyperparameters, respectively. The tuning is carried out with $3$-fold
cross validation. The results are summarized in \autoref{tab:res_tests}.

\begin{table}[htp!]
\centering
\caption{Performance results for AS and KAS methods. For each case we
  report the parameter space dimension, the number of samples $N_s$
  used for the training, the chosen distribution, the dimension of
  the feature space, and the RRMSE mean and standard deviation for AS
  and KAS. In bold the best results.\label{tab:res_tests}}
%\footnotesize
\begin{tabular}{ l c c c c c c }
\hline
\hline
\multirow{2}{*}{Case} & \multirow{2}{*}{Dim} & \multirow{2}{*}{$N_s$}
  & Spectral & Feature & \multirow{2}{*}{RRMSE AS} & \multirow{2}{*}{RRMSE KAS} \\
  &  &  & distribution & space dim &  &  \\
\hline
\hline
\rowcolor{Gray}
Hyperparaboloid  & 8 & 500 & $\mathcal{N}(\mathbf{0}, \lambda I_{d})$ & 1000 & 0.98 $\pm$ 0.03 & \textbf{0.23} $\pm$ 0.02 \\
Sine & 2 & 800 & $\text{Laplace}(\gamma, b)$ & 1000 & 1.011 $\pm$ 0.01 & \textbf{0.31} $\pm$ 0.06 \\
\rowcolor{Gray}
Ebola & 8 & 800 & $\text{Beta}(\alpha, \beta)$ & 1000 & 0.46 $\pm$ 0.31 & \textbf{0.31} $\pm$ 0.03 \\
SPDE~\eqref{eq:spde_1}  & 10 & 1000 & $\mathcal{N}(\mathbf{0}, \Sigma)$ & 1500 & 0.611 $\pm$ 0.001 & \textbf{0.515} $\pm$ 0.013 \\
\hline
\hline
\end{tabular}
\end{table}

Looking at the eigenvalues of the uncentered covariance matrix of the
gradients $\Tilde{H}$ for the hyperparaboloid case
in \autoref{fig:evals_hyperparaboloid},
we can clearly see how the decay for AS is almost absent, while using
KAS the decay after the first eigenvalue is pronounced, suggesting the
presence of a kernel-based active subspace of dimension $1$.

\begin{figure}[ht!]
\centering
\includegraphics[width=.49\textwidth]{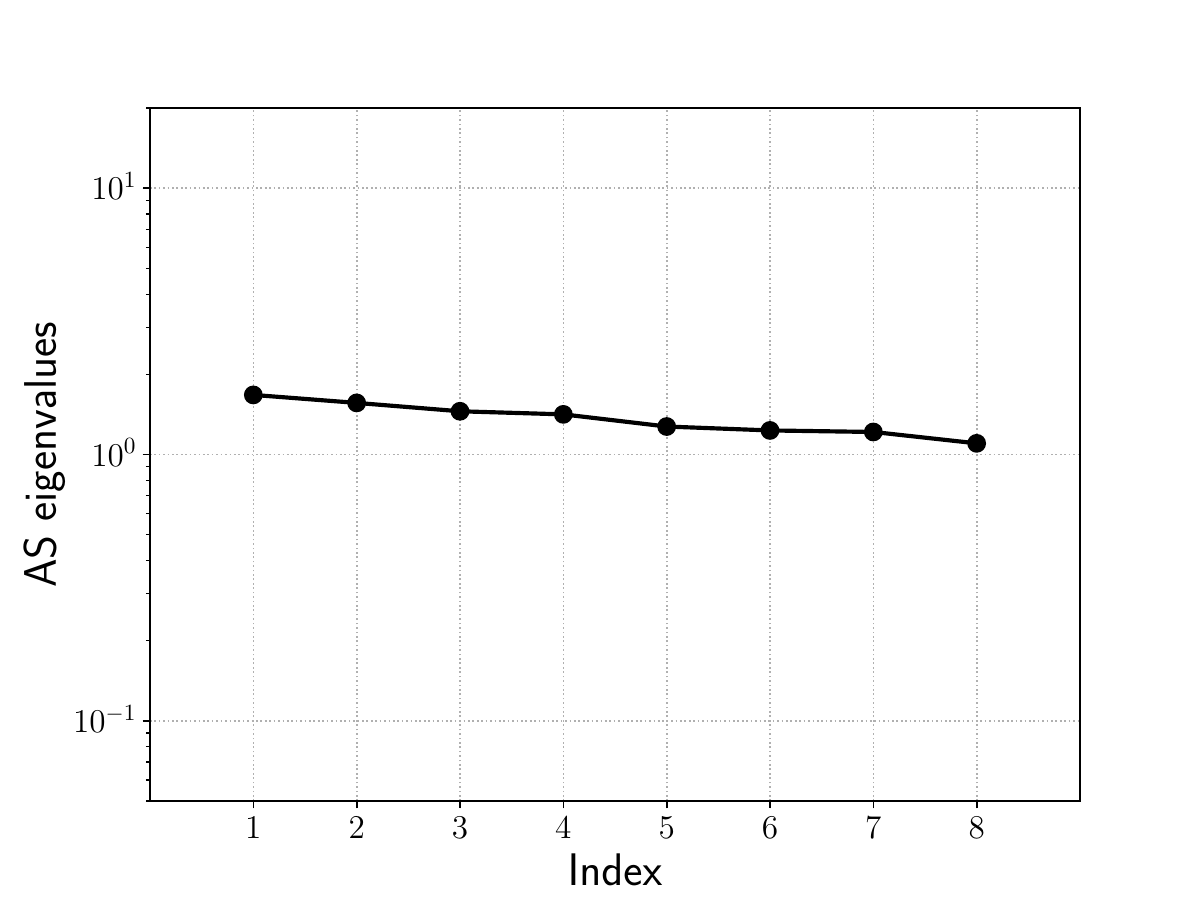}\hfill
\includegraphics[width=.49\textwidth]{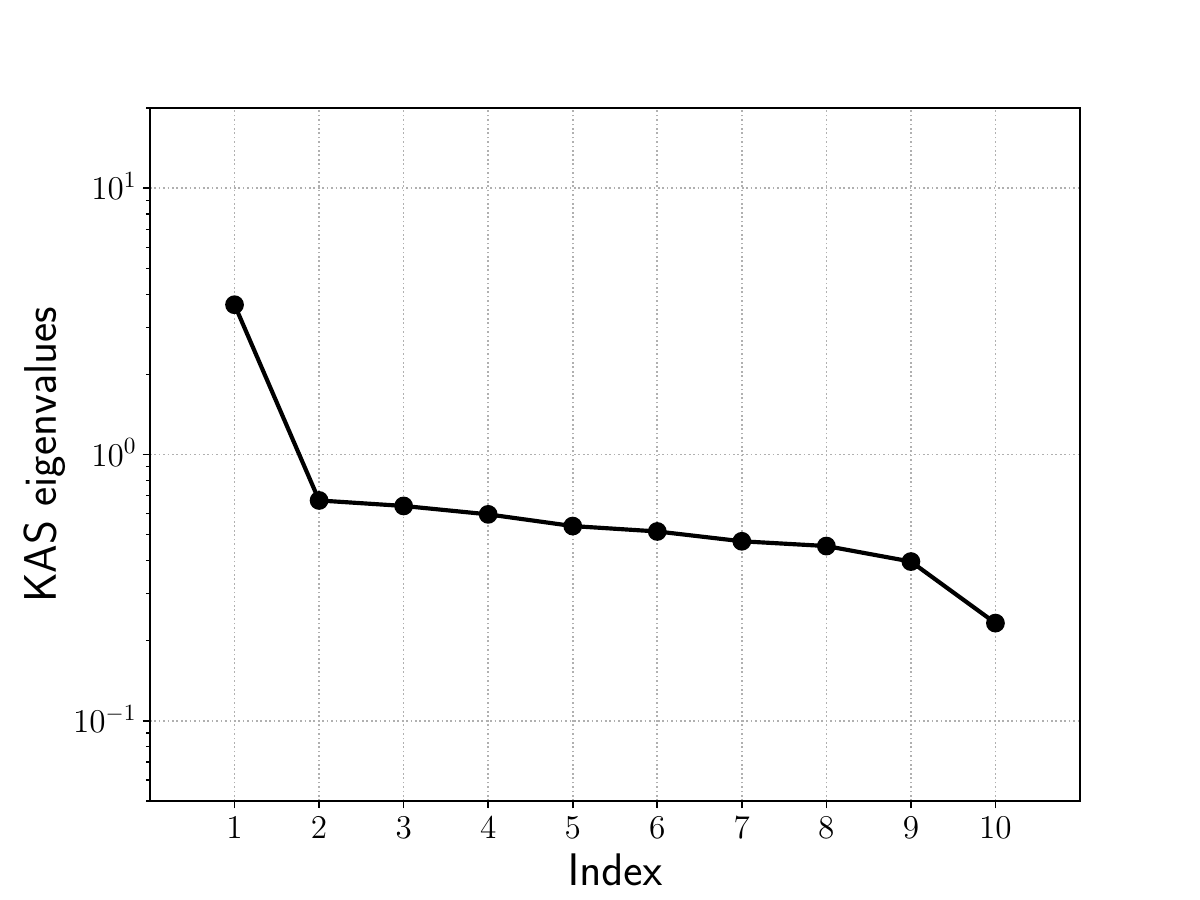}\\
\caption{\RB{Eigenvalues of the covariance matrix
  $\Tilde{H}\in\mathbb{R}^{8\times 8}$ applied to
  the hyperparaboloid case for the AS procedure on the left, and the first $10$ eigenvalues of the
  covariance matrix $\Tilde{H}\in\mathbb{R}^{1000\times 1000}$ for the
  KAS procedure applied to the same case on the right.}}
\label{fig:evals_hyperparaboloid}
\end{figure}

The one-dimensional sufficient summary plots, which are $f(\x)$
against $W_1^T \x$ --- in the AS case --- or against $W_1^T \phi
(\x)$ --- in the KAS case ---, are shown in \autoref{pics:hyperparaboloid} and
\autoref{pics:sin}, respectively. On the left panels we present the
Gaussian process
response surfaces obtained from the active subspaces reduction, while
on the right panels the ones obtained with the kernel-based AS
extension. As we can see AS fails to properly reduce the parameter
spaces, since there are no preferred directions over which the model
functions vary the most. The KAS approach, on the contrary, is able to
unveil the corresponding generatrices. This results in a reduction of the RMS by
a factor of at least $3$ (see \autoref{tab:res_tests}).

\begin{figure}[ht!]
\centering
\includegraphics[width=.49\textwidth]{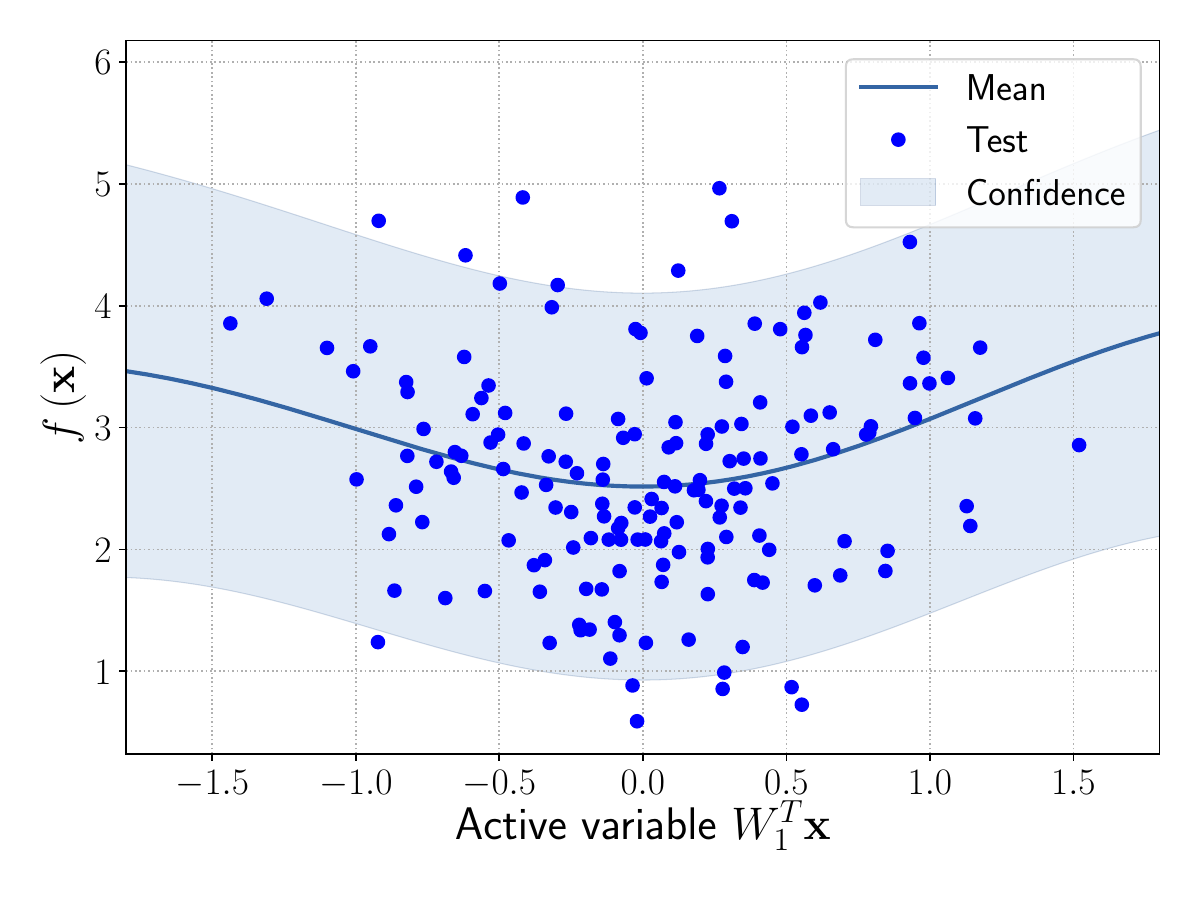}\hfill
\includegraphics[width=.49\textwidth]{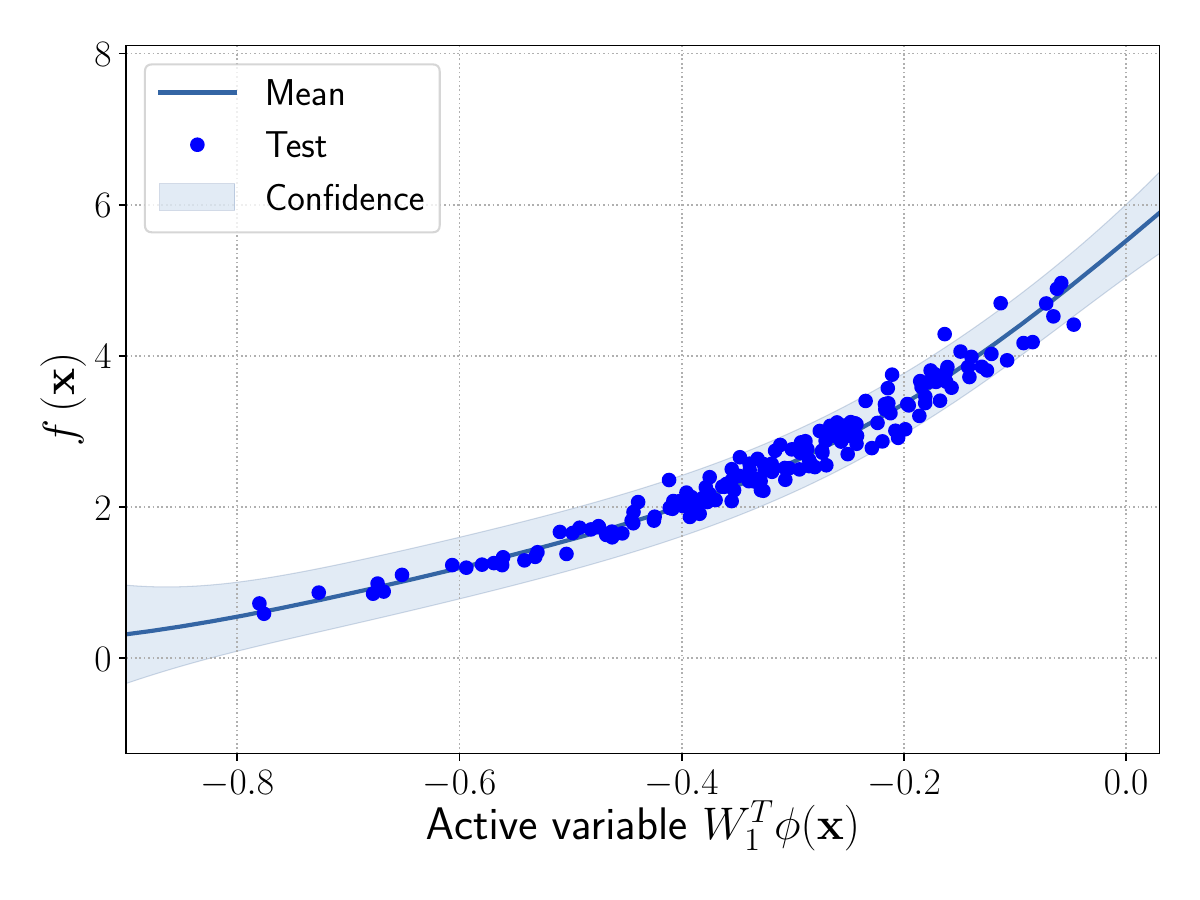}\\
\caption{Comparison between the sufficiency summary plots obtained from
  the application of AS and KAS methods for the hyperparaboloid model
  function with domain $[-1,1]^8$, defined in \autoref{eq:paraboloid}. The left plot refers to AS, the
  right plot to KAS. With the blue solid line we depict the posterior mean of
  the GP, with the shadow area the $68\%$ confidence intervals,
  and with the blue dots the testing points.}
\label{pics:hyperparaboloid}
\end{figure}

\begin{figure}[ht!]
\centering
\includegraphics[width=.49\textwidth]{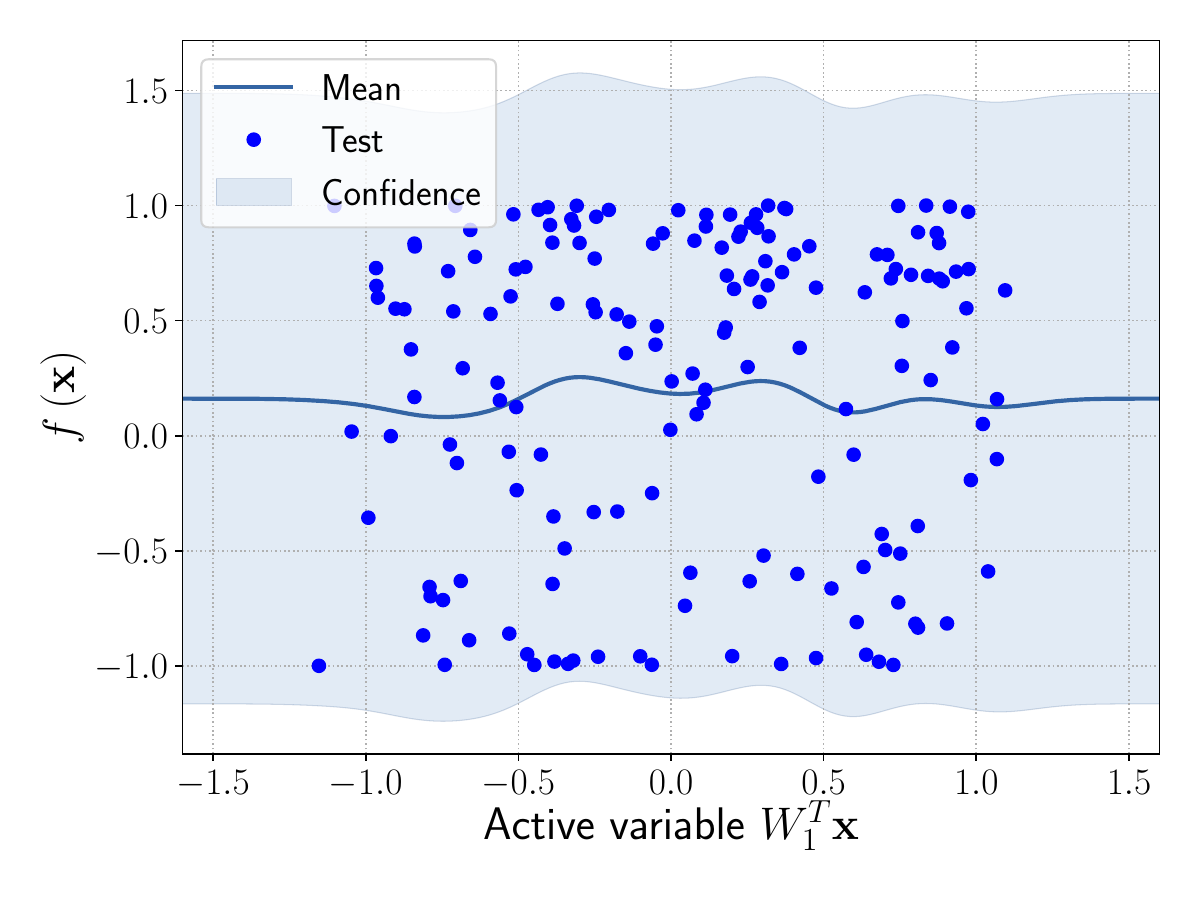}\hfill
\includegraphics[width=.49\textwidth]{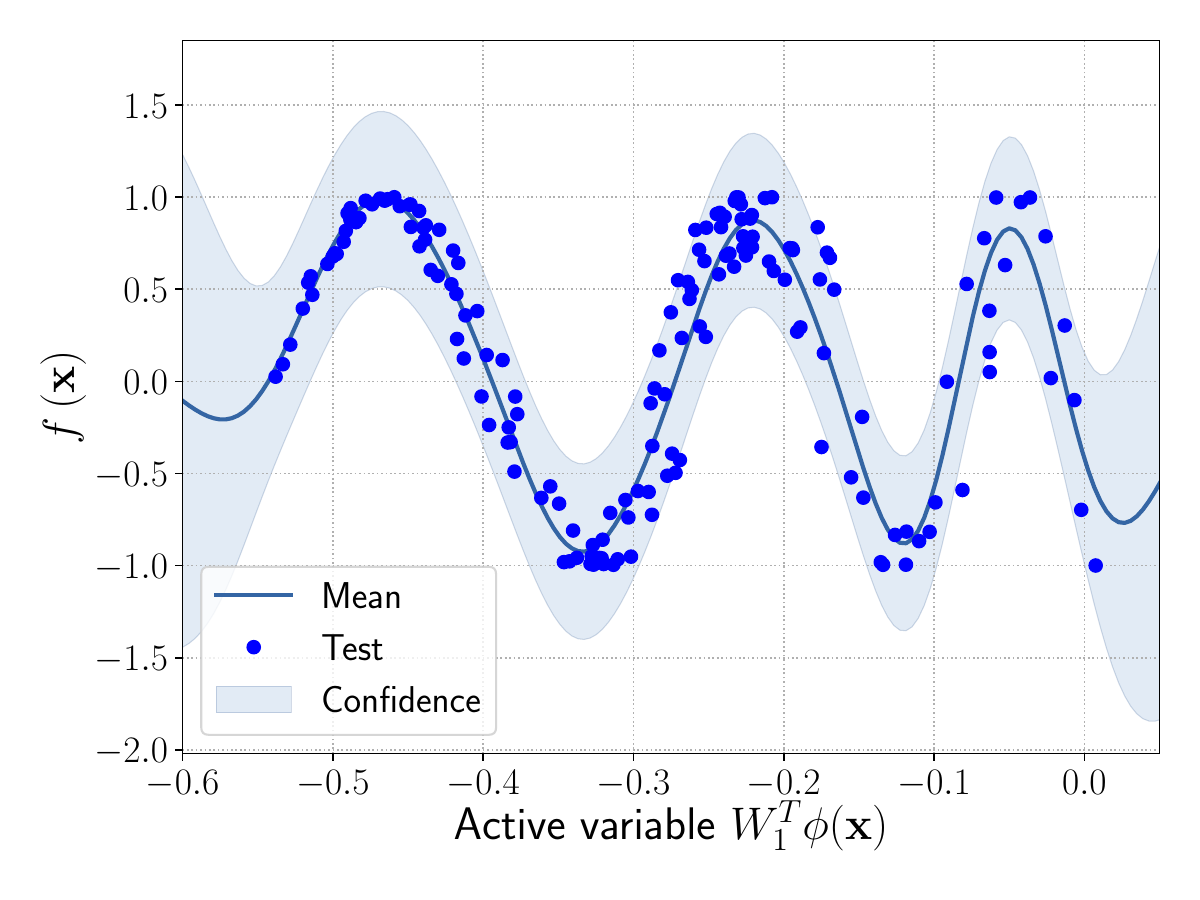}\\
\caption{Comparison between the sufficiency summary plots obtained from
  the application of AS and KAS methods for the surface
  of revolution model function with domain $[-3,3]^2$, defined
  in \autoref{eq:sine}. The left plot refers to AS, the
  right plot to KAS. With the blue solid line we depict the posterior mean of
  the GP, with the shadow area the $68\%$ confidence intervals,
  and with the blue dots the testing points.}
\label{pics:sin}
\end{figure}

\subsection{SEIR model for the spread of Ebola}
In most engineering applications the output of interest presents a
monotonic behaviour with respect to the parameters. This means that,
for example, the increment in the inputs produces a proportional response
in the outputs. Rarely the model function has a radial symmetry, and
in such cases the parameter space can be divided in subdomains, which
are analyzed separately. In this section we are going to present a
test case where there is no radial symmetry, showing that, even in
this case the kernel-based AS presents better performance with respect
to AS.

For the Ebola test case\footnote{The dataset was taken from
  \url{https://github.com/paulcon/as-data-sets}.}, the output of
interest is the basic reproduction
number $R_0$ of the SEIR model, described in~\cite{diaz2018modified},
which reads
\begin{equation}
\label{eq:ebola}
  R_0 =\frac{\beta_1 +\frac{\beta_2\rho_1 \gamma_1}{\omega} +
  \frac{\beta_3}{\gamma_2} \psi}{\gamma_1+ \psi},
\end{equation}
with parameters distributed uniformly in $\Omega \subset \R^8$. The
parameter space $\Omega$ is an hypercube defined by the lower and upper bounds
summarized in \autoref{tab:ebola}.

\begin{table}[htp!]
\centering
\caption{Parameter ranges for the Ebola model. Data taken
  from~\cite{diaz2018modified}.\label{tab:ebola}}
%\footnotesize
\begin{tabular}{ l c c c c c c c c }
\hline
\hline
  & $\beta_1$ & $\beta_2$ & $\beta_3$ & $\rho_1$ & $\gamma_1$
  & $\gamma_2$ & $\omega$ & $\psi$ \\
\hline
\hline
\rowcolor{Gray}
Lower bound  & 0.1 & 0.1 & 0.05 & 0.41 & 0.0276 & 0.081 & 0.25 & 0.0833 \\
Upper bound & 0.4 & 0.4 & 0.2 & 1 & 0.1702 & 0.21 & 0.5 & 0.7 \\
\hline
\hline
\end{tabular}
\end{table}

We can compare the two one-dimensional response surfaces obtained with
Gaussian process regression. The training samples are $N_s = 800$, and
we use $1000$ features. As spectral measure we use again the multivariate
gaussian distribution $\mathcal{N}(\mathbf{0}, \Sigma)$ with
hyperparameters the elements of the diagonal of the covariance
matrix. The tuning is carried out with $5$-fold cross validation. Even
in this case, the KAS approach results in smaller RMS with respect to
the use of AS (around $60$\% less), as reported in \autoref{tab:res_tests}.
In \autoref{fig:ebola} we report the comparison of the two approaches
over an active subspace of dimension $1$.

\begin{figure}[ht!]
\centering
\includegraphics[width=.49\textwidth]{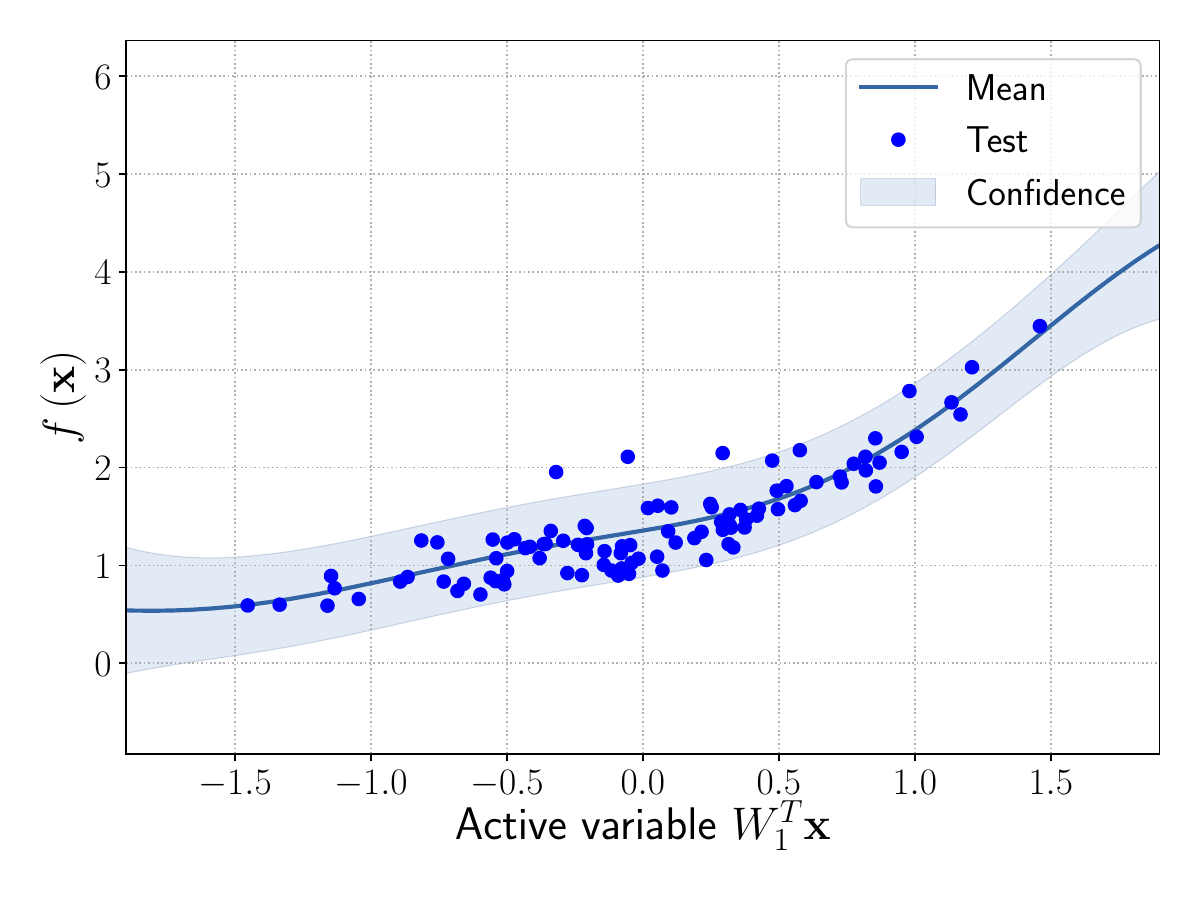}\hfill
\includegraphics[width=.49\textwidth]{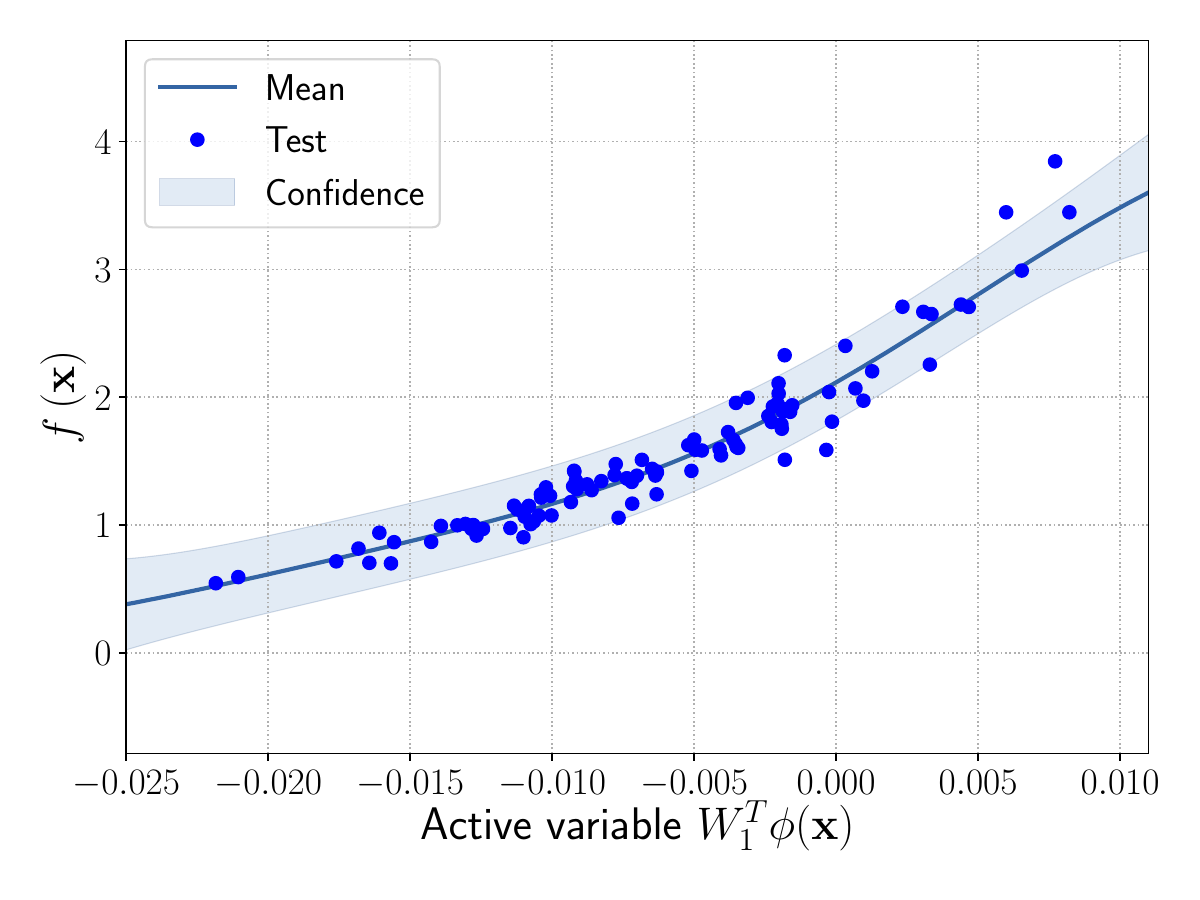}\\
\caption{Comparison between the sufficiency summary plots obtained from
  the application of AS and KAS methods for the $R_0$ model function
  with domain $\Omega$, defined in \autoref{eq:ebola}. The left plot
  refers to AS, the right plot to KAS. With the blue solid line we
  depict the posterior mean of the GP, with the shadow area the
  $68\%$ confidence intervals, and with the blue dots the testing points.}
\label{fig:ebola}
\end{figure}

\subsection{Elliptic Partial Differential Equation with random coefficients}
\label{ssec:spde}
In our last benchmark we apply the kernel-based AS to a
vectorial model function, that is the solution of a Poisson
problem with heterogeneous diffusion coefficient. We refer
to~\cite{zahm2020gradient} for an application, on the
same problem, of the AS approach.

We consider the following stochastic Poisson problem on the square $\mathbf{x}=(x, y)\in\Omega := [0,
1]^2$:
\begin{equation}
\label{eq:sPDE}
\begin{cases}
-\nabla\cdot (\kappa\ \nabla u)=1, & \x \in\Omega, \\
u = 0, & \x\in\partial\Omega_{\text{top}}\cup\partial\Omega_{\text{bottom}},\\
u = 10 y(1-y), &\x \in\partial\Omega_{\text{left}},\\
\mathbf{n}\cdot\nabla u = 0, & \x \in\partial\Omega_{\text{right}},
\end{cases}
\end{equation}
with homogeneous Neumann boundary condition on the right side of the domain, that is $\partial\Omega_{\text{right}}$, Neumann boundary conditions on the left side of the domain, that is $\partial\Omega_{\text{left}}$, and Dirichlet boundary conditions on the remaining part of $\partial\Omega$.
The diffusion coefficient $\kappa:(\Omega, \mathcal{A}, P)\times\Omega\rightarrow \R$, where
$\mathcal{A}$ is a $\sigma$-algebra, is such that $\log(\kappa)$ is a Gaussian random field, with covariance function $C(\x,\y)$ defined by
\begin{equation}
C(\x, \y) = \exp\left(-\frac{\lVert \x - \y \rVert^{2}}{\beta^{2}}
\right),\quad \forall \x,\y\in\Omega,
\end{equation}
where $\beta=0.03$ is the correlation length. This random field is approximated with the truncated Karhunen-Lo\`eve decomposition
\begin{equation}
\kappa(s, \x) \approx \exp\left(\sum_{i=0}^m X_i(s)
  \gamma_{i} \boldsymbol{\psi}_i (\x) \right)\qquad\forall (s,
\x) \in \Omega\times\Omega,
\end{equation}
 where $(X_{i})_{i\in 1,\dots, m}$ are independent standard normal
 distributed random variables, and $(\gamma_{i},
 \boldsymbol{\psi}_{i})_{i\in 1,\dots, d}$ are the eigenpairs of the
 Karhunen-Lo\`eve decomposition of the zero-mean random field $\kappa$.

In our simulation the domain $\Omega$ is discretized with a triangular
unstructured mesh $\mathcal{T}$ with $3194$ triangles. The parameter
space has dimension $m=10$. The simulations are carried out with the finite
element method (FEM) with polynomial order one, and for each
simulation the parameters $(X_i)_{i=1,\dots m}$ are sampled from a
standard normal distribution.  The solution $u$ is evaluated at
$d=1668$ degrees of freedom, thus $(V, R_{V})\approx(\mathbb{R}^{d},
S+M)$ where the metric $R_{V}$ is approximated with the sum of the
stiffness matrix $S\in\mathbb{R}^{d}\times\mathbb{R}^{d}$ and the
mass matrix $M\in\mathbb{R}^{d}\times\mathbb{R}^{d}$. This sum is a
discretization of the norm of the Sobolev space $H^{1}(\Omega)$. The
number of features used in the KAS procedure is $D=1500$, the number
of different independent simulations is $M=1000$.

Three outputs of interest are considered. The first target function
$f:\R^m \rightarrow \R$ is the mean value of
the solution at the right boundary $\partial\Omega_{\text{right}}$, which reads
\begin{equation}
\label{eq:spde_1}
f(\mathbf{X})=
\frac{1}{|\partial\Omega_{\text{right}}|}\int_{\partial\Omega_{\text{right}}}u(s)\,ds,
\end{equation}
and it is used to tune the feature map minimizing the RRMSE of the
Gaussian process regression, as described in \RC{Algorithm~\autoref{algo: tuning}}. A
summary of the results for the first output is reported in
\autoref{tab:res_tests}. The plots of the regression are reported in
\autoref{fig:GPR Poisson}. Even in this case both from a qualitative
and a quantitative point of view, the kernel-based approach achieves
the best results.

\begin{figure}[ht!]
\centering
\includegraphics[width=.49\textwidth]{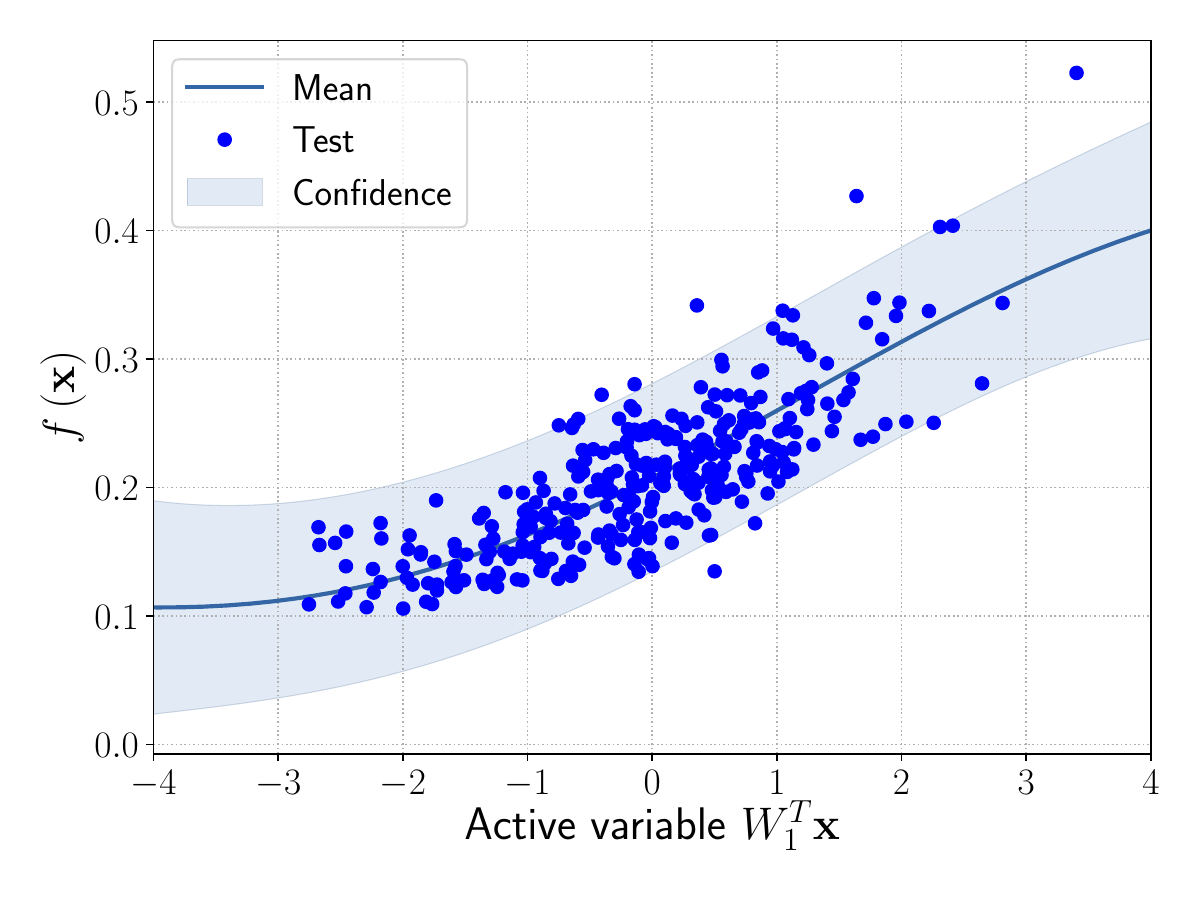}\hfill
\includegraphics[width=.49\textwidth]{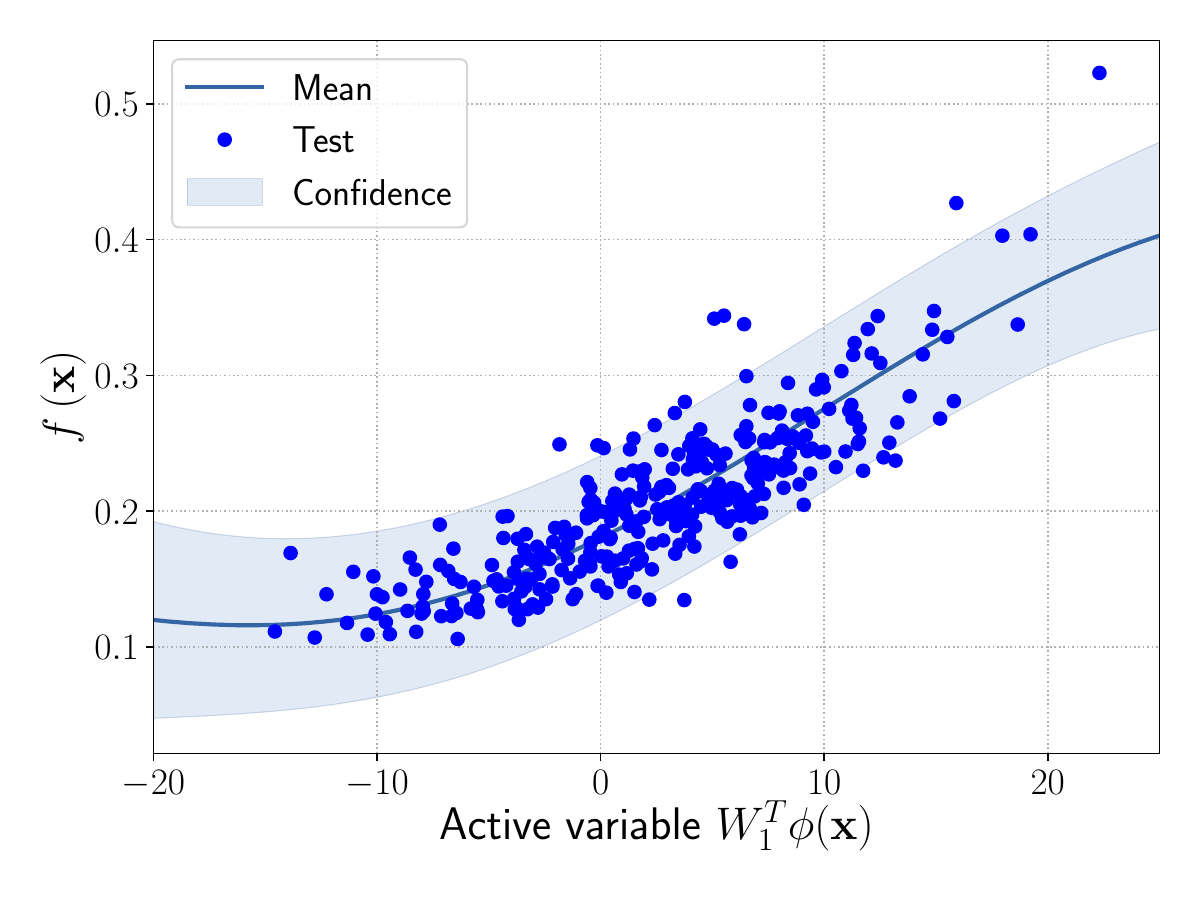}
\caption{Comparison between the sufficiency summary plots obtained from
  the application of AS and KAS methods for the stochastic PDE model, defined
  in Equations~\eqref{eq:sPDE} and~\eqref{eq:spde_1}. The left plot refers to AS, the
  right plot to KAS. With the blue solid line we depict the posterior mean of
  the GP, with the shadow area the $68\%$ confidence intervals,
  and with the blue dots the testing points.}
\label{fig:GPR Poisson}
\end{figure}

The second output we consider is the solution function
\begin{equation}
\label{eq:spde_2}
f:\R^m\rightarrow(V, R_{V})\approx(\R^d, S), \qquad
f(\X)=u\in\R^d.
\end{equation}
This output can be employed as a
surrogate model to predict the solution $u$ given the parameters
$\mathbf{X}$ that define the diffusion coefficient instead of carrying
out the numerical simulation. \RB{The surrogate model should be
  constructed over the span of the modes identified by the chosen
  reduction strategy, after projecting the data.} AS and KAS modes are
distinguished but can detect some common regions of interest as shown in
\autoref{tab:modes}.

The third output is the evaluation of the solution at a specific degree of freedom
with index $\hat{i}$, that is
\begin{equation}
\label{eq:spde_3}
f:\mathbb{R}^{m}\rightarrow\mathbb{R},\qquad
f(\X)=u_{\hat{i}}\in\R,
\end{equation}
in this case the dimension of the input space is
$m=100$. Since we use a Lagrangian basis in the finite element formulation and the polinomial order is 1, the node of the mesh associated to the chosen degree of freedom has coordinates $[0.27, 0.427]\in\Omega$. Qualitatively we can see from \autoref{tab:modes} that the AS
modes locate features in the domain which are relatively more regular
with respect to the KAS modes. To obtain this result we increased the dimension of the input space, otherwise not even the AS modes could locate properly the position in the domain $\Omega$ of the degree of freedom.

In the second and third case the diffusion coefficient is given by
\begin{equation}
\kappa(\x) = \exp\left(\sum_{i=1}^D \mathbf{v}_j[i]
  \Tilde{\boldsymbol{\psi}}_{j}(\x) \right)\qquad\forall (s,
\x) \in \Omega\times\Omega,
\end{equation}
where $\mathbf{v}_j\in\mathbb{R}^{D}$, $j\in\{1, \dots,D\}$, is the
$j$-th active eigenvector from the KAS procedure and the functions
$\Tilde{\Psi}:=(\Tilde{\boldsymbol{\psi}}_{1},\dots,
\Tilde{\boldsymbol{\psi}}_{D})$ are defined by
\begin{equation}
\Tilde{\Psi} = \phi(\Psi),
\end{equation}
where $\phi$ is the feature map defined in
Equation~\eqref{eq:feature_map} with the projection matrix $W$ and bias
$b$, and $\Psi:=(\gamma_{1}\boldsymbol{\psi}_{1},\dots,
\gamma_{m}\boldsymbol{\psi}_{m})$.

The gradients of the three outputs of interest considered are evaluated with the adjoint method.

\begin{table}
\centering
  \caption{First $3$ modes using Karhunen-Lo\`eve (K-L)
    decomposition, AS, and KAS, for the outputs defined in
    Equations~\eqref{eq:spde_1},~\eqref{eq:spde_2},
    and~\eqref{eq:spde_3}.\label{tab:modes}}
%\footnotesize
\begin{tabular}{c C{.22\textwidth} C{.22\textwidth} C{.22\textwidth}}
\toprule
Case & Mode $1$ & Mode $2$ & Mode $3$  \\
\midrule
K-L & \includegraphics[width=0.2\textwidth]{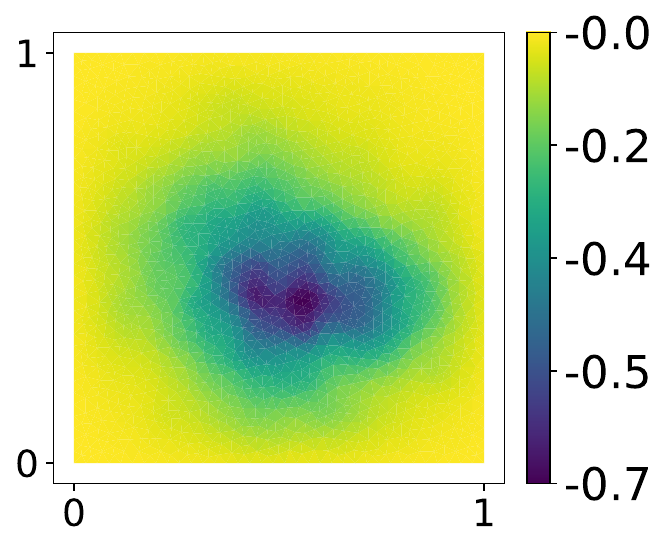}
& \includegraphics[width=0.2\textwidth]{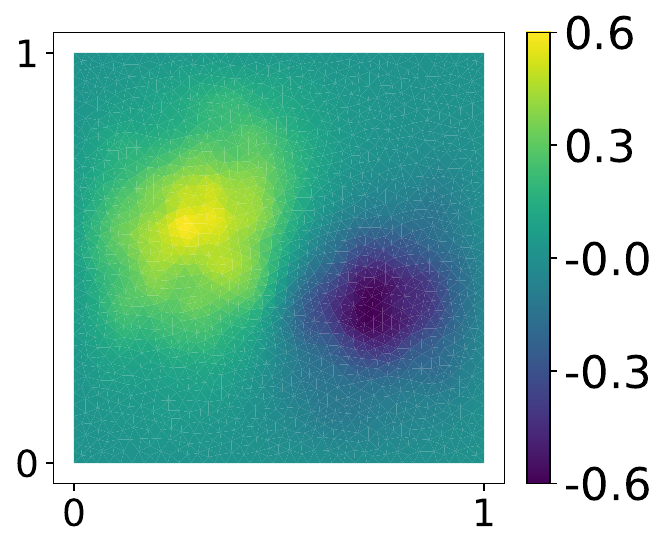}
& \includegraphics[width=0.2\textwidth]{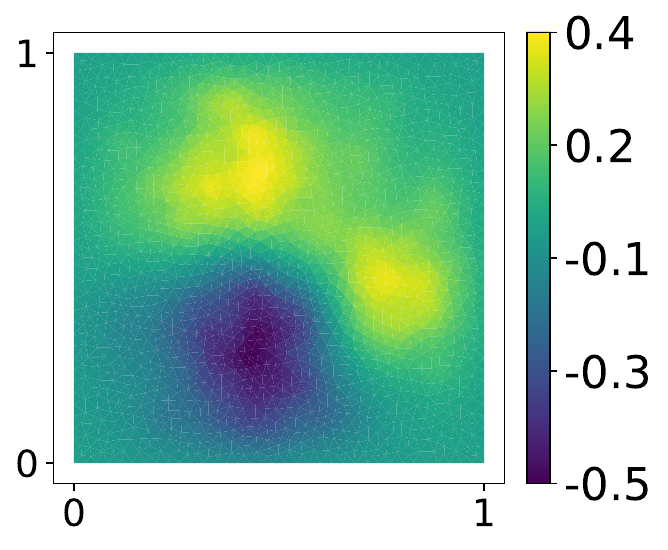}\\
AS \eqref{eq:spde_1} & \includegraphics[width=0.2\textwidth]{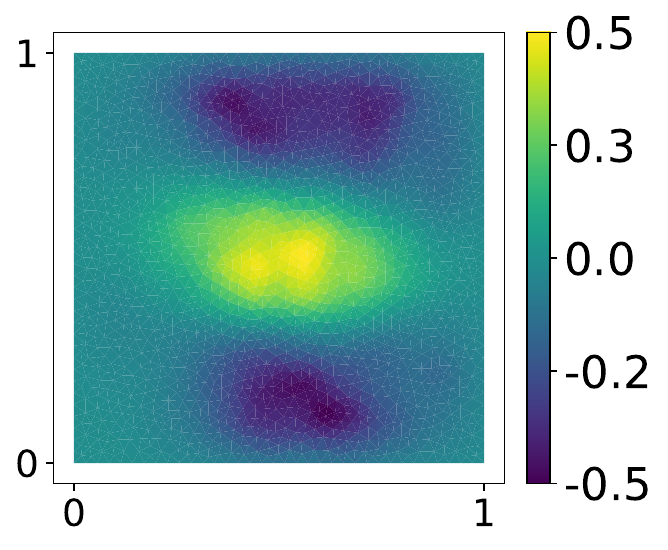}
& \includegraphics[width=0.2\textwidth]{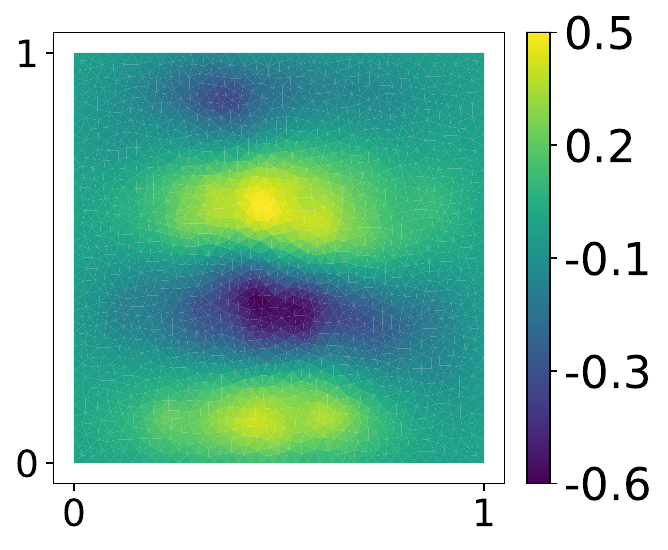}
& \includegraphics[width=0.2\textwidth]{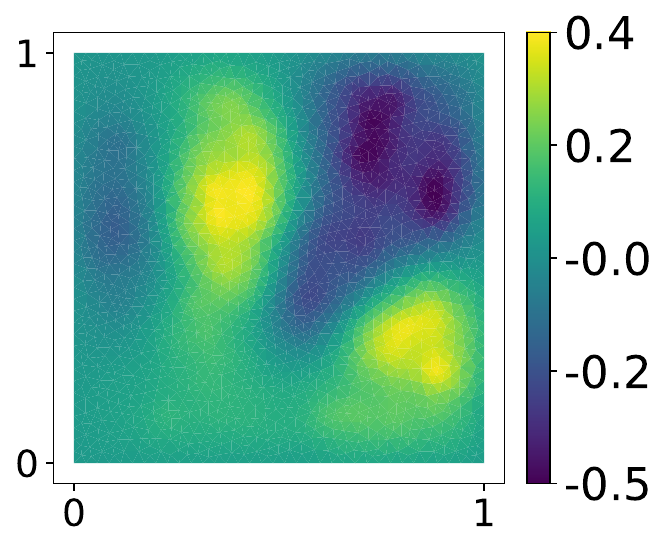}\\
KAS \eqref{eq:spde_1} & \includegraphics[width=0.2\textwidth]{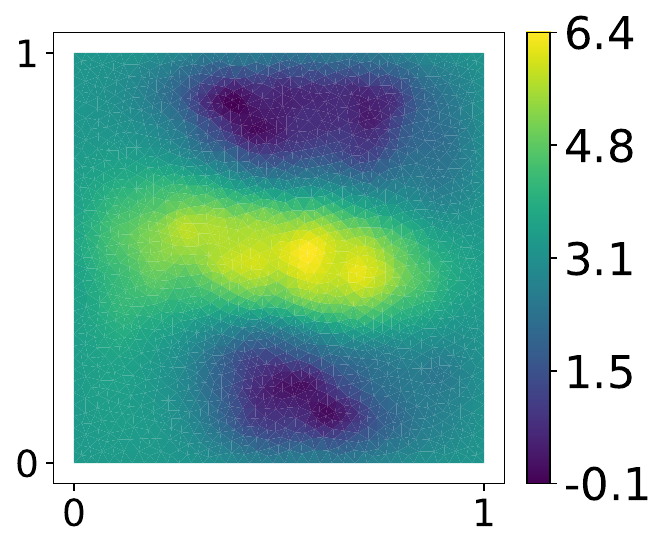}
& \includegraphics[width=0.2\textwidth]{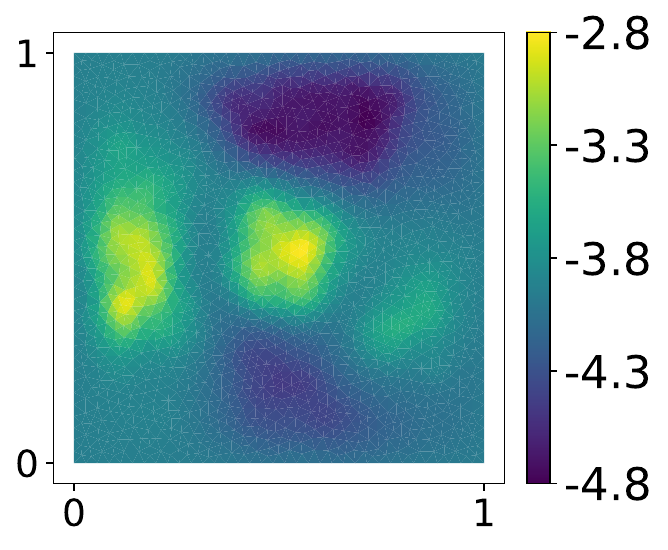}
& \includegraphics[width=0.2\textwidth]{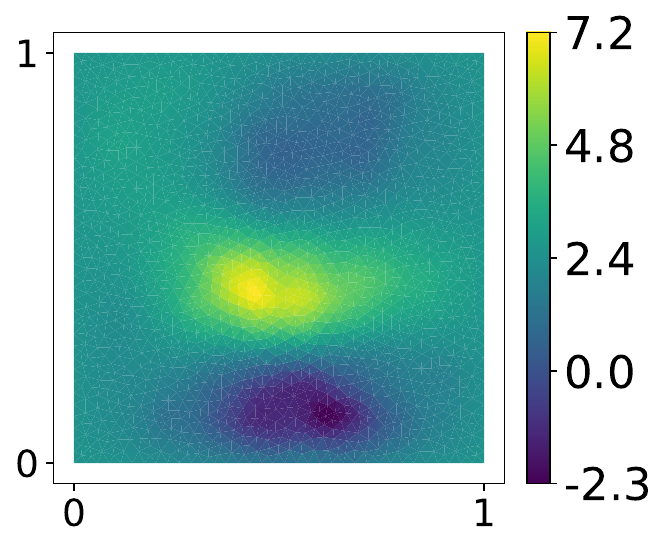}\\
AS \eqref{eq:spde_2} & \includegraphics[width=0.2\textwidth]{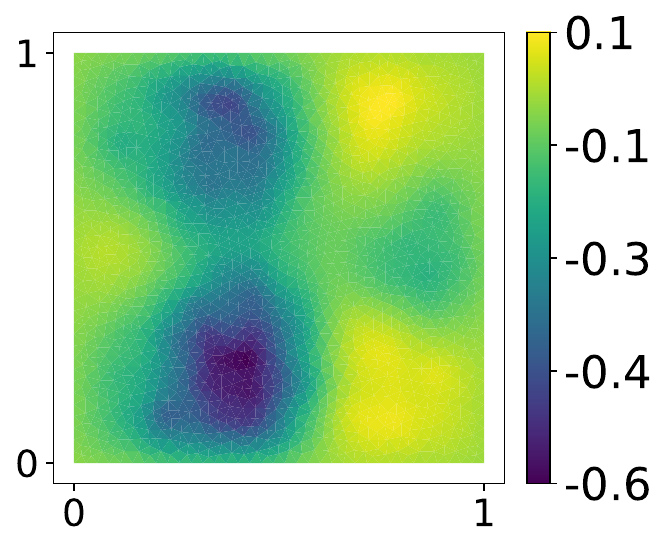}
& \includegraphics[width=0.2\textwidth]{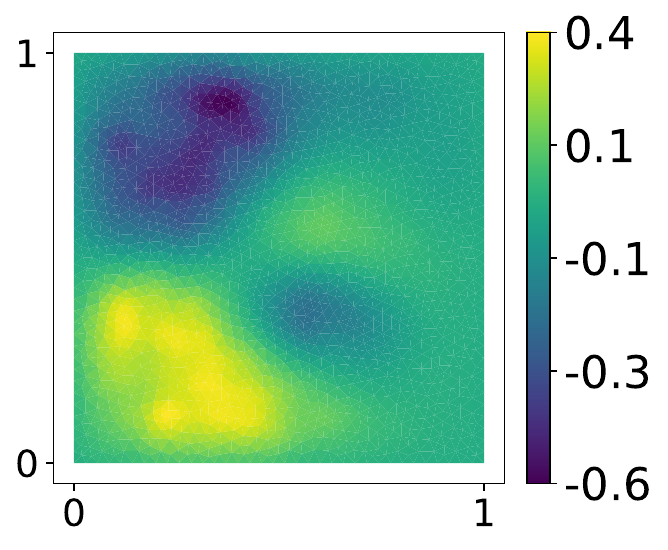}
& \includegraphics[width=0.2\textwidth]{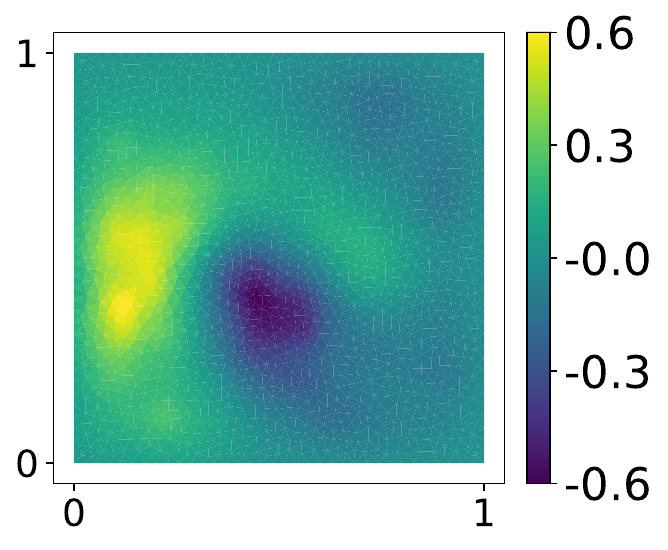}\\
KAS \eqref{eq:spde_2} & \includegraphics[width=0.2\textwidth]{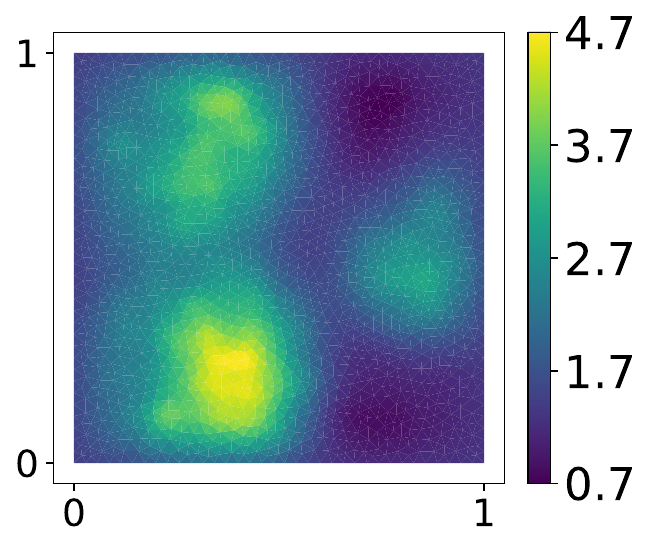}
& \includegraphics[width=0.2\textwidth]{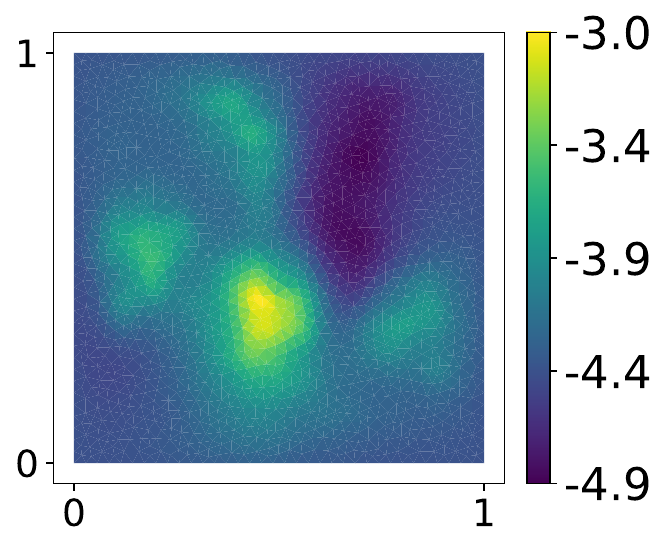}
& \includegraphics[width=0.2\textwidth]{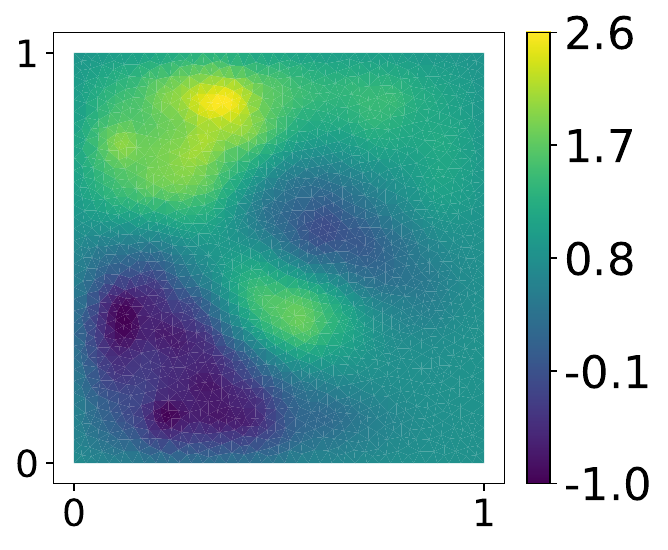}\\
AS \eqref{eq:spde_3} & \includegraphics[width=0.2\textwidth]{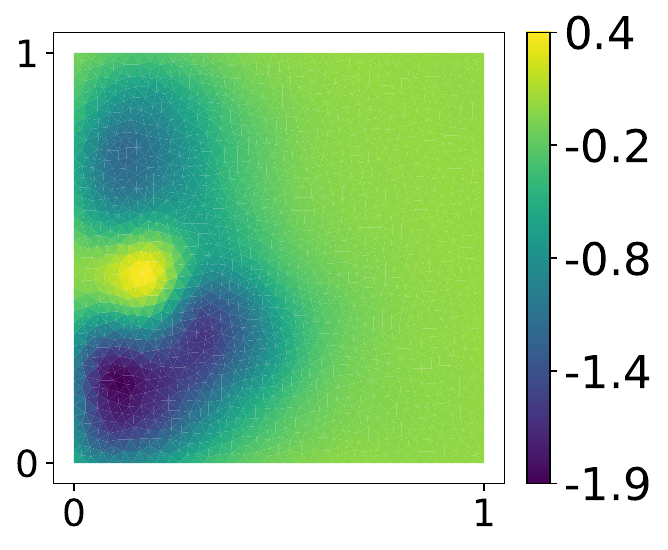}
& \includegraphics[width=0.2\textwidth]{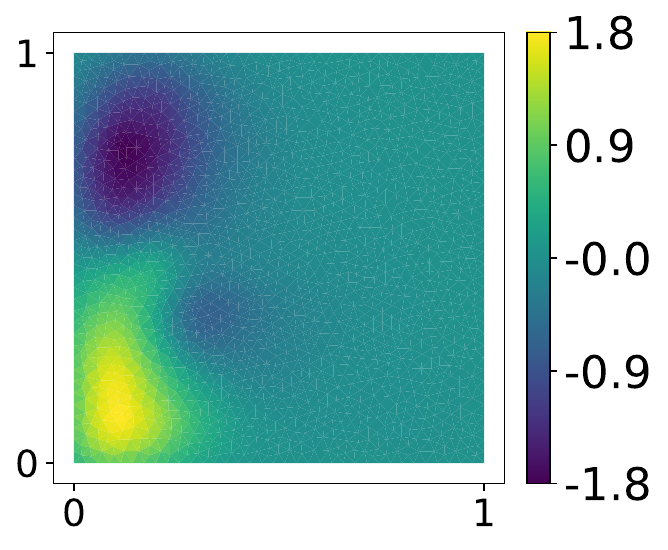}
& \includegraphics[width=0.2\textwidth]{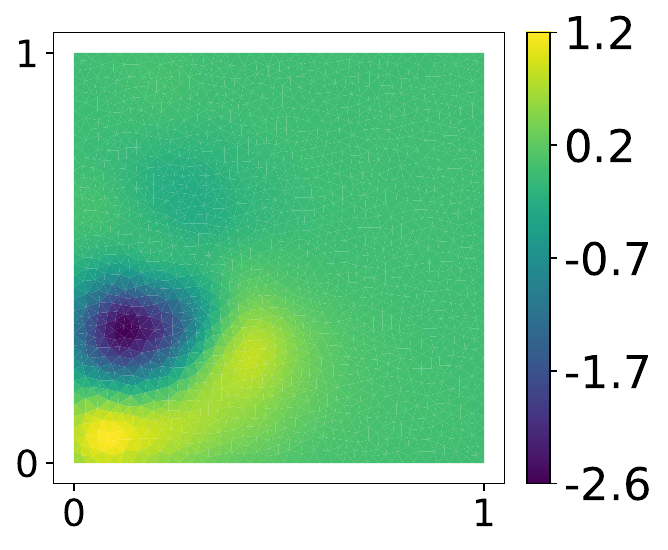}\\
KAS \eqref{eq:spde_3} & \includegraphics[width=0.2\textwidth]{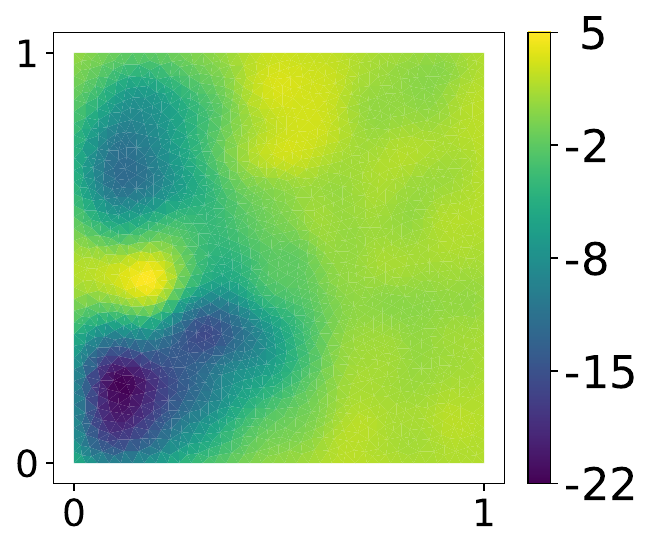}
& \includegraphics[width=0.2\textwidth]{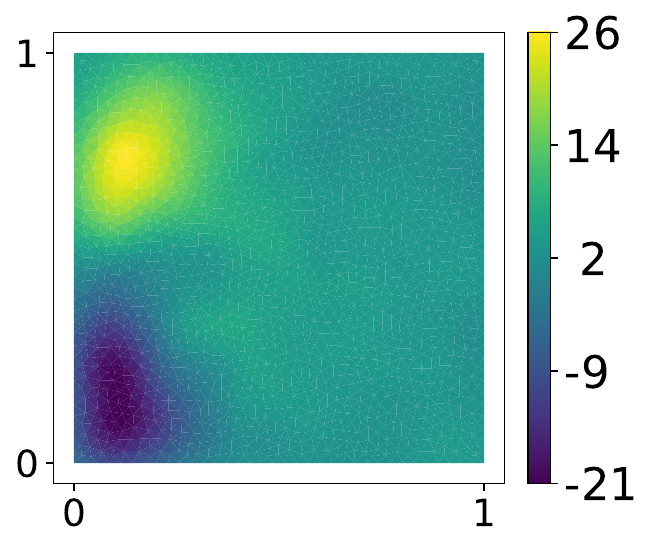}
& \includegraphics[width=0.2\textwidth]{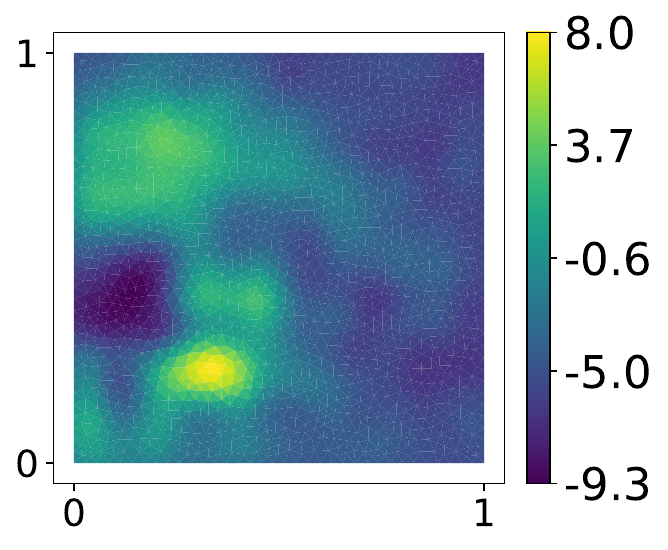}\\
\bottomrule
\end{tabular}
\end{table}

\section{A CFD parametric application of KAS solved with the DG method}
\label{sec:results}

We want to test the kernel-based extension of the active subspaces
in a computational fluid dynamics context. The lift and drag
coefficients of a NACA~0012 airfoil are considered as model
functions.
Numerical simulations are carried out with different input parameters for
quantities that describe the geometry and the physical conditions of
the problem. The evolution of the model is protracted until a periodic
regime is reached.  Once the simulation data have been
collected, sensitivity analysis is performed searching for an active
subspace and response surfaces with GPR are then built from the
application of AS and KAS techniques.

The fluid motion is modelled through the unsteady
incompressible Navier-Stokes equations approximated through the
Chorin-Temam operator-splitting method implemented in HopeFOAM~\cite{HopeFOAM}.
HopeFOAM is an extension of OpenFOAM~\cite{OpenFOAM, weller1998}, an open source
software for the solution of complex fluid flows problems, to variable
higher order  element method and it adopts a Discontinuous Galerkin Method,
based on  the formulation proposed by Hesthaven and
Warburton~\cite{hesthaven2007nodal}.

The Discontinuous Galerkin method (DG) is a high-order method, which
has appealing features such as the low artificial viscosity and a
convergence rate which is optimal also on unstructured grids, commonly
used in industrial frameworks.
In addition to this, DG is naturally suited for the solution of problems
described by conservative governing equations (Navier Stokes equations,
Maxwell's equations and so on) and for parallel computing.
All these properties are due to the fact that, differently from
formulations based on standard finite elements, no continuity is imposed
on the  cell boundaries and neighboring elements only exchange a common
flux. The major drawback of DG is its high computational cost with respect
to continuous Galerkin methods, due to the need of evaluating fluxes
during each time step and the presence of extra degree of freedoms in
correspondence of the elemental edges.

Nowadays efforts are aimed at applying the DG in problems which involve
deformable domains~\cite{zahr2016adjoint} and at improving the
computational efficiency of the DG adopting techniques based on
hybridization methods, matrix-free implementations, and massive
parallelization~\cite{nguyen2009implicit,pazner2017stage}.

\subsection{Domain and mesh description}
\label{ssec:mesh}
The domain $\Omega$ of the fluid dynamic simulation is a
two-dimensional duct with a sudden area expansion and a NACA~0012
airfoil is placed in the largest section.
The inflow $\partial\Omega_{I}$ is placed at the beginning of the
narrowest part of  the duct, and here the fluid velocity is set constant
along all the inlet boundary. The outlet is placed on the
right hand side and it is denoted with  $\partial\Omega_{O}$.
We refer with
$\partial\Omega_{W} := \partial\Omega\backslash
\{\partial\Omega_{O}\cup\partial\Omega_{I} \}$
to the boundaries of the airfoil and to the walls of the duct, where
no slip boundary conditions are applied.
The horizontal lengths of the sections of the channels are $0.6$~\si{m} and
$1.35$~\si{m}, respectively. The vertical length of the duct after the area
expansion is $0.4$~\si{m}, while the width of the first one depends on two
distinct parameters.
The airfoil has a chord-length equal to $0.1$~\si{m} but its position with
respect to the duct and its angle of attack are described by
geometric parameters. Further details about the geometric
parameterization of the geometry are provided in the following
section. A proper triangulation is designed with the aid of the
gmsh~\cite{gmsh} tool and the domain is discretized with $4445$
unstructured elements.

\RC{The evaluation of adimensional
magnitudes, commonly used for characterizing the fluid flow field,
requires the definition of some reference magnitudes.} For
the problem at hand we consider the equivalent diameter of the
channel in correspondence of the inlet as the reference lengthscale,
while the reference velocity is the one imposed at the inlet.

\subsection{Parameter space description}
We chose $7$ heterogeneous parameters for the model: $2$ physical, and
$5$ geometrical which describe the width of the channel and the
position of the airfoil. In \autoref{tab:naca_pars} are reported the
ranges for the geometrical and physical parameters of the
simulation. $U$ is the first component of the initial velocity, $\nu$
is the kinematic viscosity, $x_{0}$ and $y_{0}$ are the horizontal and
vertical components of the translation of the airfoil with respect to its reference position (see \autoref{fig:naca_pars}), $\alpha$ is the
angle of the counterclockwise rotation and the center of rotation is
located right in the middle of the airfoil, $y^+$ and $y^-$ are the
module of the vertical displacements of the upper and lower side of
the initial conduct from a prescribed position.
\begin{table}[htp!]
\centering
\caption{Parameter ranges for the NACA problem.\label{tab:naca_pars}}
%\footnotesize
\begin{tabular}{ l c c c c c c c }
\hline
\hline
  & $\nu$ & $U$ & $x_0$ & $y_0$ & $\alpha$ & $y^+$ & $y^-$ \\
\hline
\hline
\rowcolor{Gray}
Lower bound & 0.00036 & 0.5& -0.099& -0.035& 0& -0.02& -0.02 \\
Upper bound & 0.00060 & 2& 0.099& 0.035& 0.0698& 0.02& 0.02 \\
\hline
\hline
\end{tabular}
\end{table}

In \autoref{fig:naca_pars} are reported different configurations
of the domain for the minimum and maximum values of the parameters
$\alpha$, $x_{0}$, $y_{0}$, and the minimum opening of the
channel.
\begin{figure}
\centering
\includegraphics[width=0.32\textwidth]{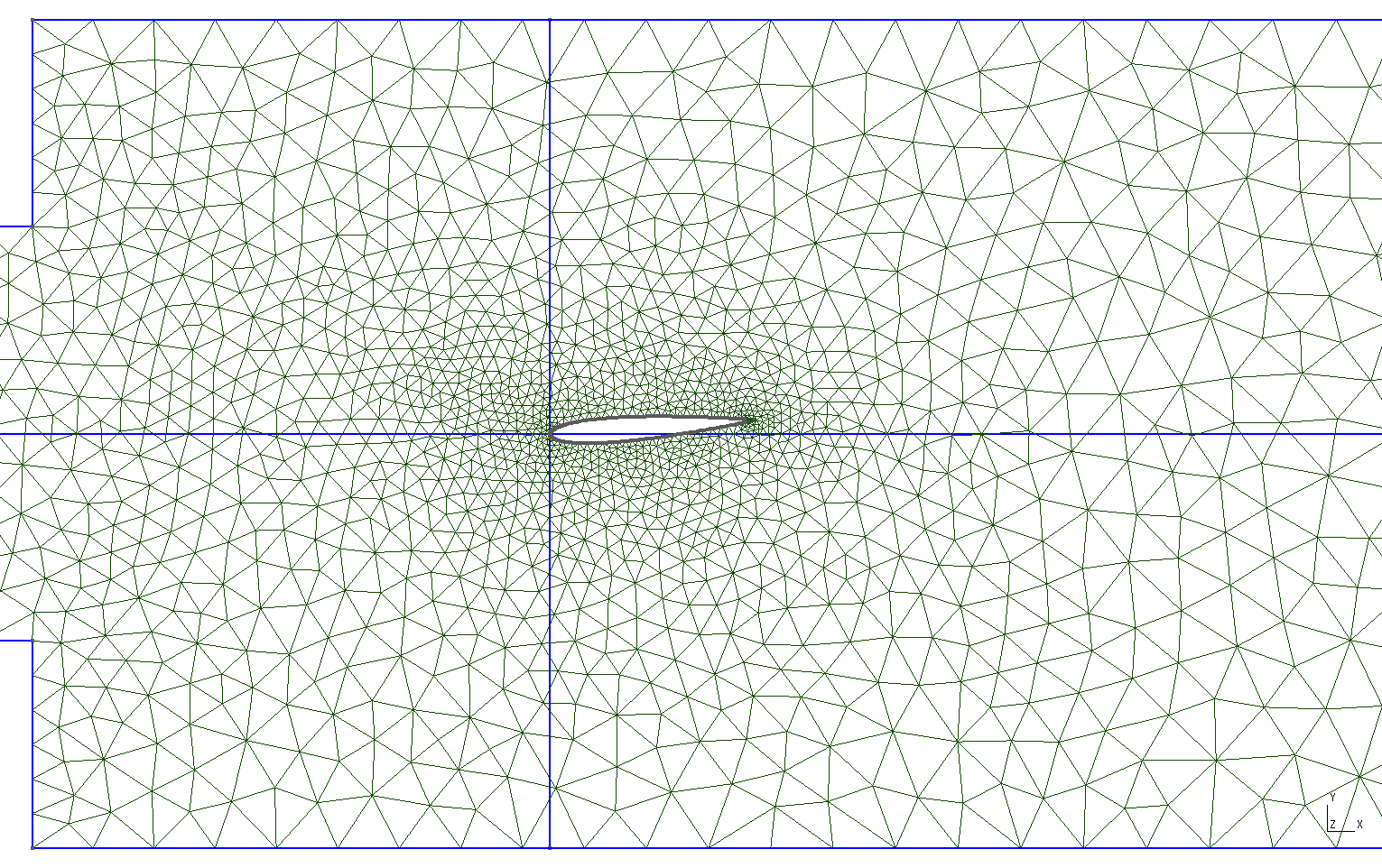}
\includegraphics[width=0.32\textwidth]{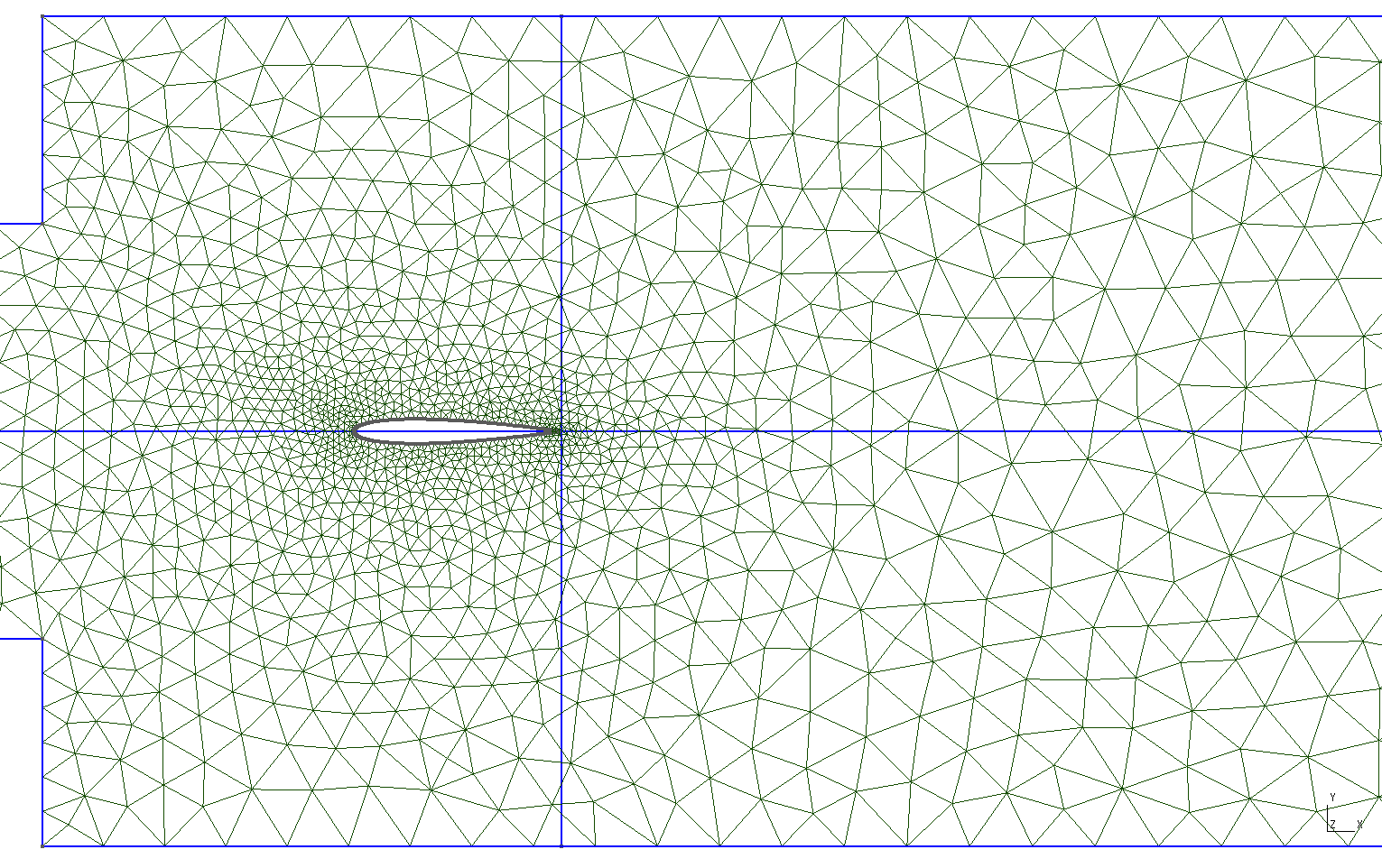}
\includegraphics[width=0.32\textwidth]{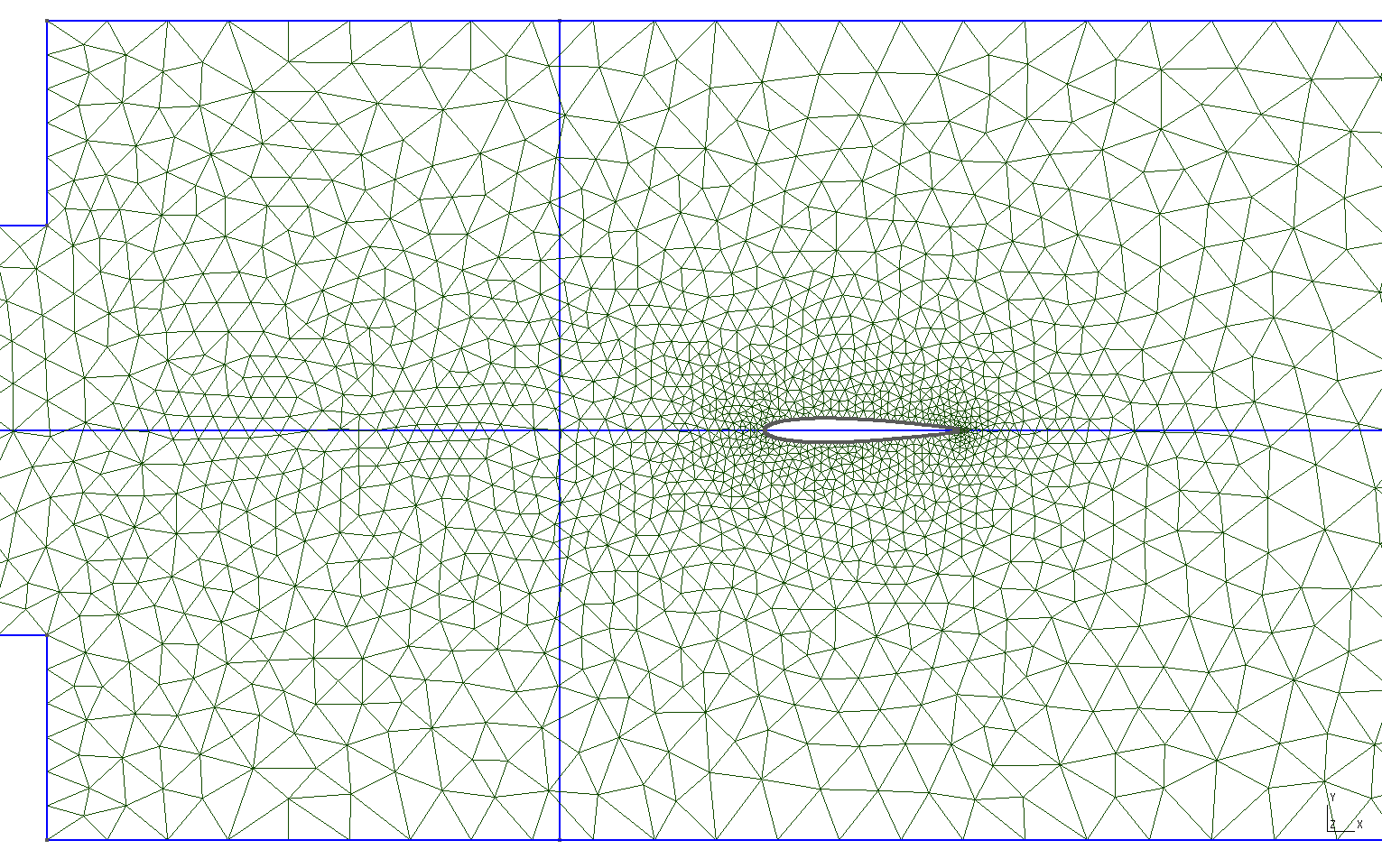}\\
\includegraphics[width=0.32\textwidth]{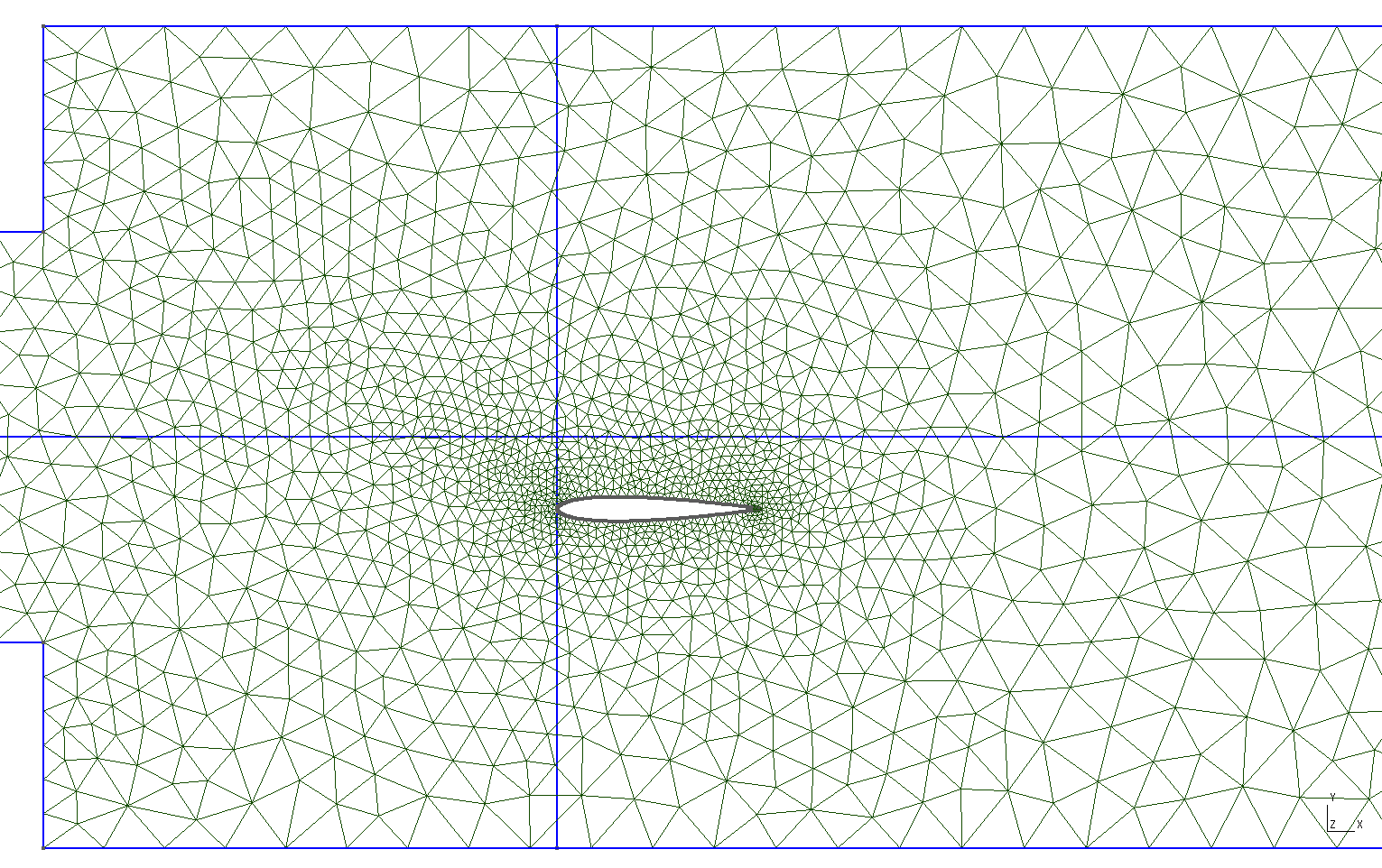}
\includegraphics[width=0.32\textwidth]{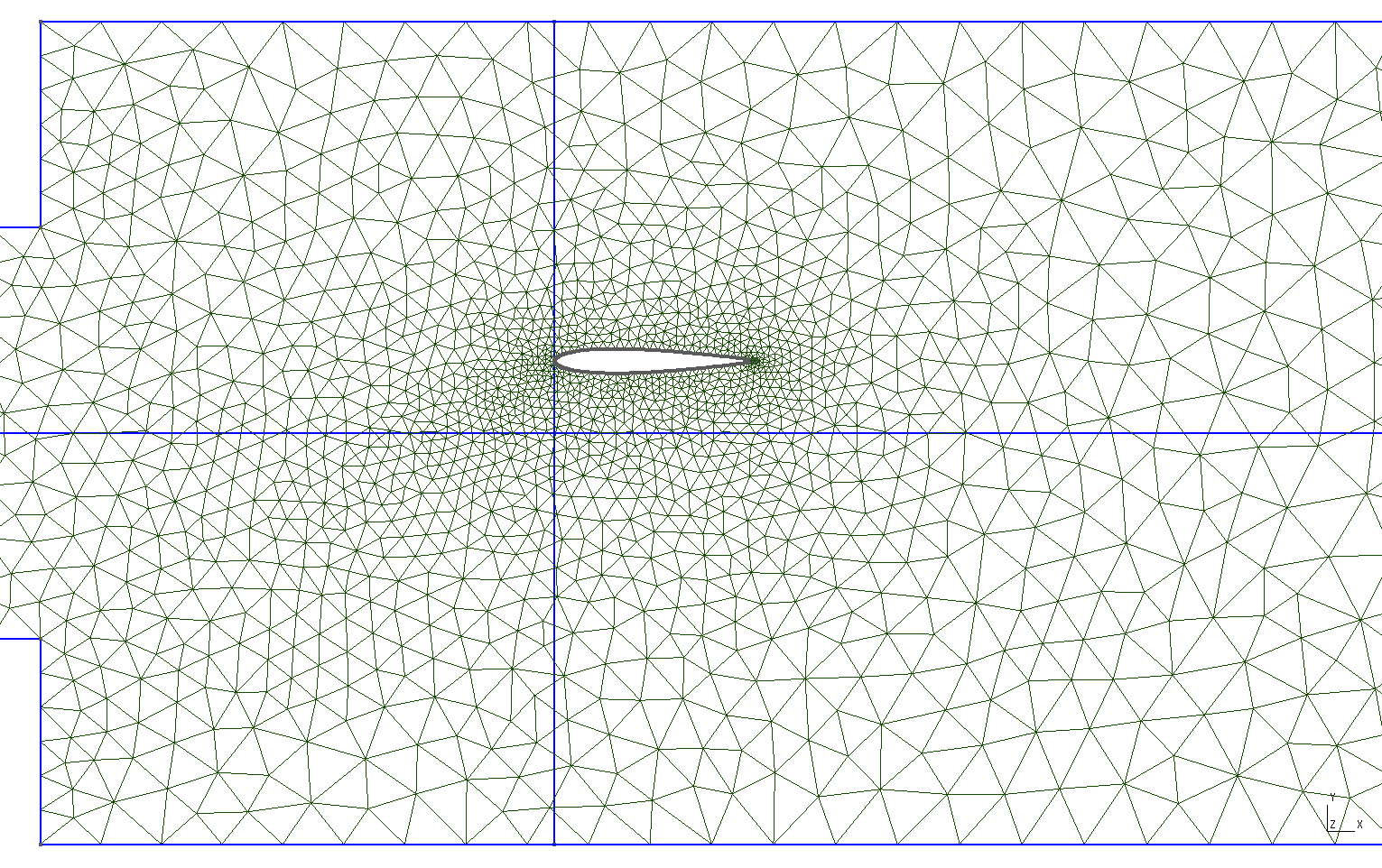}
\includegraphics[width=0.32\textwidth]{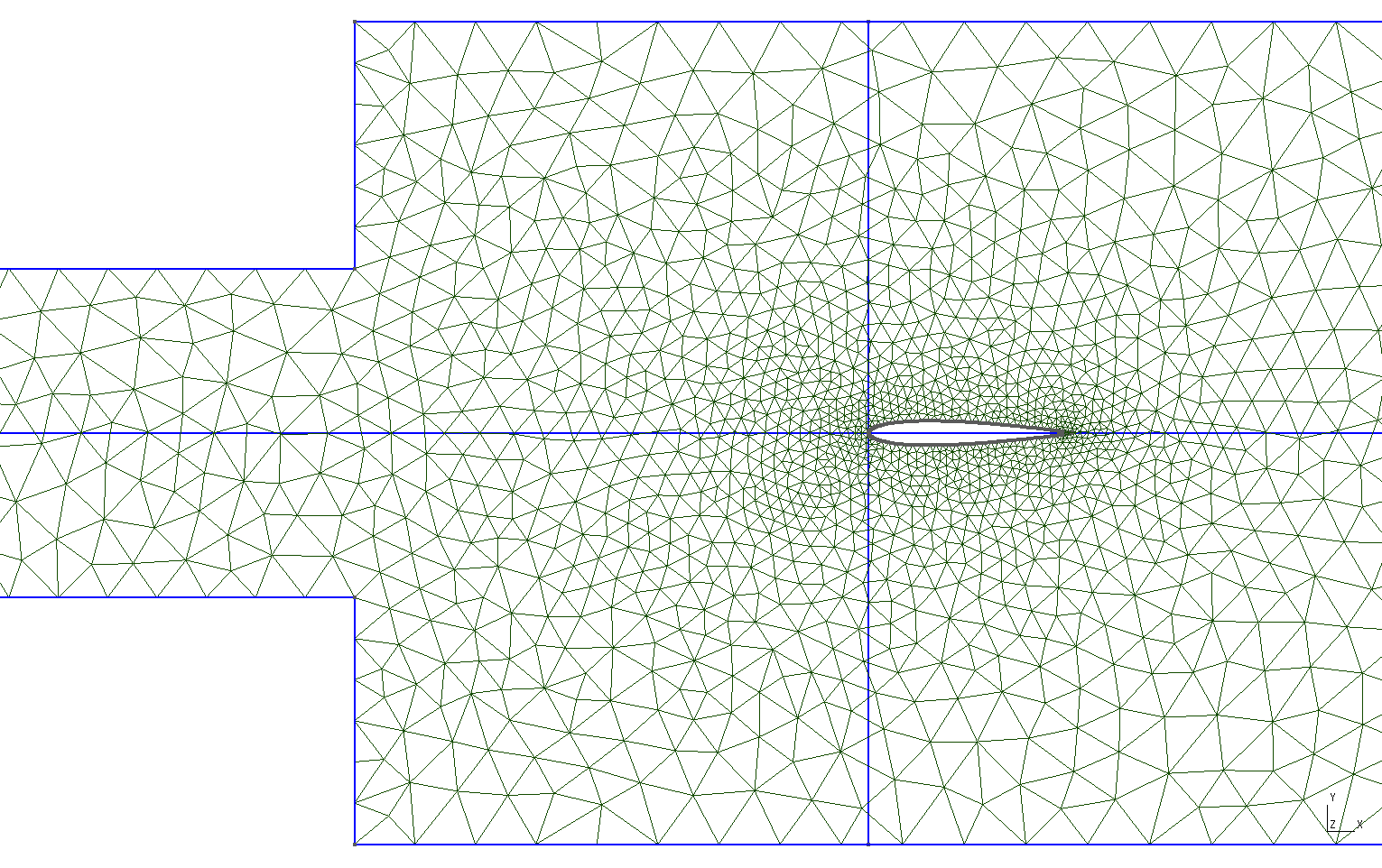}
\caption{Domain configuration for minimum and maximum values
  of some geometric parameters. In order are represented the maximum
  angle of attack $\alpha$, the ranges for the horizontal translation $x_{0}$,
  the ranges for the vertical translation $y_{0}$, and the minimum opening
  of the channel which depends on the parameters $y^+$ and $y^-$ in
  \autoref{tab:naca_pars}.}
\label{fig:naca_pars}
\end{figure}

We have considered only the counterclockwise rotation of the airfoil
for symmetrical reasons. The range of the Reynolds number varies from
$400$ to $2000$, still under the regime of laminar flow.

\subsection{Governing equations}
The CFD problem is modeled through the incompressible Navier-Stokes
and the open source solver HopeFOAM~\cite{HopeFOAM} has been employed
for solving this set of equations~\cite{hesthaven2007nodal}.

Let $\Omega\subset \R^2$ be the two-dimensional domain introduced in
\autoref{ssec:mesh}, and let us consider the incompressible
Navier-Stokes equations. Omitting the dependence on
$(\x, t) \in \Omega \times \R^+$ in the first two
equations for the sake of compactness, the governing equations are
\begin{align}
\label{eqincompressibleNS}
    \begin{cases}
        \partial_t \ub  + (\ub \cdot\nabla) \ub =-\nabla p+\nu \Delta
        \ub \qquad \qquad \qquad  & \x \in\Omega, \\
        \nabla\cdot\ub  = 0  & \x \in\Omega, \\
        \ub (\x ,0) = \ub _{0}, \quad p(\x ,0)=0  &\x \in\Omega, \\
        \ub (\x ,t) = \ub _{0}, \quad \mathbf{n}\cdot\nabla p(\x ,t)=0
        &\x \in\partial\Omega_{I}, \\
        \ub (\x ,t) = 0, \quad \mathbf{n}\cdot\nabla p(\x ,t)=0 &\x
        \in\partial\Omega_{W}, \\
        \mathbf{n} \cdot \nabla\ub (\x ,t)=0, \quad p(\x ,t)=1 &\x \in\partial\Omega_{O},
    \end{cases}
\end{align}
where $p$ is the scalar pressure field, $\ub=(u,v)$ is the velocity
field, $\nu$ is the viscosity constant and $\ub_0$ is the initial
velocity. In conservative form, the previous equations can be rewritten
as
\begin{equation}
\begin{cases}{}
    \partial_t \mathbf{u}+\nabla\cdot \mathcal{F}=-\nabla p + \nu\Delta\mathbf{u},\\
    \nabla\cdot \mathbf{u}=0,
\end{cases}
\end{equation}
with the flux $\mathcal{F}$ given by
\begin{equation}
    \mathcal{F}=\left[\mathbf{F}_{1},\mathbf{F}_{2}\right]=\left[\begin{array}{cc}
    u^{2}   &  uv\\
    uv     & v^{2}
    \end{array}
    \right].
\end{equation}
From now on, in order to have a more compact notation, the
 advection term is  written as $\mathcal{N}(\mathbf{u})=\nabla\cdot
 \mathcal{F}(\mathbf{u})$.

For each timestep the procedure is broken into three stages
accordingly to the algorithm proposed by Chorin and adapted for a DG
framework by Hesthaven et al.~\cite{hesthaven2007nodal}: the solution
of the advection dominated conservation law component, the pressure
correction weak divergence-free velocity projection, and the viscosity
update. The non-linear advection term is treated explicitly in time
through a second order Adams-Bashforth method~\cite{gazdag1976time}, while the diffusion
term implicitly. The Chorin algorithm is reported in
Algorithm~\ref{algo:chorin}.

In order to recover the Discontinuos Galerkin formulation, the
equations introduced by the Chorin method are projected onto the
solution space by introducing a proper set of test functions and then
the variables are approximated over each element as a linear
combination of local shape functions.
The DG does not impose the continuity of the solution between
neighboring elements and therefore it requires the adoption of methods
for the evaluation of the flux exchange between neighboring
elements. In the present work the convective fluxes are treated
accordingly to the Lax-Friedrichs scheme, while the viscous
ones are solved through the Interior Penalty
method~\cite{arnold1982interior, shahbazi2005explicit}.
\begin{algorithm}
\caption{Chorin Algorithm.}\label{algo:chorin}

\begin{algorithmic}[1]

\Require state variables $\mathbf{u}$ and $p$ at $t=0$, mesh, and boundary conditions

    \While{$t< t_{\text{final}}$}
    \State Update state variables $\mathbf{u}^{n-1} = \mathbf{u}^n$, $\mathbf{u}^n = \mathbf{u}^{n+1}$.
    \State Find a guess value for the velocity $\tilde{\mathbf{u}}$ by solving:
                \begin{equation*}
                \frac{\gamma_0 \Tilde{\mathbf{u}}-\alpha_0
                  \mathbf{u}^n - \alpha_1 \mathbf{u}^{n-1}}{\Delta t}
                = -\beta_0 \mathcal{N} (\mathbf{u}^n) - \beta_1
                \mathcal{N} (\mathbf{u}^{n-1}) .
                \end{equation*}
    \State Find the pressure at $n+1$ solving:
                $-\Delta\Bar{p}^{n+1} = -\frac{\gamma_0}{\Delta t}
                \nabla \cdot\Tilde{\mathbf{u}}$.
    \State Find the intermediate velocity $\Tilde{\Tilde{\mathbf{u}}} $ solving:
                $
                \gamma_0
                \frac{\Tilde{\Tilde{\mathbf{u}}}-\Tilde{\mathbf{u}}}{\Delta
                  t} = \nabla\Bar{p}^{n+1} $.
    \State Find the velocity at the $n+1$ time instant solving:
                $\gamma_0 \left(\frac{\mathbf{u}^{n+1} -
                     \Tilde{\Tilde{\mathbf{u}}}}{\Delta t}\right) = \nu \Delta \mathbf{u}^{n+1}$.
    \State Update $t^n$.
    \EndWhile

\State \Return state variables $\mathbf{u}$ and $p$ at $t=t_{\text{final}}$
\end{algorithmic}
\end{algorithm}

The aerodynamic quantities we are interested in are the lift and drag
coefficients in the incompressible case computed from the quantities
$\mathbf{u}$, $p$, $\nu$, $A_{\text{ref}}$, and $\mathbf{u}_0$
with a contour integral along the airfoil $\Gamma$ as
\begin{equation}
\label{eq:force lift and drag}
    f=
    \oint_{\Gamma}p\mathbf{n} -
    \nu\left(\nabla\mathbf{u}+\nabla\mathbf{u}^{T}
    \right)\mathbf{n}\,d\mathbf{s}.
\end{equation}
The vector $\mathbf{n}$ is
the outward normal along the airfoil surface. The circulation in
$\Gamma$ is affected by both the pressure and stress distributions
around the airfoil. The
projection of the force along the horizontal and vertical directions
gives the drag and lift coefficients respectively
\begin{equation}
\label{eq:Drag}
    C_D=\frac{f\cdot \mathbf{e}_{1}}{\frac{1}{2}|\mathbf{u}_0|^{2}A_{\text{ref}}},
\end{equation}
\begin{equation}
\label{eq:Lift}
    C_L=\frac{f\cdot \mathbf{e}_{2}}{\frac{1}{2}|\mathbf{u}_0|^{2}A_{\text{ref}}},
\end{equation}
where the reference area $A_{\text{ref}}$ is the chord of the
airfoil times a length of $1$~\si{m}. For the aerodynamic analysis of the fluid flow past an airfoil see~\cite{kundu2012fluid}.

\subsection{Numerical results}
In this section a brief review of the procedure and some details about
the numerical method and the computational domain will be presented
along the results obtained. For what concerns the DG the polynomial
order chosen is $3$. The total number of degrees of freedom is
$133350$. Small variations on the mesh are present in each of the $285$
simulations due to the different configurations of the domain.
Each simulation is carried out until a periodic behaviour is reached
and for this reason the final times range between $3.5$ and $5$~\si{s},
depending on the specific configuration.
The integration time intervals are variable and they are updated at
the end of each step in order to satisfy the CFL condition.
The $7$ physical and geometrical parameters of the simulation are
sampled uniformly from the intervals in \autoref{tab:naca_pars}. In
total we consider a dataset of $285$ samples.

With the purpose of qualitatively visualizing the results, $4$ different
simulations are reported in \autoref{fig:naca_press_vel} for the
module of the velocity field and the scalar
pressure field, respectively, both evaluated at the last time instant.
These simulations were chosen from the $285$ collected in order to show
significant differences in the evolution of the fluid flow. In
\autoref{table:sim_params} are reported the corresponding
parameters. Depending on the position of the airfoil and the other
physical parameters, different fluid flow patterns can be qualitatively
observed.

\begin{table}[htb!]
\centering
\caption{Parameters associated to the simulations plotted in
  \autoref{fig:naca_press_vel}. \label{table:sim_params}}
%\footnotesize
\begin{tabular}{ c c c c c c c c }
\hline
\hline
\# & $\nu$ & $U$ & $x_0$ & $y_0$ & $\alpha$ & $y^+$ & $y^-$ \\
\hline
\hline
\rowcolor{Gray}
1 & 0.000405 & 1.99 & -0.096 & -0.00207 & 0.00282 & 0.00784 & 0.0188\\
2 & 0.000541 & 0.763 & -0.084 & 0.00279 & 0.0260 & -0.0108 & 0.0195\\
\rowcolor{Gray}
3 & 0.000406 & 0.533 & -0.0503 & -0.0327 & 0.0604 & -0.0193 & 0.0068\\
4 & 0.000430 & 1.11 & -0.0897 & -0.0279 & 0.0278 & -0.00624 & 0.0197\\
\hline
\hline
\end{tabular}
\end{table}

\begin{figure}[htb!]
\includegraphics[width=.48\textwidth]{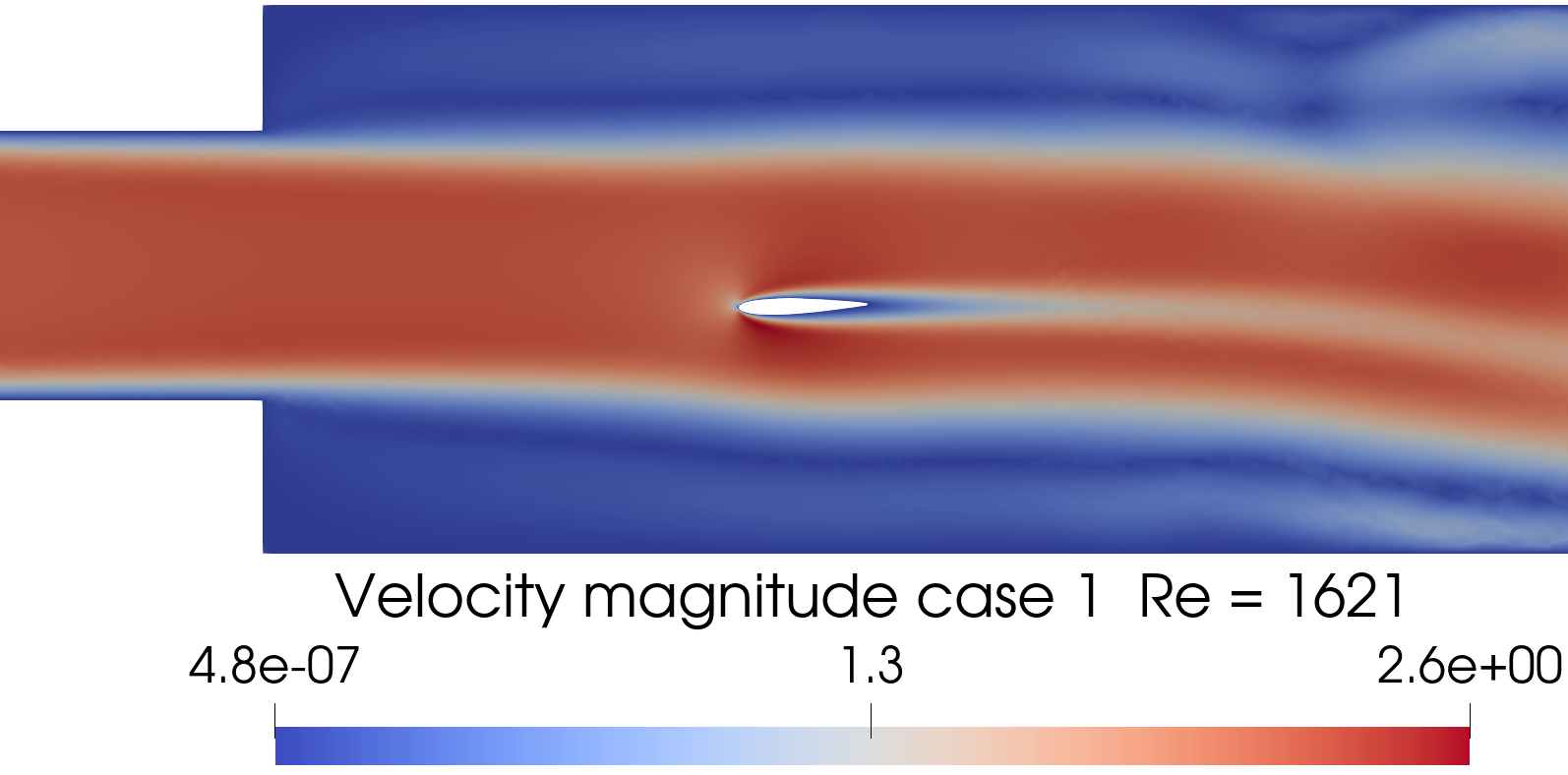}\hfill
\includegraphics[width=.48\textwidth]{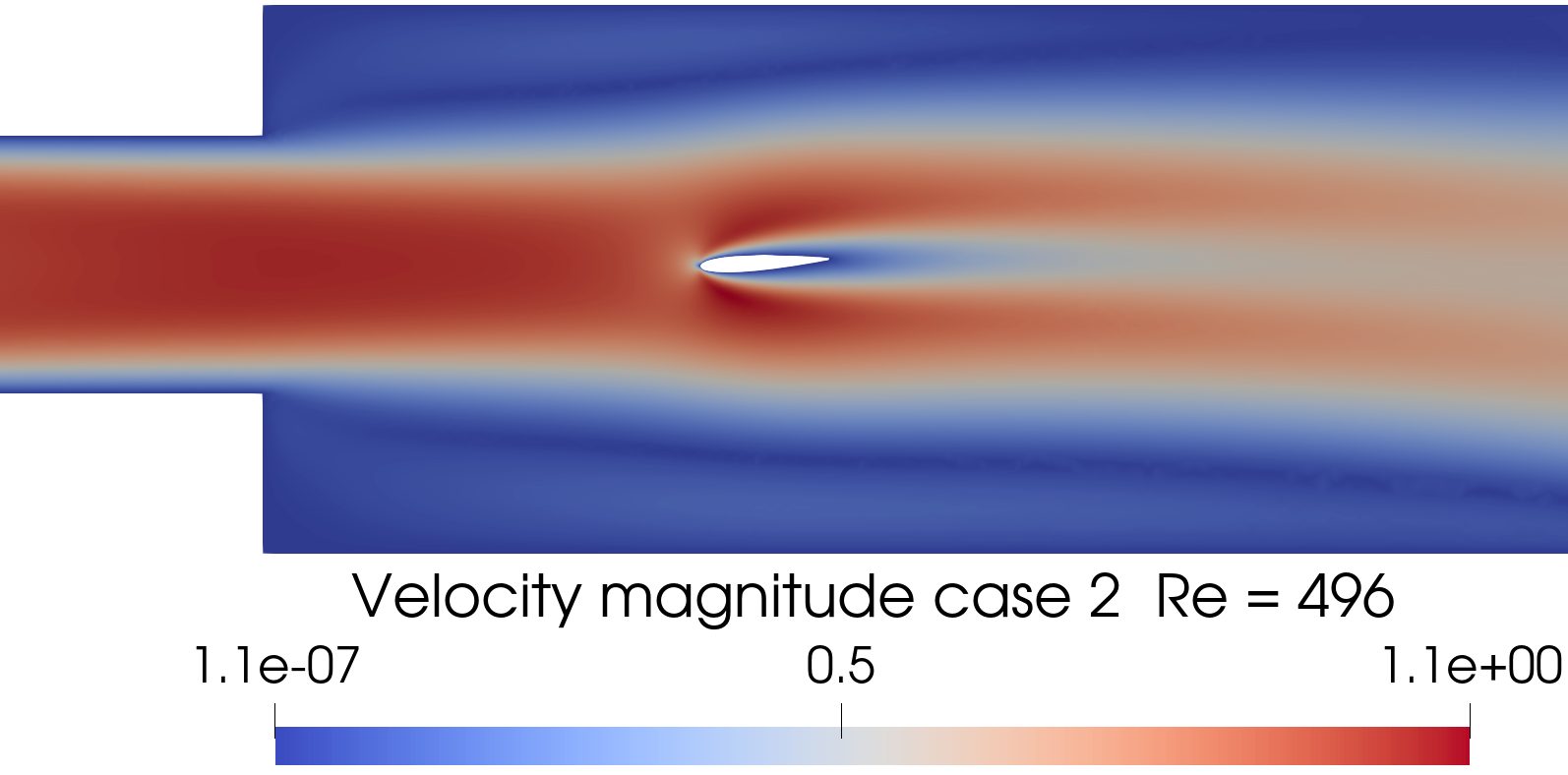}\\

\includegraphics[width=.48\textwidth]{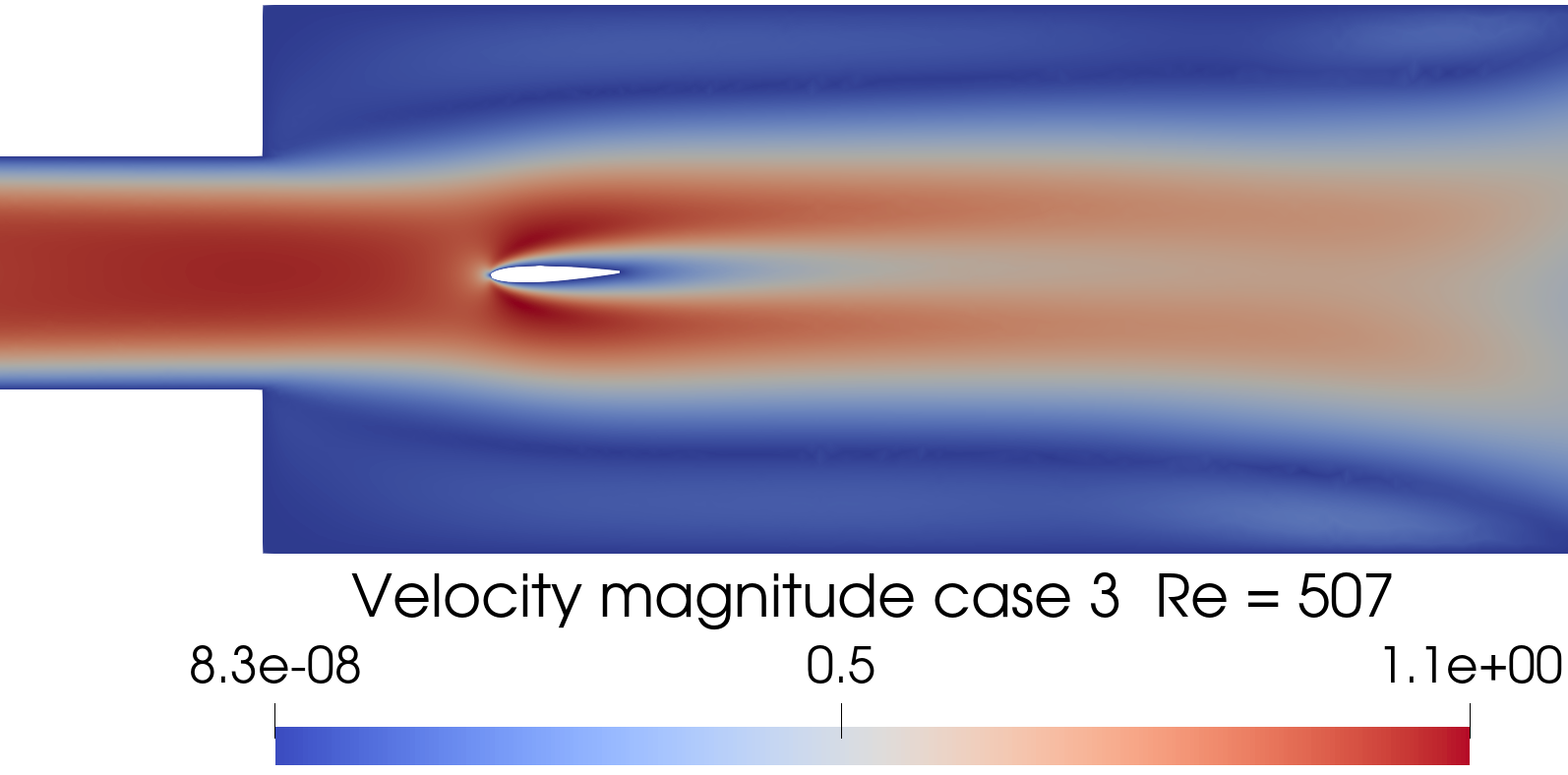}\hfill
\includegraphics[width=.48\textwidth]{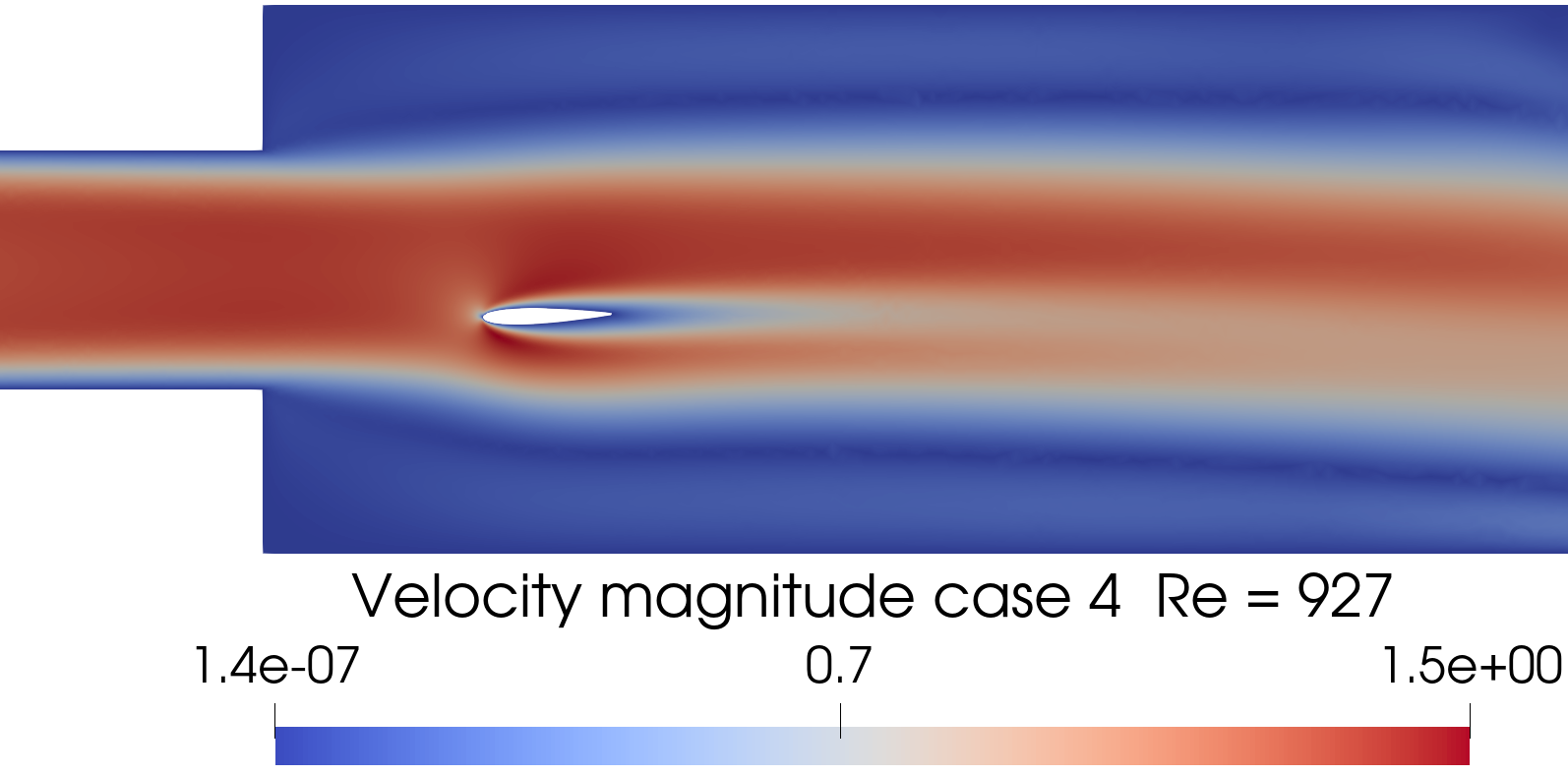}\\

\includegraphics[width=.48\textwidth]{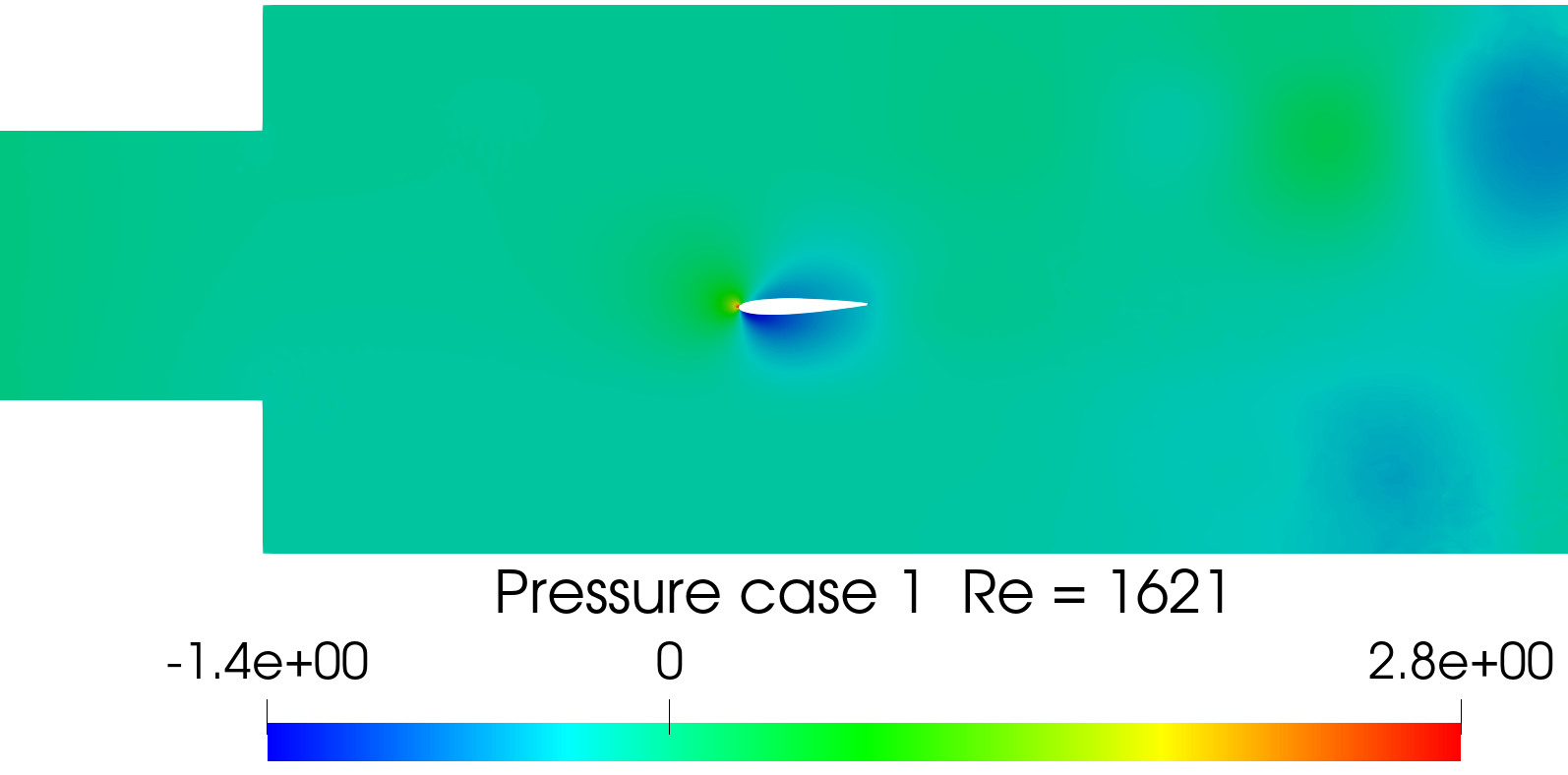}\hfill
\includegraphics[width=.48\textwidth]{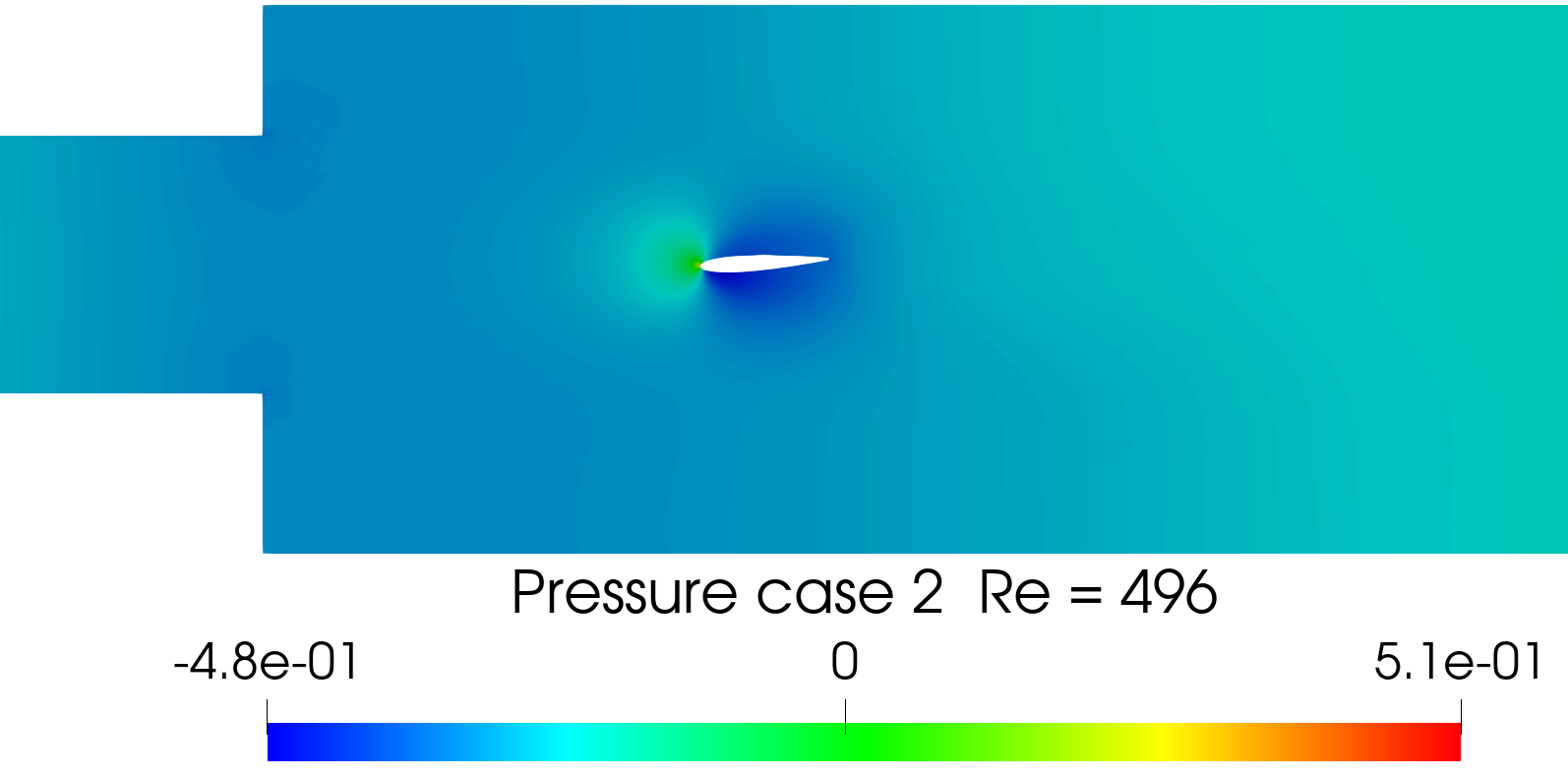}\\

\includegraphics[width=.48\textwidth]{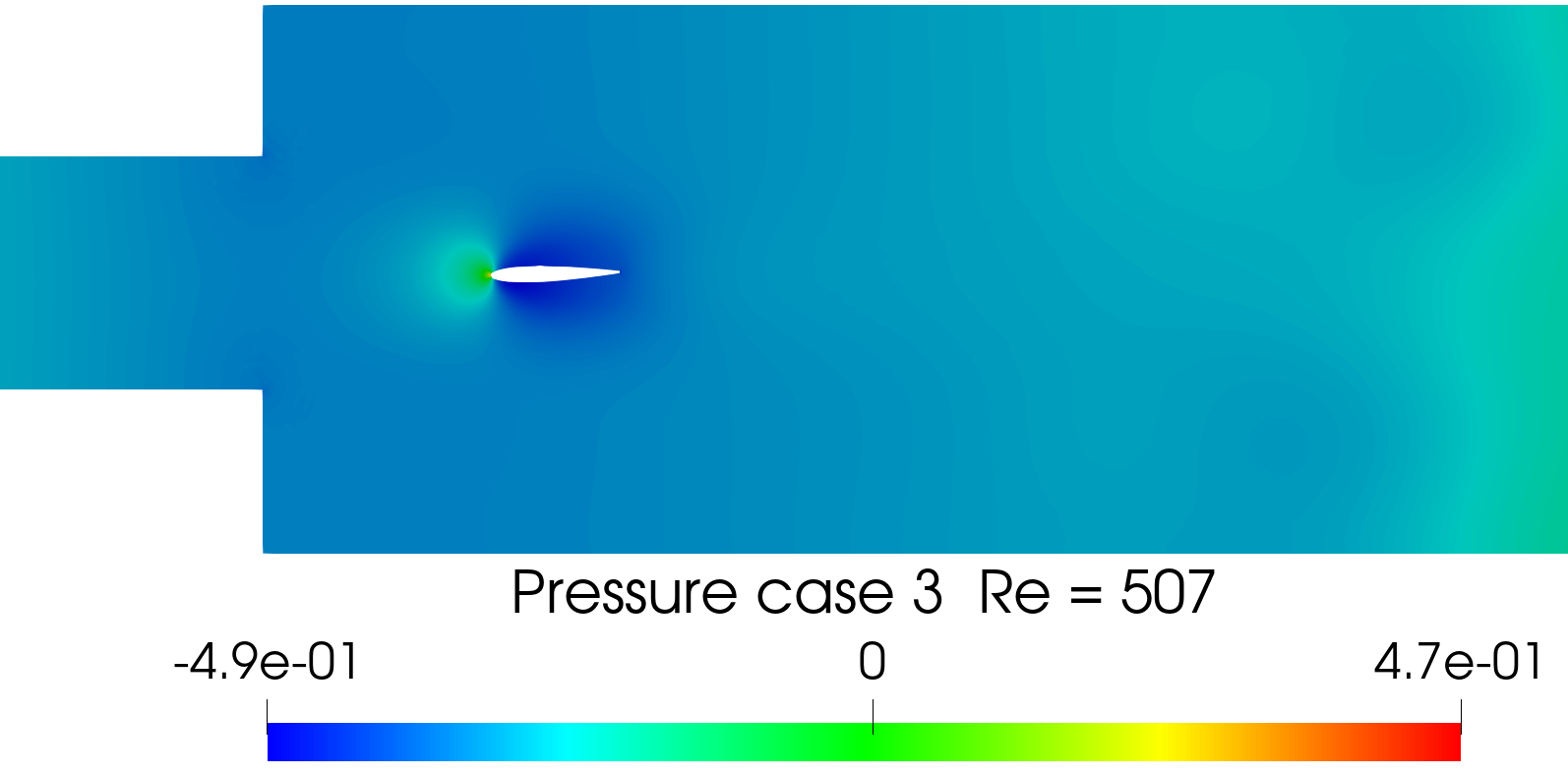}\hfill
\includegraphics[width=.48\textwidth]{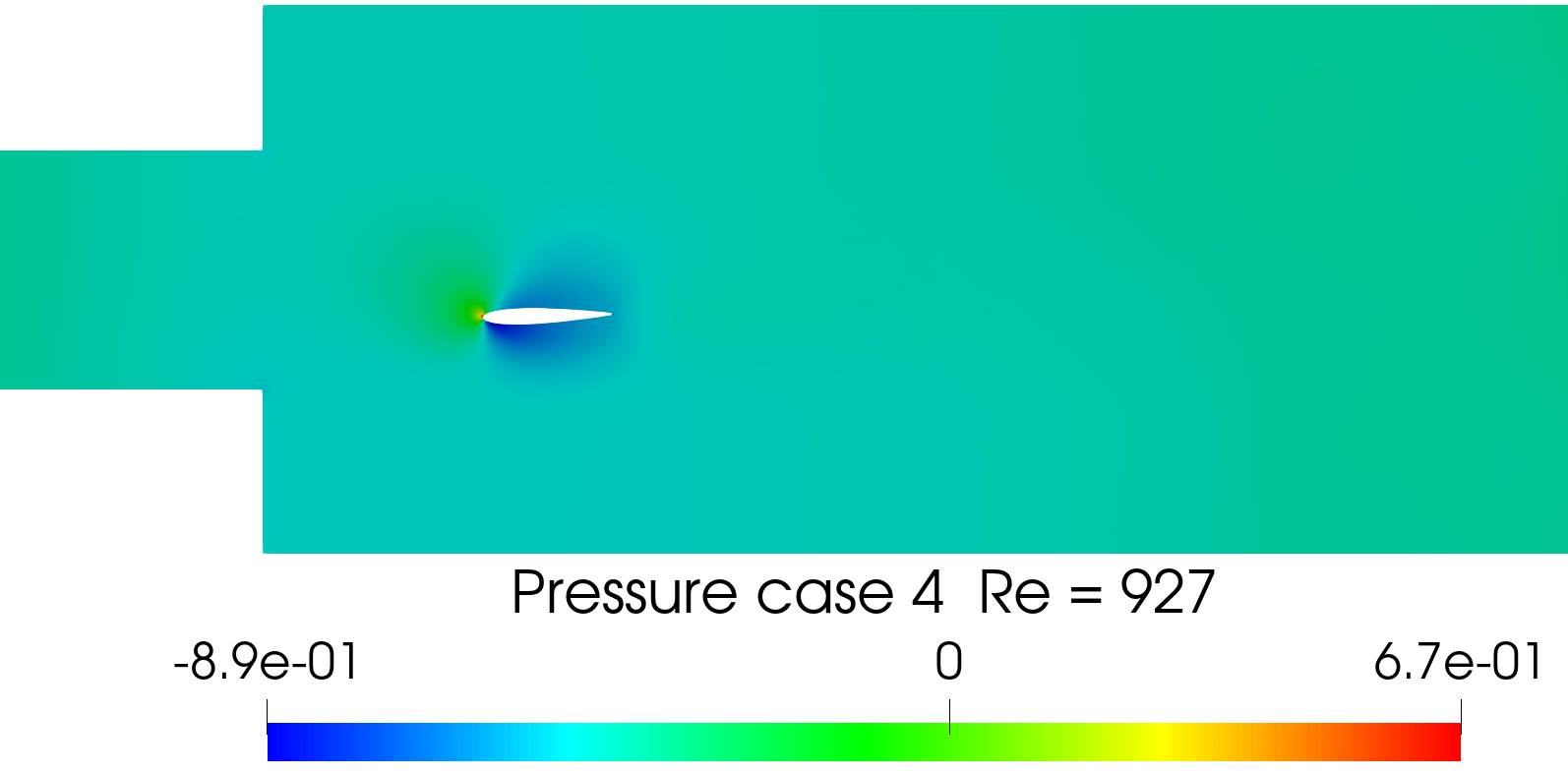}\\
\caption{Module of the velocity fields (on the left) and pressure fields (on the right)
evaluated at the last time instant of $4$ different simulations. The corresponding parameters
are reported in \autoref{table:sim_params}.}
\label{fig:naca_press_vel}
\end{figure}

The lift ($C_{L}$) and drag ($C_{D}$) coefficients are evaluated when
stationary or periodic regimes are reached, starting from the values
of pressure and viscous stresses evaluated on the nodes close to the
airfoil. After this sensitivity analysis is carried out. First the AS
method is applied. The gradients necessary for the application of the
AS method are obtained from the Gaussian process regression of the
model functions $C_{L}$ and $C_{D}$ on the whole parameters'
domain. The eigenvalues of the uncentered covariance matrix for the
lift and drag coefficients suggest the presence of a one-dimensional
active subspace in both cases.

The plots of the first active eigenvector components are useful as
sensitivity measures, see \autoref{fig:evecs_lift_drag}. The greater
the absolute value of a component is, the greater is its influence on
the model function. We observe that the lift coefficient is influenced
mainly by the vertical position of the airfoil and the angle of
attack, while the drag coefficient depends mainly on the initial
velocity, and secondarily on the viscosity and on the angle of
attack.

As one could expect from physical considerations, the angle of attack
affects both drag and lift coefficients, while the viscosity, which
governs the wall stresses, is relevant for the evaluation of the $C_D$.
The vertical position of the airfoil with respect to the symmetric
axis of the section of the duct after the area expansion also greatly
affects both coefficients, and this is mainly due to the fact that the
fluid flow conditions change drastically between the core, where the
speed is higher, and the one close to the wall of the duct, where the
speed tends to zero. On the other hand, the horizontal translation has
almost no impact on the results, given the regularity of the fluid
flow along the $x$-axis for the considered range of $x_0$.
Moreover, the non-symmetric behaviour of the upper and lower
parameters which determine the opening of the channel is due to the
non-symmetric choice of the range considered for the angle of attack.

\begin{figure}[htb!]
\centering
\includegraphics[width=0.49\textwidth]{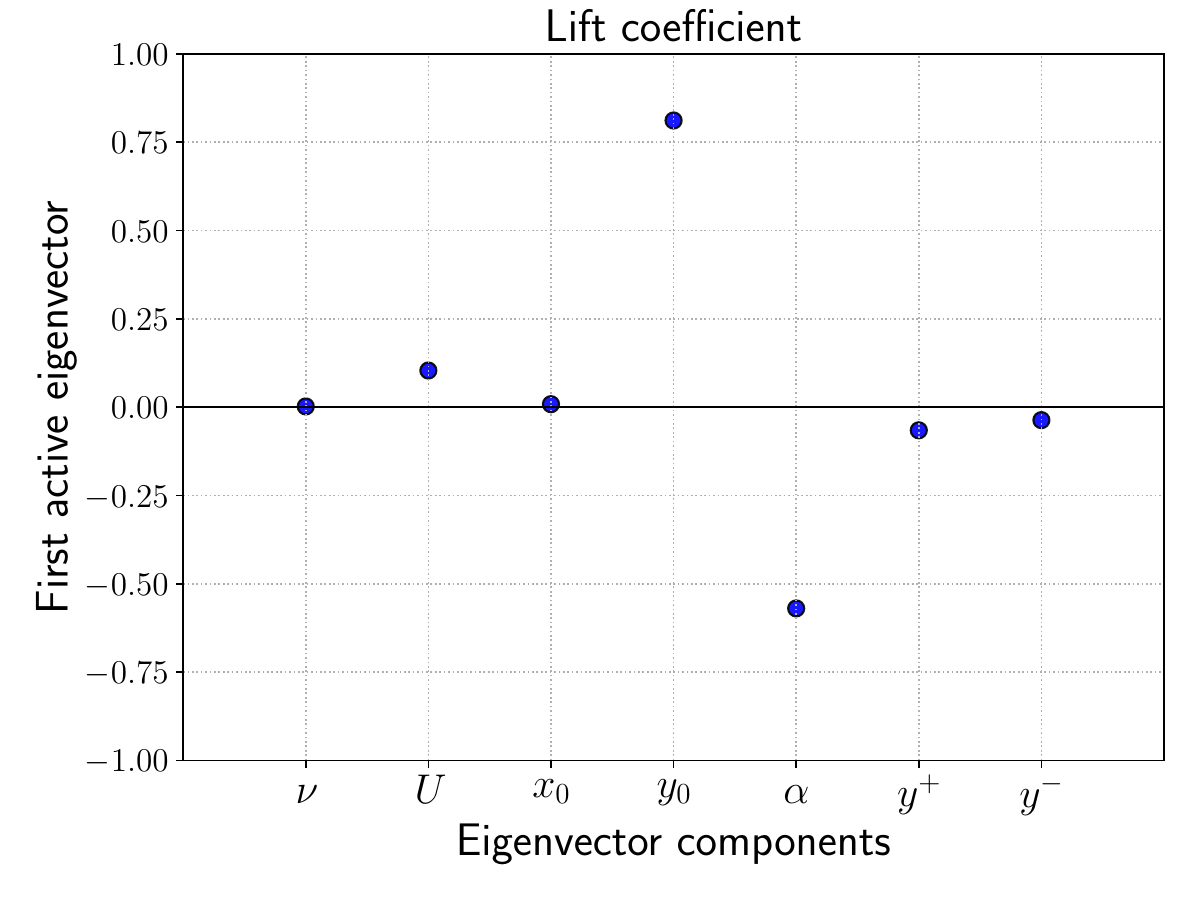}\hfill
\includegraphics[width=0.49\textwidth]{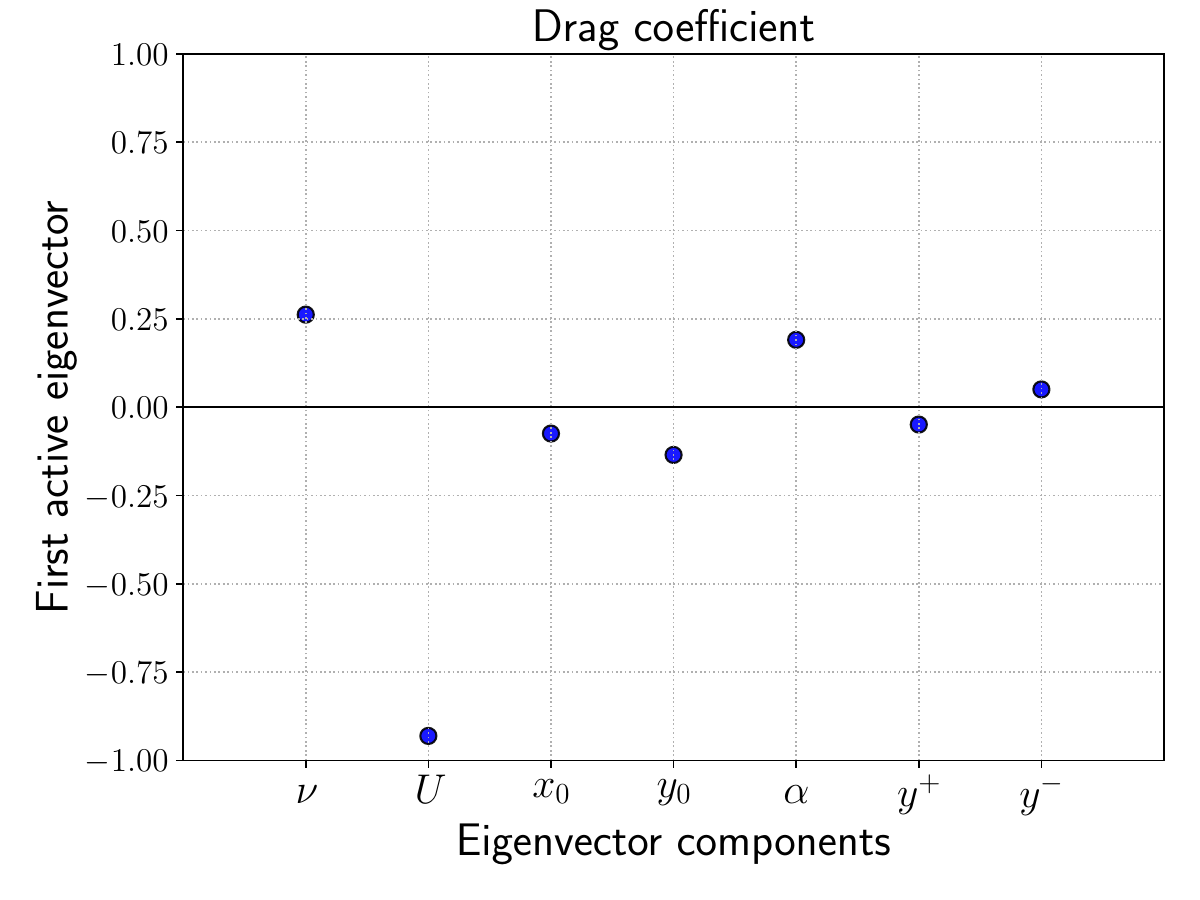}
\caption{Components of the first active eigenvector for the lift
  coefficient (on the left), and for the drag
  coefficient (on the right). Values near $0$ suggest little sensitivity for the
  target function.}
\label{fig:evecs_lift_drag}
\end{figure}

The KAS method was applied with $1500$ features. In order to compare
the AS and KAS methods $5$-fold cross validation was implemented. The
score of cross validation is the relative root mean square error
(RRMSE) defined in \autoref{eq:RRMSE_discrete}.

\begin{figure}[htb!]
\centering
\includegraphics[width=0.49\textwidth]{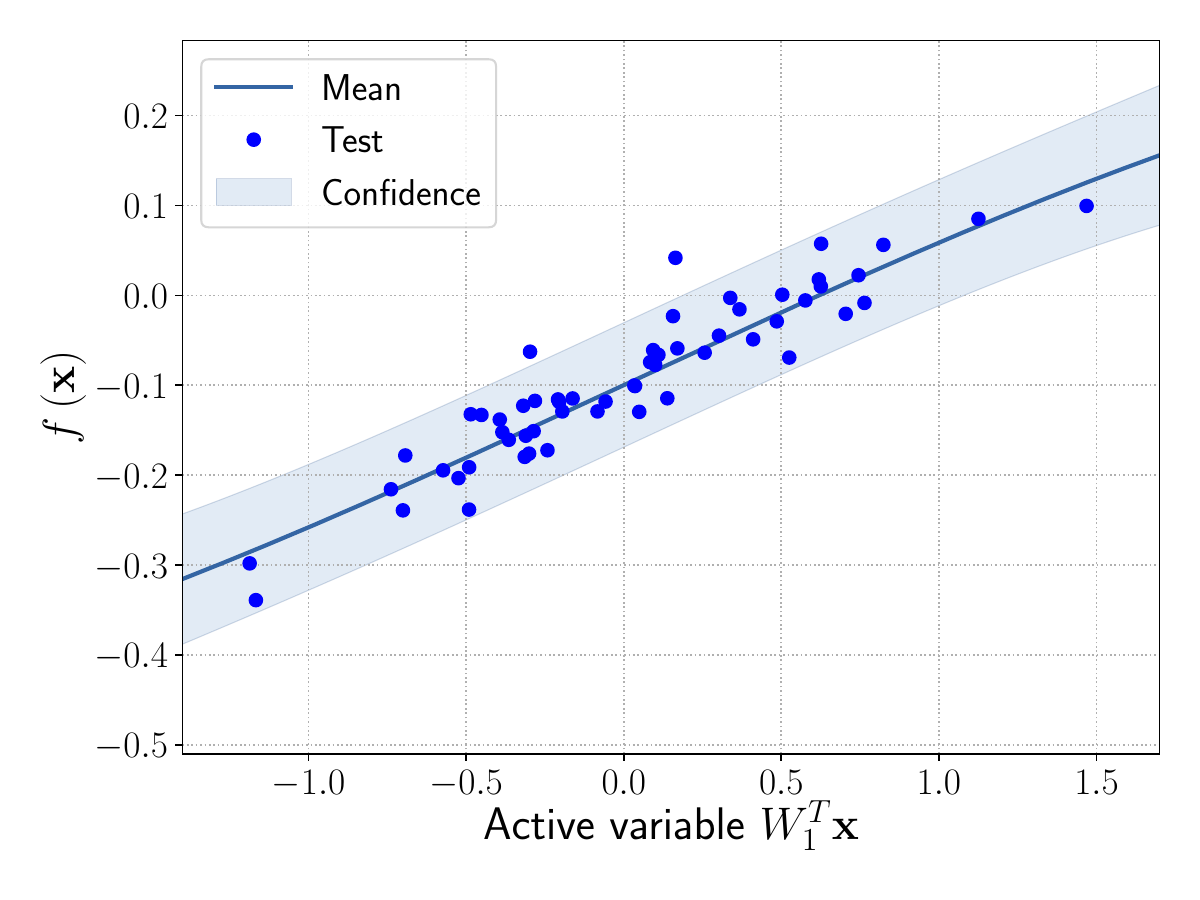}\hfill
\includegraphics[width=0.49\textwidth]{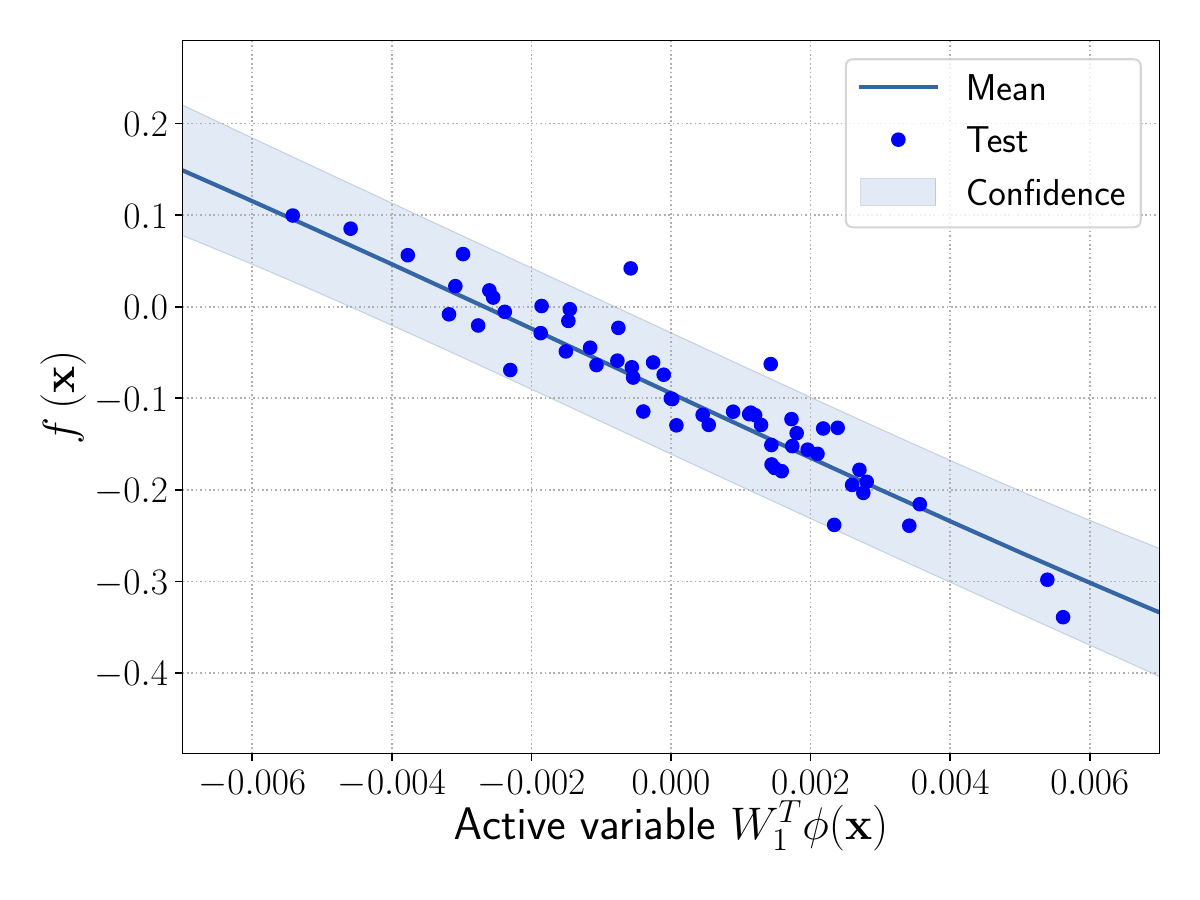}
\caption{Comparison between the sufficiency summary plots obtained
  from the application of AS and KAS methods for the lift coefficient
  $C_L$ defined in \autoref{eq:Lift}. The left plot refers to AS,
  the right plot to KAS. With the blue solid line we depict the
  posterior mean of the GP, with the shadow area the c$68\%$ confidence intervals, and with the blue dots the testing points.}
\label{fig:ssp_lift}
\end{figure}

\begin{figure}[htb!]
\centering
\includegraphics[width=0.49\textwidth]{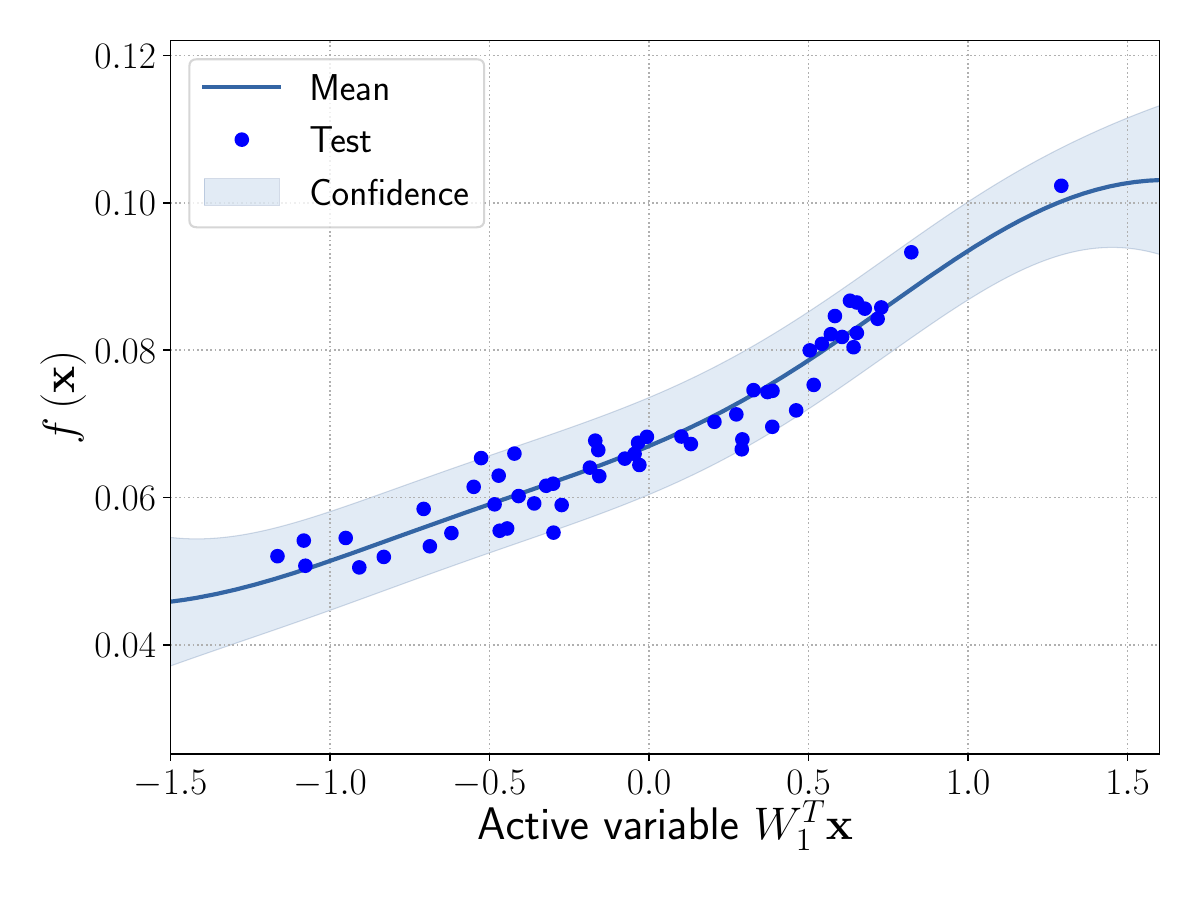}\hfill
\includegraphics[width=0.49\textwidth]{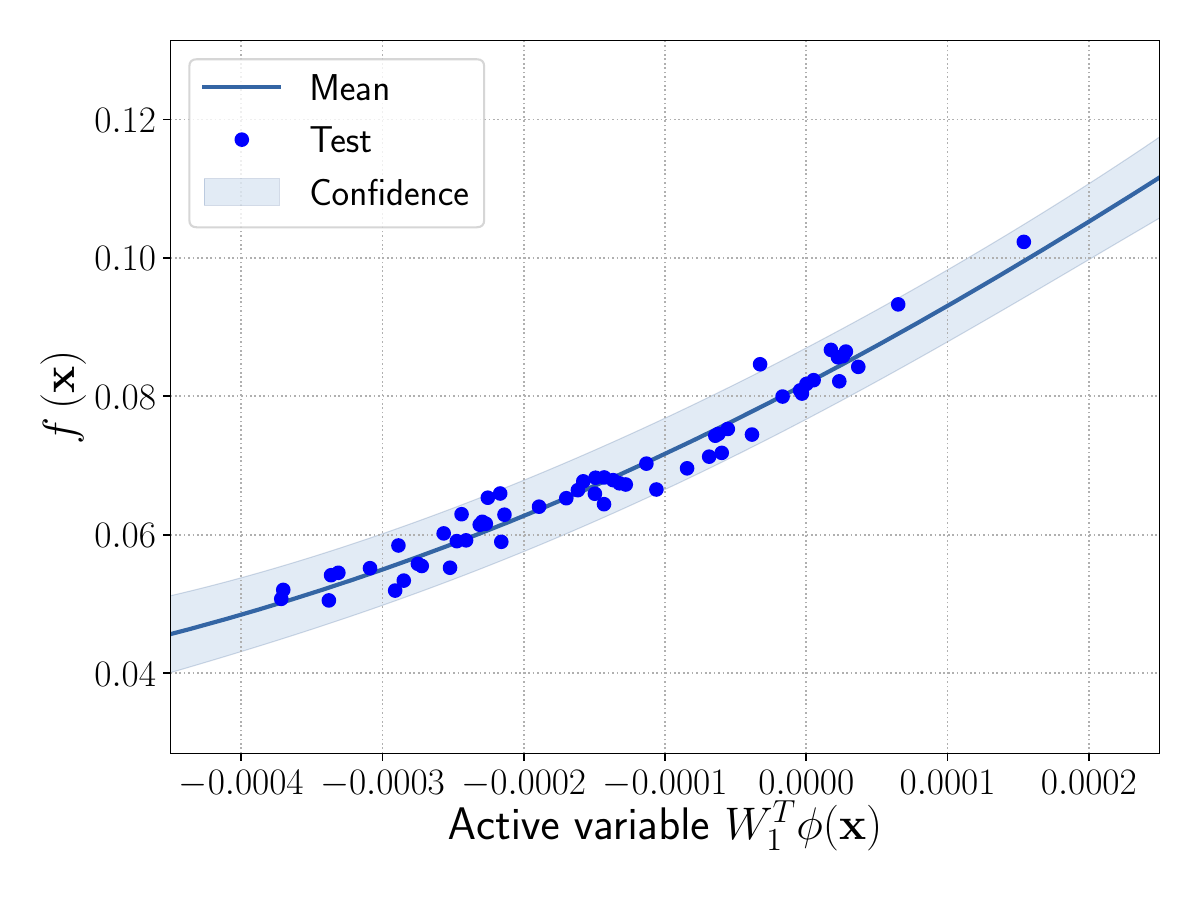}
\caption{Comparison between the sufficiency summary plots obtained
  from the application of AS and KAS methods for the drag coefficient
  $C_D$ defined in \autoref{eq:Drag}. The left plot refers to AS,
  the right plot to KAS. With the blue solid line we depict the
  posterior mean of the GP, with the shadow area the c$68\%$ confidence intervals, and with the blue dots the testing points.}
\label{fig:ssp_drag}
\end{figure}

The Gaussian process regressions for the two methods are shown in
\autoref{fig:ssp_lift} for the lift coefficient, and in
\autoref{fig:ssp_drag} for the drag coefficient. They were obtained as
a single step of $5$-fold cross validation with one fifth of the $285$
samples used as test set. The spectral distribution of the feature map
is the Gaussian distribution for the lift, and the Beta for the drag, respectively.
The RRMSE mean and standard deviation from $5$-fold cross validation,
are reported for different active dimensions in
\autoref{tab:rms_naca}. The feature map from Equation~\eqref{eq:feature_map} was
adopted. The hyperparameters of the spectral distributions were tuned
with logarithmic grid-search with $5$-fold cross validation as
described in Algorithm~\ref{algo: tuning}.

Regarding the drag coefficient, the relative gain using the KAS method reaches
the $19.2$\% on average when employing the Beta spectral measure for the
definition of the feature map. The relative gain of the one
dimensional response surface built with GPR from the KAS method is
$7$\% on average for the lift coefficient. This result could be
due to the higher noise in the evaluation of the $C_{L}$. In this case
the relative gain increases when the dimension of the
response surface increases to $2$ with a gain of $14.6$\%. A
slight reduction of the AS RRMSE relative to the drag
coefficient is ascertained when increasing the dimension of the response surface.

\begin{table}[htb!]
\centering
\caption{Summary of the results for AS and KAS procedures. In bold the best results.\label{tab:rms_naca}}
%\footnotesize
\begin{tabular}{ c c c c c c c }
\hline
\hline
\multirow{2}{*}{Method} &  \multirow{2}{*}{Dim} & Feature & Lift spectral & \multirow{2}{*}{RRMSE Lift} & Drag spectral &\multirow{2}{*}{RRMSE Drag} \\
  & & space dim & distribution & & distribution & \\
\hline
\hline
\rowcolor{Gray}
AS  & 1 & - & - & 0.37 $\pm$ 0.09 & - & 0.268 $\pm$ 0.032 \\
KAS & 1 & 1500 & $\mathcal{N}(0, \lambda I_{d})$& \textbf{0.344} $\pm$ 0.048 & $\text{Beta}(\alpha, \beta)$ & \textbf{0.218} $\pm$ 0.045 \\
\hline
\rowcolor{Gray}
AS  & 2 & - & - & 0.384 $\pm$ 0.073 & - & 0.183 $\pm$ 0.027 \\
KAS & 2  & 1500 & $\mathcal{N}(0, \lambda I_{d})$ & \textbf{0.328} $\pm$ 0.071  & $\text{Beta}(\alpha, \beta)$ & \textbf{0.17} $\pm$ 0.02 \\
\hline
\hline
\end{tabular}
\end{table}

\section{Conclusions and perspectives}
\label{sec:the_end}
In this work we presented a new nonlinear extension of the active
subspaces property that introduces Kernel-based Active Subspaces (KAS). The
method exploits random Fourier features to find active
subspaces on high-dimensional feature spaces. We
tested the new method over $5$ different benchmarks of increasing
complexity, and we provided pseudo-codes for every aspects of the
proposed kernel-extension. The
tested model functions range from scalar to vector-valued. We also
provide a CFD application discretized by the Discontinuous Galerkin
method. We compared the kernel-based active subspaces to the standard
linear active subspaces and we observed in all the cases an increment
of the accuracy of the Gaussian response surfaces built over the reduced
parameter spaces.
The most interesting results regard the possibility to apply the KAS method
when an active subspace does not exist. This was shown
for radial symmetric model functions.

Future developments will involve the study of more efficient
procedures for tuning the hyperparameters of the spectral
distribution. Other possible advances could be done finding an
effective back-mapping from the targets to the actual parameters in
the full original space. This could promote the implementation of
optimization algorithms or other parameter studies enhanced by the
kernel-based active subspaces extension.

\appendix
\section{Appendix - Proof details}
\label{sec:appendix}

In this section we provide an expanded version of the proof of \autoref{theo:existence}.

\begin{proof}[Proof of \autoref{theo:existence}: existence of an active subspace]
The proof is remodeled from~\cite{parente2020generalized,
  zahm2020gradient}, and it is developed in five steps:
\begin{enumerate}
\item Since $R_{V}\in\mathcal{M}(d,d)$ is symmetric positive definite
  there exists a basis of eigenvectors
  $(\mathbf{w}_{i})_{i\in\{1,\dots, d\}}$ and a corresponding set of positive
  eigenvalues $(\beta_{i})_{i\in\{1,\dots, d\}}$ such that
\label{eq:metric_decomposition}
\begin{equation}
R_{V}=\sum^{d}_{i=1}\beta_{i}\,\mathbf{w}_{i}\otimes\mathbf{w}_{i}.
\end{equation}
\item Let us define the ridge approximation error as
\begin{gather}
    \mathbf{e}=\lVert \mathbf{f}-\mathbf{h}\circ
P_{r}\rVert_{L^{2}(\mathbb{R}^{m}, \mathcal{B}(\mathbb{R}^{m}),
  \rho;V)} = \RA{ \RA{\mathbb{E}_{P}}\left[ \lVert(
  \mathbf{f}(\mathbf{X})-\mathbf{h}(P_{r}(\mathbf{X}))\rVert^{2}_{R_V}\right].
}
\end{gather}
Then we can decompose the error analysis for each component employing
the spectral decomposition~\eqref{eq:metric_decomposition}
\begin{align}
  \mathbb{E}_{P} \left[
\RA{  \lVert\mathbf{e}(\mathbf{X})\rVert^{2}_{R_V} }
  \right] &= \RA{\mathbb{E}_{P}}\left[\text{tr}(\left(R_{V}
\mathbf{e}(\mathbf{X})\right) \otimes\mathbf{e} (\mathbf{X})) \right]=
 \nonumber \\
&=\sum^{d}_{i=1}\beta_{i}\,\RA{\mathbb{E}_{P}}\left[\text{tr}(((\mathbf{w}_{i}
\otimes\mathbf{w}_{i})\mathbf{e}(\mathbf{X}))\otimes\mathbf{e}(\mathbf{X}))\right]=
  \nonumber \\
&=\sum^{d}_{i=1}\beta_{i}\,\RA{\mathbb{E}_{P}}\left[(\mathbf{w}_{i}\cdot\mathbf{e}
(\mathbf{X}))\,\text{tr}(\mathbf{w}_{i}\otimes\mathbf{e}(\mathbf{X}))\right]=
  \nonumber \\
&=\sum^{d}_{i=1}\beta_{i}\,\RA{\mathbb{E}_{P}}\left[(\mathbf{w}_{i}\cdot\mathbf{e}(\mathbf{X}))^{2}\right],
\end{align}
so we can define
$e_{i}(\mathbf{X})=\mathbf{w}_{i}\cdot\mathbf{e}(\mathbf{X})=f_{i}(\mathbf{X})-h_{i}(P_{r}(\mathbf{X})),\,\forall
i\in\{1, \dots, d\}$ and treat each component separately.
\item The next step involves the application of
  Lemma~\autoref{lemma:subspace_P_inequality} to the scalar functions
  $f_{i}(\mathbf{X})-h_{i}(P_{r}(\mathbf{X})),\,\forall i\in\{1,
  \dots, d\}$
\begin{align}
  \label{eq:poincarè_step}
\RA{\mathbb{E}_{P}}\left[\left(f_{i}(\mathbf{X})-h_{i}(P_{r}(\X))\right)^{2}\right]
  & =
    \RA{\mathbb{E}_{P}}\left[\RA{\mathbb{E}_{P}}\left[\left(f_i
    (\X)-h_{i}(P_{r} (\X))\right)^{2}|\sigma(P_{r})\right]\right]
  \nonumber \\
&\leq\RA{\mathbb{E}_{P}}\left[C_{p}(\rho, P_{r}(\mathbf{X}))
                 \RA{\mathbb{E}_{P}}\left[\lVert(I-P_{r}^{T})\nabla
                 f_{i}(\mathbf{X})\rVert^{2}_{2}|\sigma(P_{r})\right]\right]
  \nonumber \\
&\leq \RA{\mathbb{E}_{P}}\left[C_{p}(\rho,
                 P_{r}(\mathbf{X}))\right]^{\frac{1}{p}}\,
                 \RA{\mathbb{E}_{P}}\left[\lVert(I - P_{r}^{T})\nabla
                 f_{i}(\mathbf{X})\rVert^{2}_{2}\right]^{\frac{1}{q}} ,
\end{align}
where we used the H{\"o}lder inequality with indexes $(p, q)=(\infty,
1)$ when $\rho$ belongs to the first and second classes of
Assumption~\autoref{ass:pdf}, and $(p, q)=(\frac{\tau+1}{\tau}, 1+\tau)$
when $\rho$ belongs to the third class.

Then we can bound
$\RA{\mathbb{E}_{P}}\left[(C_{p},
  P_{r}(\mathbf{X}))\right]^{\frac{1}{p}}$ with a constant
$C(C_{p}(\rho, P_{r}(\mathbf{X})))$ which depends on the class of
$\rho$ (see Lemma 3.1, Lemma 4.2, Lemma 4.3, Lemma 4.4 and Theorem 4.5
of~\cite{parente2020generalized}) as follows
\begin{align}
&\RA{\mathbb{E}_{P}} \left[C_{p}(\rho,
                P_{r}(\mathbf{X}))\right]^{\frac{1}{p}} \,
                \RA{\mathbb{E}_{P}}\left[\lVert(I - P_{r}^{T}) \nabla
                f_{i}(\mathbf{X})\rVert^{2}_{2}
                \right]^{\frac{1}{q}}\leq \nonumber \\
&\leq C(C_{p}(\rho, P_{r}(\mathbf{X})))\, \text{tr}(\RA{\mathbb{E}_{P}}
\left[(I - P_r^T)\nabla f_i (\X)(\nabla f_i(\X))^T (I -P_r) \right]^{\frac{1}{q}})= \nonumber \\
&= C(C_{p}(\rho, P_{r}(\mathbf{X})))\, \text{tr}((I - P_{r}^{T})
\RA{\mathbb{E}_{P}}\left[\nabla f_{i}(\X)(\nabla f_{i}(\X))^{T} \right](I - P_{r}))^{\frac{1}{q}}.
\end{align}

\item The spectral decomposition~\eqref{eq:metric_decomposition} is
  employed again and the covariance matrix $H$ is introduced in the
  last equation
\begin{align}
&\RA{\mathbb{E}_{P}} \left[
\RA{                \lVert\mathbf{e}(\mathbf{X})\rVert^{2}_{R_V} } \right]\leq  \nonumber \\
&\leq\sum^{d}_{i=1}\beta_{i}\,C(C_{p}(\rho, P_{r}(\mathbf{X})))\, \text{tr}((I - P_{r}^{T})\RA{\mathbb{E}_{P}}\left[((\nabla \mathbf{f}(\mathbf{X}))^{T}\mathbf{w}_{i})\otimes((\nabla\mathbf{f}(\mathbf{X}))^{T}\mathbf{w}_{i})\right](I - P_{r}))^{\frac{1}{q}}= \nonumber \\
&=C(C_{p}(\rho, P_{r}(\mathbf{X})))\, \text{tr}((I - P_{r}^{T})\RA{\mathbb{E}_{P}}\left[(\nabla \mathbf{f}(\mathbf{X}))^{T}\left(\sum^{d}_{i=1}\beta_{i}^{q}\,\mathbf{w}_{i}\otimes \mathbf{w}_{i}\right)\nabla \mathbf{f}(\mathbf{X})\right](I - P_{r}))^{\frac{1}{q}}= \nonumber \\
&=C(C_{p}(\rho, P_{r}(\mathbf{X})))\, \text{tr}((I - P_{r}^{T})\RA{\mathbb{E}_{P}}\left[(\nabla \mathbf{f}(\mathbf{X}))^{T}R_{V}(\rho)\nabla \mathbf{f}(\mathbf{X})\right](I - P_{r}))^{\frac{1}{q}}= \nonumber \\
&=C(C_{p}(\rho, P_{r}(\mathbf{X})))\, \text{tr}((I - P_{r}^{T})H(I - P_{r}))^{\frac{1}{q}},
\end{align}
where $R_{V}(\rho)$ is the original metric matrix if $\rho$ belongs to the first or second class of Assumption~\ref{ass:pdf} and is equal to
\begin{equation}
\sum^{d}_{i=1}\beta_{i}^{1+\tau}\,\mathbf{w}_{i}\otimes \mathbf{w}_{i} ,
\end{equation}
if $\rho$ belongs to the third class.
\item Finally the bound in the statement of the theorem is recovered
  solving the following minimization problem with classical model
  reduction arguments employing singular value decomposition (SVD)
\begin{equation}
\Tilde{P}_{r} = \argmin_{P_{r}\in\mathcal{O}(m, m)}\text{tr}((I - P_{r}^{T})H(I - P_{r})).
\end{equation}
\end{enumerate}
\end{proof}

\section*{Acknowledgements}
This work was partially supported by an industrial Ph.D. grant sponsored
by Fincantieri S.p.A. (IRONTH Project), by MIUR (Italian ministry for
university and research)
through FARE-X-AROMA-CFD project, and partially funded by European
Union Funding for Research and Innovation --- Horizon 2020 Program --- in
the framework of European Research Council Executive Agency: H2020 ERC
CoG 2015 AROMA-CFD project 681447 ``Advanced Reduced Order Methods with
Applications in Computational Fluid Dynamics'' P.I. Professor Gianluigi Rozza.

\bibliographystyle{abbrvurl}
%\bibliography{biblio_nas_dg}

\begin{thebibliography}{10}

\bibitem{OpenFOAM}
\url{https://openfoam.org/}.

\bibitem{gmsh}
Gmesh, a three-dimensional finite element mesh generator with built-in pre- and
  post-processing facilities.
\newblock \url{http://gmsh.info/}.

\bibitem{gpyopt2016}
{GPyOpt}: A bayesian optimization framework in python.
\newblock \url{http://github.com/SheffieldML/GPyOpt}, 2016.

\bibitem{HopeFOAM}
Hopefoam extension of {OpenFOAM}.
\newblock \url{https://github.com/HopeFOAM/HopeFOAM}, Last release 15th
  September 2017.

\bibitem{aguiar2018dynamic}
I.~P. Aguiar.
\newblock {Dynamic Active Subspaces: a Data-Driven Approach to Computing
  Time-Dependent Active Subspaces in Dynamical Systems}.
\newblock Master's thesis, University of Colorado Boulder, 2018.

\bibitem{arnold1982interior}
D.~N. Arnold.
\newblock An interior penalty finite element method with discontinuous
  elements.
\newblock {\em SIAM journal on numerical analysis}, 19(4):742--760, 1982.
\newblock \href {https://doi.org/10.1137/0719052} {\path{doi:10.1137/0719052}}.

\bibitem{barshan2011supervised}
E.~Barshan, A.~Ghodsi, Z.~Azimifar, and M.~Z. Jahromi.
\newblock {Supervised principal component analysis: Visualization,
  classification and regression on subspaces and submanifolds}.
\newblock {\em Pattern Recognition}, 44(7):1357--1371, 2011.

\bibitem{berlinet2011reproducing}
A.~Berlinet and C.~Thomas-Agnan.
\newblock {\em {Reproducing kernel Hilbert spaces in probability and
  statistics}}.
\newblock Springer Science \& Business Media, 2011.

\bibitem{bobrowski2005functional}
A.~Bobrowski.
\newblock {\em Functional analysis for probability and stochastic processes: an
  introduction}.
\newblock Cambridge University Press, 2005.

\bibitem{Bridges2019ActiveMA}
R.~A. Bridges, A.~D. Gruber, C.~Felder, M.~E. Verma, and C.~Hoff.
\newblock {Active Manifolds: A non-linear analogue to Active Subspaces}.
\newblock In {\em Proceedings of International Conference on Machine Learning},
  2019.

\bibitem{brunton2019data}
S.~L. Brunton and J.~N. Kutz.
\newblock {\em {Data-driven science and engineering: Machine learning,
  dynamical systems, and control}}.
\newblock Cambridge University Press, 2019.

\bibitem{constantine2015active}
P.~G. Constantine.
\newblock {\em {Active subspaces: Emerging ideas for dimension reduction in
  parameter studies}}, volume~2 of {\em SIAM Spotlights}.
\newblock SIAM, 2015.

\bibitem{constantine2017global}
P.~G. Constantine and P.~Diaz.
\newblock Global sensitivity metrics from active subspaces.
\newblock {\em Reliability Engineering \& System Safety}, 162:1--13, 2017.

\bibitem{constantine2014active}
P.~G. Constantine, E.~Dow, and Q.~Wang.
\newblock Active subspace methods in theory and practice: applications to
  kriging surfaces.
\newblock {\em SIAM Journal on Scientific Computing}, 36(4):A1500--A1524, 2014.

\bibitem{constantine2015exploiting}
P.~G. Constantine, M.~Emory, J.~Larsson, and G.~Iaccarino.
\newblock {Exploiting active subspaces to quantify uncertainty in the numerical
  simulation of the HyShot II scramjet}.
\newblock {\em Journal of Computational Physics}, 302:1--20, 2015.

\bibitem{cook2005sufficient}
R.~D. Cook and L.~Ni.
\newblock Sufficient dimension reduction via inverse regression: A minimum
  discrepancy approach.
\newblock {\em Journal of the American Statistical Association},
  100(470):410--428, 2005.

\bibitem{cui2020active}
C.~Cui, K.~Zhang, T.~Daulbaev, J.~Gusak, I.~Oseledets, and Z.~Zhang.
\newblock Active subspace of neural networks: Structural analysis and universal
  attacks.
\newblock {\em SIAM Journal on Mathematics of Data Science}, 2(4):1096--1122,
  2020.
\newblock \href {https://doi.org/10.1137/19M1296070}
  {\path{doi:10.1137/19M1296070}}.

\bibitem{demo2021hull}
N.~Demo, M.~Tezzele, A.~Mola, and G.~Rozza.
\newblock {Hull Shape Design Optimization with Parameter Space and Model
  Reductions, and Self-Learning Mesh Morphing}.
\newblock {\em Journal of Marine Science and Engineering}, 9(2):185, 2021.
\newblock \href {https://doi.org/10.3390/jmse9020185}
  {\path{doi:10.3390/jmse9020185}}.

\bibitem{demo2019cras}
N.~Demo, M.~Tezzele, and G.~Rozza.
\newblock {A non-intrusive approach for reconstruction of POD modal
  coefficients through active subspaces}.
\newblock {\em Comptes Rendus M\'ecanique de l'Acad\'emie des Sciences,
  DataBEST 2019 Special Issue}, 347(11):873--881, November 2019.
\newblock \href {https://doi.org/10.1016/j.crme.2019.11.012}
  {\path{doi:10.1016/j.crme.2019.11.012}}.

\bibitem{demo2020asga}
N.~Demo, M.~Tezzele, and G.~Rozza.
\newblock {A Supervised Learning Approach Involving Active Subspaces for an
  Efficient Genetic Algorithm in High-Dimensional Optimization Problems}.
\newblock {\em SIAM Journal on Scientific Computing}, 43(3):B831--B853, 2021.
\newblock \href {https://doi.org/10.1137/20M1345219}
  {\path{doi:10.1137/20M1345219}}.

\bibitem{diaz2018modified}
P.~Diaz, P.~Constantine, K.~Kalmbach, E.~Jones, and S.~Pankavich.
\newblock {A modified SEIR model for the spread of Ebola in Western Africa and
  metrics for resource allocation}.
\newblock {\em Applied Mathematics and Computation}, 324:141--155, 2018.

\bibitem{gazdag1976time}
J.~Gazdag.
\newblock Time-differencing schemes and transform methods.
\newblock {\em Journal of Computational Physics}, 20(2):196--207, 1976.

\bibitem{ghoreishi2019adaptive}
S.~F. Ghoreishi, S.~Friedman, and D.~L. Allaire.
\newblock {Adaptive Dimensionality Reduction for Fast Sequential Optimization
  With Gaussian Processes}.
\newblock {\em Journal of Mechanical Design}, 141(7):071404, 2019.

\bibitem{gpy2014}
{GPy}.
\newblock {GPy: A Gaussian process framework in Python}.
\newblock \url{http://github.com/SheffieldML/GPy}, since 2012.

\bibitem{guo2018reduced}
M.~Guo and J.~S. Hesthaven.
\newblock {Reduced order modeling for nonlinear structural analysis using
  Gaussian process regression}.
\newblock {\em Computer Methods in Applied Mechanics and Engineering},
  341:807--826, 2018.

\bibitem{heas2020generalized}
P.~H{\'e}as, C.~Herzet, and B.~Combes.
\newblock Generalized kernel-based dynamic mode decomposition.
\newblock In {\em ICASSP 2020-2020 IEEE International Conference on Acoustics,
  Speech and Signal Processing (ICASSP)}, pages 3877--3881. IEEE, 2020.

\bibitem{hesthaven2016certified}
J.~S. Hesthaven, G.~Rozza, and B.~Stamm.
\newblock {\em {Certified Reduced Basis Methods for Parametrized Partial
  Differential Equations}}.
\newblock Springer, 2016.

\bibitem{hesthaven2007nodal}
J.~S. Hesthaven and T.~Warburton.
\newblock {\em {Nodal discontinuous Galerkin methods: algorithms, analysis, and
  applications}}.
\newblock Springer Science \& Business Media, 2007.

\bibitem{jefferson2015active}
J.~L. Jefferson, J.~M. Gilbert, P.~G. Constantine, and R.~M. Maxwell.
\newblock Active subspaces for sensitivity analysis and dimension reduction of
  an integrated hydrologic model.
\newblock {\em Computers \& geosciences}, 83:127--138, 2015.

\bibitem{ji2019quantifying}
W.~Ji, Z.~Ren, Y.~Marzouk, and C.~K. Law.
\newblock Quantifying kinetic uncertainty in turbulent combustion simulations
  using active subspaces.
\newblock {\em Proceedings of the Combustion Institute}, 37(2):2175--2182,
  2019.
\newblock \href {https://doi.org/10.1016/j.proci.2018.06.206}
  {\path{doi:10.1016/j.proci.2018.06.206}}.

\bibitem{ji2018shared}
W.~Ji, J.~Wang, O.~Zahm, Y.~M. Marzouk, B.~Yang, Z.~Ren, and C.~K. Law.
\newblock Shared low-dimensional subspaces for propagating kinetic uncertainty
  to multiple outputs.
\newblock {\em Combustion and Flame}, 190:146--157, 2018.

\bibitem{kevrekidis2015kernel}
I.~Kevrekidis, C.~Rowley, and M.~Williams.
\newblock {A kernel-based method for data-driven Koopman spectral analysis}.
\newblock {\em Journal of Computational Dynamics}, 2(2):247--265, 2015.

\bibitem{kundu2012fluid}
P.~K. Kundu, I.~M. Cohen, and D.~R. Dowling, editors.
\newblock {\em Fluid Mechanics (Fifth Edition)}.
\newblock Academic Press, Boston, 2012.
\newblock \href {https://doi.org/https://doi.org/10.1016/C2009-0-63410-3}
  {\path{doi:https://doi.org/10.1016/C2009-0-63410-3}}.

\bibitem{li1991sliced}
K.-C. Li.
\newblock Sliced inverse regression for dimension reduction.
\newblock {\em Journal of the American Statistical Association},
  86(414):316--327, 1991.

\bibitem{li2007sparse}
L.~Li.
\newblock Sparse sufficient dimension reduction.
\newblock {\em Biometrika}, 94(3):603--613, 2007.

\bibitem{li2019a}
Z.~Li, J.-F. Ton, D.~Oglic, and D.~Sejdinovic.
\newblock {Towards a unified analysis of random Fourier features}.
\newblock In {\em Proceedings of the 36th International Conference on Machine
  Learning}, volume~97 of {\em Proceedings of Machine Learning Research}, pages
  3905--3914. ICML, 2019.

\bibitem{lukaczyk2015surrogate}
T.~W. Lukaczyk.
\newblock {\em Surrogate Modeling and Active Subspaces for Efficient
  Optimization of Supersonic Aircraft}.
\newblock PhD thesis, Stanford University, 2015.

\bibitem{lukaczyk2014active}
T.~W. Lukaczyk, P.~Constantine, F.~Palacios, and J.~J. Alonso.
\newblock Active subspaces for shape optimization.
\newblock In {\em 10th AIAA multidisciplinary design optimization conference},
  page 1171, 2014.

\bibitem{meneghetti2021dimensionality}
L.~Meneghetti, N.~Demo, and G.~Rozza.
\newblock A dimensionality reduction approach for convolutional neural
  networks.
\newblock {\em arXiv preprint arXiv:2110.09163}, Submitted, 2021.

\bibitem{mika1999kernel}
S.~Mika, B.~Sch{\"o}lkopf, A.~J. Smola, K.-R. M{\"u}ller, M.~Scholz, and
  G.~R{\"a}tsch.
\newblock {Kernel PCA and de-noising in feature spaces}.
\newblock In {\em Advances in neural information processing systems}, pages
  536--542, 1999.

\bibitem{mohri2018foundations}
M.~Mohri, A.~Rostamizadeh, and A.~Talwalkar.
\newblock {\em Foundations of machine learning}.
\newblock MIT press, 2018.

\bibitem{nguyen2009implicit}
N.~C. Nguyen, J.~Peraire, and B.~Cockburn.
\newblock An implicit high-order hybridizable discontinuous galerkin method for
  linear convection--diffusion equations.
\newblock {\em Journal of Computational Physics}, 228(9):3232--3254, 2009.

\bibitem{palaci2018gaussian}
M.~Palaci-Olgun.
\newblock Gaussian process modeling and supervised dimensionality reduction
  algorithms via stiefel manifold learning.
\newblock Master's thesis, University of Toronto Institute of Aerospace
  Studies, 2018.

\bibitem{parente2020generalized}
M.~T. Parente, J.~Wallin, B.~Wohlmuth, et~al.
\newblock Generalized bounds for active subspaces.
\newblock {\em Electronic Journal of Statistics}, 14(1):917--943, 2020.

\bibitem{pazner2017stage}
W.~Pazner and P.-O. Persson.
\newblock {Stage-parallel fully implicit Runge--Kutta solvers for discontinuous
  Galerkin fluid simulations}.
\newblock {\em Journal of Computational Physics}, 335:700--717, 2017.
\newblock \href {https://doi.org/10.1016/j.jcp.2017.01.050}
  {\path{doi:10.1016/j.jcp.2017.01.050}}.

\bibitem{quarteroni2014reduced}
A.~Quarteroni and G.~Rozza.
\newblock {\em Reduced order methods for modeling and computational reduction},
  volume~9.
\newblock Springer, 2014.
\newblock \href {https://doi.org/10.1007/978-3-319-02090-7}
  {\path{doi:10.1007/978-3-319-02090-7}}.

\bibitem{rahimi2008random}
A.~Rahimi and B.~Recht.
\newblock Random features for large-scale kernel machines.
\newblock In {\em Advances in neural information processing systems}, pages
  1177--1184, 2008.

\bibitem{romor2021multi}
F.~Romor, M.~Tezzele, M.~Mrosek, C.~Othmer, and G.~Rozza.
\newblock {Multi-fidelity data fusion through parameter space reduction with
  applications to automotive engineering}.
\newblock {\em arXiv preprint arXiv:2110.14396}, Submitted, 2021.

\bibitem{romor2020athena}
F.~Romor, M.~Tezzele, and G.~Rozza.
\newblock {ATHENA: Advanced Techniques for High dimensional parameter spaces to
  Enhance Numerical Analysis}.
\newblock {\em Software Impacts}, 10:100133, 2021.
\newblock \href {https://doi.org/10.1016/j.simpa.2021.100133}
  {\path{doi:10.1016/j.simpa.2021.100133}}.

\bibitem{romor2021las}
F.~Romor, M.~Tezzele, and G.~Rozza.
\newblock {A local approach to parameter space reduction for regression and
  classification tasks}.
\newblock {\em arXiv preprint arXiv:2107.10867}, Submitted, 2021.

\bibitem{morhandbook2020}
G.~Rozza, M.~Hess, G.~Stabile, M.~Tezzele, and F.~Ballarin.
\newblock {Basic Ideas and Tools for Projection-Based Model Reduction of
  Parametric Partial Differential Equations}.
\newblock In P.~Benner, S.~Grivet-Talocia, A.~Quarteroni, G.~Rozza, W.~H.~A.
  Schilders, and L.~M. Silveira, editors, {\em Model Order Reduction},
  volume~2, chapter~1, pages 1--47. De Gruyter, Berlin, Boston, 2020.
\newblock \href {https://doi.org/10.1515/9783110671490-001}
  {\path{doi:10.1515/9783110671490-001}}.

\bibitem{rozza2018advances}
G.~Rozza, M.~H. Malik, N.~Demo, M.~Tezzele, M.~Girfoglio, G.~Stabile, and
  A.~Mola.
\newblock {Advances in Reduced Order Methods for Parametric Industrial Problems
  in Computational Fluid Dynamics}.
\newblock In R.~Owen, R.~de~Borst, J.~Reese, and P.~Chris, editors, {\em
  ECCOMAS ECFD 7 - Proceedings of 6th European Conference on Computational
  Mechanics (ECCM 6) and 7th European Conference on Computational Fluid
  Dynamics (ECFD 7)}, pages 59--76, Glasgow, UK, 2018.

\bibitem{russi2010uncertainty}
T.~M. Russi.
\newblock {\em Uncertainty quantification with experimental data and complex
  system models}.
\newblock PhD thesis, UC Berkeley, 2010.

\bibitem{salmoiraghi2016advances}
F.~Salmoiraghi, F.~Ballarin, G.~Corsi, A.~Mola, M.~Tezzele, and G.~Rozza.
\newblock Advances in geometrical parametrization and reduced order models and
  methods for computational fluid dynamics problems in applied sciences and
  engineering: Overview and perspectives.
\newblock {\em ECCOMAS Congress 2016 - Proceedings of the 7th European Congress
  on Computational Methods in Applied Sciences and Engineering}, 1:1013--1031,
  2016.
\newblock \href {https://doi.org/10.7712/100016.1867.8680}
  {\path{doi:10.7712/100016.1867.8680}}.

\bibitem{scholkopf1998nonlinear}
B.~Sch{\"o}lkopf, A.~Smola, and K.-R. M{\"u}ller.
\newblock Nonlinear component analysis as a kernel eigenvalue problem.
\newblock {\em Neural computation}, 10(5):1299--1319, 1998.

\bibitem{scholkopf2002learning}
B.~Sch{\"o}lkopf, A.~J. Smola, F.~Bach, et~al.
\newblock {\em Learning with kernels: support vector machines, regularization,
  optimization, and beyond}.
\newblock MIT press, 2002.

\bibitem{seshadri2018turbomachinery}
P.~Seshadri, S.~Shahpar, P.~Constantine, G.~Parks, and M.~Adams.
\newblock Turbomachinery active subspace performance maps.
\newblock {\em Journal of Turbomachinery}, 140(4):041003, 2018.
\newblock \href {https://doi.org/10.1115/1.4038839}
  {\path{doi:10.1115/1.4038839}}.

\bibitem{shahbazi2005explicit}
K.~Shahbazi.
\newblock An explicit expression for the penalty parameter of the interior
  penalty method.
\newblock {\em Journal of Computational Physics}, 205(2):401--407, 2005.
\newblock \href {https://doi.org/10.1016/j.jcp.2004.11.017}
  {\path{doi:10.1016/j.jcp.2004.11.017}}.

\bibitem{sriperumbudur2017approximate}
B.~Sriperumbudur and N.~Sterge.
\newblock {Approximate kernel PCA using random features: computational vs.
  statistical trade-off}.
\newblock {\em arXiv preprint arXiv:1706.06296}, 2017.

\bibitem{tezzele2018combined}
M.~Tezzele, F.~Ballarin, and G.~Rozza.
\newblock {Combined parameter and model reduction of cardiovascular problems by
  means of active subspaces and POD-Galerkin methods}.
\newblock In D.~Boffi, L.~F. Pavarino, G.~Rozza, S.~Scacchi, and C.~Vergara,
  editors, {\em Mathematical and Numerical Modeling of the Cardiovascular
  System and Applications}, volume~16 of {\em SEMA-SIMAI Series}, pages
  185--207. Springer International Publishing, 2018.
\newblock \href {https://doi.org/10.1007/978-3-319-96649-6_8}
  {\path{doi:10.1007/978-3-319-96649-6_8}}.

\bibitem{tezzele2018model}
M.~Tezzele, N.~Demo, M.~Gadalla, A.~Mola, and G.~Rozza.
\newblock Model order reduction by means of active subspaces and dynamic mode
  decomposition for parametric hull shape design hydrodynamics.
\newblock In {\em Technology and Science for the Ships of the Future:
  Proceedings of NAV 2018: 19th International Conference on Ship \& Maritime
  Research}, pages 569--576. IOS Press, 2018.
\newblock \href {https://doi.org/10.3233/978-1-61499-870-9-569}
  {\path{doi:10.3233/978-1-61499-870-9-569}}.

\bibitem{tezzele2018ecmi}
M.~Tezzele, N.~Demo, A.~Mola, and G.~Rozza.
\newblock {An integrated data-driven computational pipeline with model order
  reduction for industrial and applied mathematics}.
\newblock In M.~G{\"u}nther and W.~Schilders, editors, {\em {Novel Mathematics
  Inspired by Industrial Challenges}}, number~38 in Mathematics in Industry.
  Springer International Publishing, 2022.
\newblock \href {https://doi.org/10.1007/978-3-030-96173-2_7}
  {\path{doi:10.1007/978-3-030-96173-2_7}}.

\bibitem{tezzele2019marine}
M.~Tezzele, N.~Demo, and G.~Rozza.
\newblock Shape optimization through proper orthogonal decomposition with
  interpolation and dynamic mode decomposition enhanced by active subspaces.
\newblock In R.~Bensow and J.~Ringsberg, editors, {\em Proceedings of MARINE
  2019: VIII International Conference on Computational Methods in Marine
  Engineering}, pages 122--133, 2019.

\bibitem{tezzele2020enhancing}
M.~Tezzele, N.~Demo, G.~Stabile, A.~Mola, and G.~Rozza.
\newblock {Enhancing CFD predictions in shape design problems by model and
  parameter space reduction}.
\newblock {\em Advanced Modeling and Simulation in Engineering Sciences},
  7(40), 2020.
\newblock \href {https://doi.org/10.1186/s40323-020-00177-y}
  {\path{doi:10.1186/s40323-020-00177-y}}.

\bibitem{tezzele2021multi}
M.~Tezzele, L.~Fabris, M.~Sidari, M.~Sicchiero, and G.~Rozza.
\newblock {A multi-fidelity approach coupling parameter space reduction and
  non-intrusive POD with application to structural optimization of passenger
  ship hulls}.
\newblock {\em arXiv preprint arXiv:2206.01243}, Submitted, 2022.

\bibitem{tezzele2018dimension}
M.~Tezzele, F.~Salmoiraghi, A.~Mola, and G.~Rozza.
\newblock Dimension reduction in heterogeneous parametric spaces with
  application to naval engineering shape design problems.
\newblock {\em Advanced Modeling and Simulation in Engineering Sciences},
  5(1):25, Sep 2018.
\newblock \href {https://doi.org/10.1186/s40323-018-0118-3}
  {\path{doi:10.1186/s40323-018-0118-3}}.

\bibitem{tripathy2019deep}
R.~Tripathy and I.~Bilionis.
\newblock Deep active subspaces: A scalable method for high-dimensional
  uncertainty propagation.
\newblock In {\em ASME 2019 International Design Engineering Technical
  Conferences and Computers and Information in Engineering Conference}.
  American Society of Mechanical Engineers Digital Collection, 2019.

\bibitem{tripathy2016gaussian}
R.~Tripathy, I.~Bilionis, and M.~Gonzalez.
\newblock Gaussian processes with built-in dimensionality reduction:
  Applications to high-dimensional uncertainty propagation.
\newblock {\em Journal of Computational Physics}, 321:191--223, 2016.
\newblock \href {https://doi.org/10.1016/j.jcp.2016.05.039}
  {\path{doi:10.1016/j.jcp.2016.05.039}}.

\bibitem{2020SciPy-NMeth}
P.~{Virtanen}, R.~{Gommers}, T.~E. {Oliphant}, M.~{Haberland}, T.~{Reddy},
  D.~{Cournapeau}, E.~{Burovski}, P.~{Peterson}, W.~{Weckesser}, J.~{Bright},
  S.~J. {van der Walt}, M.~{Brett}, J.~{Wilson}, K.~{Jarrod Millman},
  N.~{Mayorov}, A.~R.~J. {Nelson}, E.~{Jones}, R.~{Kern}, E.~{Larson},
  C.~{Carey}, {\.I}.~{Polat}, Y.~{Feng}, E.~W. {Moore}, J.~{Vand erPlas},
  D.~{Laxalde}, J.~{Perktold}, R.~{Cimrman}, I.~{Henriksen}, E.~A. {Quintero},
  C.~R. {Harris}, A.~M. {Archibald}, A.~H. {Ribeiro}, F.~{Pedregosa}, P.~{van
  Mulbregt}, and S.~.~. {Contributors}.
\newblock {SciPy 1.0: Fundamental Algorithms for Scientific Computing in
  Python}.
\newblock {\em Nature Methods}, 2020.
\newblock \href {https://doi.org/https://doi.org/10.1038/s41592-019-0686-2}
  {\path{doi:https://doi.org/10.1038/s41592-019-0686-2}}.

\bibitem{vohra2019active}
M.~Vohra, A.~Alexanderian, H.~Guy, and S.~Mahadevan.
\newblock Active subspace-based dimension reduction for chemical kinetics
  applications with epistemic uncertainty.
\newblock {\em Combustion and Flame}, 204:152--161, 2019.
\newblock \href {https://doi.org/10.1016/j.combustflame.2019.03.006}
  {\path{doi:10.1016/j.combustflame.2019.03.006}}.

\bibitem{warner1983foundations}
F.~W. Warner.
\newblock {\em Foundations of differentiable manifolds and Lie groups},
  volume~94.
\newblock Springer Science \& Business Media, 1983.

\bibitem{weller1998}
H.~G. Weller, G.~Tabor, H.~Jasak, and C.~Fureby.
\newblock A tensorial approach to computational continuum mechanics using
  object-oriented techniques.
\newblock {\em Computers in physics}, 12(6):620--631, 1998.
\newblock \href {https://doi.org/10.1063/1.168744}
  {\path{doi:10.1063/1.168744}}.

\bibitem{williams2006gaussian}
C.~K. Williams and C.~E. Rasmussen.
\newblock {\em {Gaussian Processes for Machine Learning}}.
\newblock Adaptive Computation and Machine Learning series. MIT press
  Cambridge, MA, 2006.

\bibitem{wu2009localized}
Q.~Wu, S.~Mukherjee, and F.~Liang.
\newblock Localized sliced inverse regression.
\newblock In {\em Advances in neural information processing systems}, pages
  1785--1792, 2009.

\bibitem{zahm2020gradient}
O.~Zahm, P.~G. Constantine, C.~Prieur, and Y.~M. Marzouk.
\newblock Gradient-based dimension reduction of multivariate vector-valued
  functions.
\newblock {\em SIAM Journal on Scientific Computing}, 42(1):A534--A558, 2020.
\newblock \href {https://doi.org/10.1137/18M1221837}
  {\path{doi:10.1137/18M1221837}}.

\bibitem{zahr2016adjoint}
M.~J. Zahr and P.-O. Persson.
\newblock An adjoint method for a high-order discretization of deforming domain
  conservation laws for optimization of flow problems.
\newblock {\em Journal of Computational Physics}, 326:516--543, 2016.

\bibitem{zhang2019learning}
G.~Zhang, J.~Zhang, and J.~Hinkle.
\newblock Learning nonlinear level sets for dimensionality reduction in
  function approximation.
\newblock In {\em Advances in Neural Information Processing Systems}, pages
  13199--13208, 2019.

\end{thebibliography}

\end{document}